\documentclass{article}

\usepackage{slantsc}
\usepackage[utf8]{inputenc}
\usepackage[T1]{fontenc}
\usepackage[margin=1.6in]{geometry}
\usepackage{titlesec} % titles modifications 
\usepackage{needspace}
\usepackage{float}
\usepackage{enumerate}
\usepackage{authblk} % affiliations
\usepackage{microtype}
\usepackage{environ}

\usepackage{amsthm, amsmath, amssymb}
\usepackage[full, bold]{complexity}
\usepackage{mathtools}
\usepackage[ruled, linesnumbered]{algorithm2e}
\usepackage{thmtools}
\usepackage{thm-restate}
\usepackage{hyperref}
\usepackage[sort]{cleveref}
\usepackage{graphicx}
\usepackage{tabularx}
\usepackage{eucal}
\usepackage{lineno}
\usepackage{xcolor}

\def\cqedsymbol{\ifmmode$\lrcorner$\else{\unskip\nobreak\hfil
\penalty50\hskip1em\null\nobreak\hfil$\lrcorner$
\parfillskip=0pt\finalhyphendemerits=0\endgraf}\fi}

\renewenvironment{abstract}
{\small\vspace{-1em}
\begin{center}
\bfseries\abstractname\vspace{-.5em}\vspace{0pt}
\end{center}
\list{}{
\setlength{\leftmargin}{0.6in}%
\setlength{\rightmargin}{\leftmargin}}%
\item\relax}
{\endlist}

\declaretheorem[parent=section, name=Theorem, style=plain]{theorem}
\declaretheorem[sibling=theorem, name=Lemma, style=plain]{lemma}
\declaretheorem[sibling=theorem, name=Corollary, style=plain]{corollary}
\declaretheorem[sibling=theorem, name=Proposition, style=plain]{proposition}
\declaretheorem[sibling=theorem, name=Question, style=plain]{question}

\theoremstyle{remark} % Apply a distinct style (similar to proof)

\newcommand{\N}{\ensuremath{\mathcal{N}}}

\renewcommand{\H}{\mathcal{H}}
\renewcommand{\G}{\mathcal{G}}

\newcommand{\T}{\ensuremath{\mathcal{T}}}
\renewcommand{\E}{\ensuremath{\mathcal{E}}}

\newcommand{\algoA}{\ensuremath{\mathtt{A}}}
\newcommand{\algoB}{\ensuremath{\mathtt{B}}}

\newcommand{\minred}{\operatorname{\mathsf{MinRed}}}
\newcommand{\maxirr}{\operatorname{\mathsf{MaxIrr}}}

\newcommand{\priv}{\operatorname{\mathsf{priv}}}

\newcommand{\inc}{\operatorname{\mathsf{inc}}}
\newcommand{\parent}{\operatorname{\mathsf{parent}}}
\newcommand{\children}{\operatorname{\mathsf{children}}}

\newcommand{\red}{\operatorname{\mathsf{red}}}

\newcommand{\problemdot}{\(\cdot\allowbreak{}\)}

\newcommand{\graphmirrenum}{\textsc{Graph\allowbreak{}Mirr}\problemdot\textsc{Enum}}
\newcommand{\hypmirrenum}{\textsc{HypMirr}\problemdot\textsc{Enum}}
\newcommand{\graphmredenum}{\textsc{GraphMred}\problemdot\textsc{Enum}}
\newcommand{\hypmredenum}{\textsc{HypMred}\problemdot\textsc{Enum}}
\newclass{\TotalP}{TotalP}
\newclass{\DelayP}{DelayP}
\newclass{\IncP}{IncP}
\newclass{\TotalQP}{TotalQP}
\newclass{\DelayQP}{DelayQP}
\newclass{\IncQP}{IncQP}
\newclass{\TotalXP}{TotalXP}
\newclass{\IncXP}{IncXP}
\newclass{\DelayXP}{DelayXP}
\newclass{\TotalFPT}{TotalFPT}
\newclass{\IncFPT}{IncFPT}
\newclass{\DelayFPT}{DelayFPT}

\makeatletter
\newcommand{\succprec}{\mathrel{\mathpalette\succ@prec{\succ\prec}}}
\newcommand{\precsucc}{\mathrel{\mathpalette\succ@prec{\prec\succ}}}

\newcommand{\succ@prec}[2]{\succ@@prec#1#2}
\newcommand{\succ@@prec}[3]{%
  \vcenter{\m@th\offinterlineskip
    \sbox\z@{$#1#3$}%
    \hbox{$#1#2$}\kern-0.4\ht\z@\box\z@
  }%
}
\makeatother

\makeatletter
\newcommand{\problemtitle}[1]{\gdef\@problemtitle{#1}}% 
\newcommand{\problemparameter}[1]{\gdef\@problemparameter{#1}}% Store problem title
\newcommand{\probleminput}[1]{\gdef\@probleminput{#1}}% Store problem input
\newcommand{\problemquestion}[1]{\gdef\@problemquestion{#1}}% Store problem question
\NewEnviron{enumproblem}{
	\problemtitle{}\problemparameter{}\probleminput{}\problemquestion{}% Default input is empty
	\BODY% Parse input
	\par\addvspace{.5\baselineskip}
	\noindent
	\begin{tabularx}{\textwidth}{@{\hspace{\parindent}} l X c}
		\multicolumn{2}{@{\hspace{\parindent}}l}{\@problemtitle} \\% Title
		\textbf{Input:} & \@probleminput \\% Input
		\textbf{Output:} & \@problemquestion% Output
	\end{tabularx}
	\par\addvspace{.5\baselineskip}
}

\NewEnviron{decproblem}{
	\problemtitle{}\problemparameter{}\probleminput{}\problemquestion{}% Default input is empty
	\BODY% Parse input
	\par\addvspace{.5\baselineskip}
	\noindent
	\begin{tabularx}{\textwidth}{@{\hspace{\parindent}} l X c}
		\multicolumn{2}{@{\hspace{\parindent}}l}{\@problemtitle} \\% Title
		\textbf{Input:} & \@probleminput \\% Input
		\textbf{Question:} & \@problemquestion% Output
	\end{tabularx}
	\par\addvspace{.5\baselineskip}
}

\title{Generating minimal redundant and maximal irredundant sets in incidence graphs%
\thanks{The first, third, and last authors have been supported by the ANR project PARADUAL (ANR-24-CE48-0610-01).}
}

\author[1]{Emanuel Castelo}
\author[1]{Jérémie Chalopin}
\author[1]{Oscar Defrain}
\author[1]{Simon Vilmin}

\affil[1]{Aix-Marseille Université, CNRS, LIS, Marseille, France.}
\newif\iflongversion
\longversiontrue
\newcommand{\iflongelse}[2]{\iflongversion{#1}\else{#2}\fi}

\makeatletter
\renewcommand\subparagraph{%
  \@startsection{subparagraph}{5}{\z@}%
    {3.25ex \@plus1ex \@minus.2ex}%  beforeskip (same as paragraph)
    {-1em}%                         afterskip (run-in, same as paragraph)
    {\normalfont\normalsize\bfseries}% style (same as paragraph)
}
\makeatother

\begin{document}

\maketitle

\vspace{-.3cm}

\begin{abstract}
It has been proved by Boros and Makino that there is no output-polynomial-time algorithm enumerating the minimal redundant sets or the maximal irredundant sets of a hypergraph, unless \(\P = \NP\).
The same question was left open for graphs, with only a few tractable cases known to date.
In this paper, we focus on graph classes that capture incidence relations such as bipartite, co-bipartite, and split graphs.
Concerning maximal irredundant sets, we show that the problem on co-bipartite graphs is as hard as in general graphs and tractable in split and strongly orderable graphs, the latter being a generalization of chordal bipartite graphs.
As formeta minimal redundant sets enumeration, we first show that the problem is intractable in split and co-bipartite graphs, answering the aforementioned open question, and that it is tractable on $(C_3,C_5,C_6,C_8)$-free graphs, a class of graphs incomparable to strongly orderable graphs, and which also generalizes chordal bipartite graphs.

\vspace{0.5em}
\noindent \textbf{Keywords:} enumeration algorithms, maximal irredundant sets, minimal redundant sets, incidence graphs
\end{abstract}

\section{Introduction}

\emph{Irredundant sets} in hypergraphs are those sets of vertices $I$ for which every element $x\in I$ has a private edge, i.e., an edge whose intersection with \(I\) is exactly \(\{x\}\).
Equivalently, they are often presented as sets of vertices that are minimal with respect to the set of hyperedges they intersect.
Sets of vertices that are not irredundant are called \emph{redundant}.

In graphs, \emph{irredundant sets} are defined as the sets of vertices having private neighbors, that is, neighbors that are not adjacent to other vertices in the set, allowing a vertex to be its own private neighbor.
\emph{Redundant sets} are defined analogously as the subsets of vertices that are not irredundant.
Redundant and irredundant sets of a graph should not be confused with the redundant and irredundant sets of the underlying hypergraph where each edge defines a hyperedge.
Rather, they correspond to the redundant and irredundant sets of the hypergraph of closed-neighborhoods of the graph, in an analogue of what is domination to transversality; we refer to Section~\ref{section:preliminaries} for a more in-depth presentation of these notions together with some examples.

Irredundancy in graphs was first introduced by Cockayne, Hedetniemi, and Miller in~\cite{cockayne1978properties} for its links with independence and domination, where it was in particular noted that every maximal independent set is a minimal dominating set, which in turn is a maximal irredundant set.
Due to its relation to independence and domination, irredundancy in graphs has been extensively studied~\cite{bollobas1979graph,cockayne1981contributions,bollobas1984irredundance,favaron1986stability,bertossi1988total,golumbic1993irredundancy,jacobson1990chordal}.
Let us mention that computing the minimum or maximum size of a maximal irredundant set is known to be \NP-complete~\cite{laskar1983domination,hedetniemi1985irredundance,fellows1994private}.

The generalization of irredundancy to hypergraphs has been considered for algorithmic purposes in the contexts of minimal domination or transversality \cite{kante2015edge,bartier2024hypergraph}, for it is known that minimal dominating sets (resp.~minimal tranversals) are those dominating sets (resp.~tranversals) which are irredundant.

Despite the interest for irredundant sets, little is known regarding their enumeration and that of their dual, redundant sets. Here the goal is, given a graph or a hypergraph, to list without repetition all its maximal irredundant sets or minimal redundant sets. 
In the following, let us denote by \graphmirrenum{}, \graphmredenum{}, \hypmirrenum{}, and \hypmredenum{} these problems (see Section~\ref{section:preliminaries} for the formal definition). 
Since the number of minimal redundant or maximal irredundant sets can be exponential in the size of the input (hyper)graph, evaluating the complexity of an enumeration algorithm solving either of these problems using the input size only is not a relevant efficiency measure.
Instead algorithms are analyzed using \emph{output-sensitive complexity} that estimates the running time of an algorithm in terms of both its input and output sizes.
An algorithm runs in \emph{output-polynomial time} if its execution time is polynomially bounded by the combined sizes of the input and the output.
This notion can be further detailed to analyze the delay between two solutions. 
Namely, we say that an algorithm runs in \emph{incremental-polynomial time} if it outputs the $i$-th solution after a time being polynomial in the input size plus $i$. 
If the delay between before the first output, between two consecutive outputs, and after the last output is polynomial in the input size only, then the algorithm is said to run with \emph{polynomial delay}.
We redirect the reader to \cite{strozecki2019enumeration} for further details on enumeration complexity.

We now review the state of the art concerning the four above-mentioned problems.
%and refer to Section~\ref{section:preliminaries} for their definition and an overview of enumeration complexity.
The complexity status of \hypmirrenum{} and \hypmredenum{} was first stated as an open problem by Uno during the Lorentz Workshop on ``Enumeration Algorithms using Structure'' held in 2015 \cite{bodlaender2015open}.
Shortly after, Boros and Makino announced the following as a negative answer for both problems \cite{boros2016wepa,boros2024generating}, where the \emph{dimension} of a hypergraph is the maximum size of its edges.

\begin{theorem}[{Reformulation of \cite[Theorems~1 \& 2]{boros2024generating}}]\label{thm:boros-hardness}
    There is no output-polynomial-time algorithm for \hypmirrenum{} and \hypmredenum{} unless $\P\neq \NP$, even when restricted to hypergraphs of dimension at most three.
\end{theorem}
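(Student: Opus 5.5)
The plan is the standard route for hardness of enumeration (here ``unless $\P\neq\NP$'' should be read as ``unless $\P=\NP$''). First, reduce to the decision problem of \emph{finding an additional solution}. Given a hypergraph $\H$ and a family $\mathcal{S}$ of maximal irredundant (resp.\ minimal redundant) sets of $\H$, we ask whether $\H$ has a solution outside $\mathcal{S}$. Suppose an algorithm enumerates in time $p(|\H|+\text{output})$, and suppose we can arrange that $|\mathcal{S}|$ is polynomial in $|\H|$. We run it for $p(|\H|+\|\mathcal{S}\|+n)$ steps, where $n$ bounds the size of one further solution. If it stops having produced only $\mathcal{S}$, the answer is no. If it produces a set not in $\mathcal{S}$, or has not halted, the answer is yes. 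Hence it suffices to show that this decision problem is \NP-hard on hypergraphs of dimension at most three, for a family $\mathcal{S}$ that can be computed in polynomial time.

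For that hardness step I will reduce from 3-SAT, with one construction tailored to each problem. Take a formula with variables $x_1,\dots,x_n$ and clauses $C_1,\dots,C_m$. The vertex set consists of one vertex per literal, one vertex $c_j$ per clause, and a few auxiliary vertices. The hyperedges, all of size at most three, are of the following kinds:
\begin{itemize}
\item complementary edges $\{x_i,\bar x_i\}$ together with private edges $\{x_i,a_i\}$ and $\{\bar x_i,a_i\}$ through an auxiliary vertex $a_i$, so that an irredundant set can keep private edges for at most one literal per variable;
\item clause edges $\{c_j,\ell\}$ for each $\ell\in C_j$, which tie the private edge of $c_j$ to the chosen literals.
\end{itemize}
The intended correspondence is as follows: a set containing one literal per variable and all clause vertices is irredundant and maximal exactly when the chosen literals form a satisfying assignment, or the complement of one, depending on how the privacy of $c_j$ is arranged.

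For \hypmredenum{} I will use the characterization that $R$ is minimal redundant iff some $x\in R$ has no private edge with respect to $R$ while $R\setminus\{y\}$ is irredundant for every $y$. I will then build a dual gadget in which a distinguished vertex $z$ loses its private edge only when the set meets every clause, and minimality forces exactly one literal per variable.

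The main obstacle is the second half of the hardness step: listing, in polynomial time, the family $\mathcal{S}$ of all ``parasitic'' solutions that do not encode assignments. These include small solutions using the auxiliary vertices $a_i$, solutions taking both $x_i$ and $\bar x_i$, and solutions omitting some clause vertices. The further difficulty is that their number must stay polynomial. My first attempt will be to attach a pendant private edge $\{v\}$, or a size-two edge to a fresh vertex, to most vertices. This pins down which vertices can appear together, so that every solution outside the assignment-encoding ones has constant size or a rigid form, giving $O(n^3)$ parasitic solutions that can be enumerated by brute force. I will then verify the two directions: a satisfying assignment yields a solution not in $\mathcal{S}$, and conversely any solution not in $\mathcal{S}$ reads off a satisfying assignment.
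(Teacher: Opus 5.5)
The paper does not prove this statement itself. It imports Theorems~1 and~2 of Boros and Makino, which are stated there for (ir)redundant subfamilies of hyperedges, and translates them by transposition, so that their degree bound of three becomes the dimension bound here. You are right that ``unless $\P\neq\NP$'' should read ``unless $\P=\NP$''. Your reduction from output-polynomial enumeration to the additional-solution problem is standard and correctly set up. However, the whole content of the cited theorem lies in the step you leave open: a dimension-three instance whose solutions that do not encode satisfying assignments are polynomially many and listable. The gadget you write down cannot supply this.

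Take your edges $\{x_i,\bar x_i\}$, $\{x_i,a_i\}$, $\{\bar x_i,a_i\}$, $\{c_j,\ell\}$. Both literals of a variable keep private edges through $a_i$ whenever $a_i$ is absent, so nothing limits a set to one literal per variable. Your intended set is also never maximal: $a_i$ can always be added, since it is private in the edge through the unchosen literal, the chosen literal keeps $\{x_i,\bar x_i\}$, and no clause edge is touched. Once the $a_i$ are in, one checks that for every choice $L$ of one literal per variable, the set $L\cup\{a_1,\dots,a_n\}\cup\{c_j : C_j\not\subseteq L\}$ is a maximal irredundant set. Indeed, a clause vertex with $C_j\subseteq L$ has no private edge. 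The complement of a literal of $L$ has none either, since each of its edges meets $L$, $a_i$, or some $c_j$ with $C_j\not\subseteq L$. These $2^n$ sets are distinct. Only those containing every $c_j$ correspond to satisfying assignments (setting the literals outside $L$ true). So for unsatisfiable formulas the parasitic family is exponential, and the additional-solution argument cannot even start.

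Your suggested remedy goes the wrong way. A singleton edge $\{v\}$ gives $v$ a private edge in every set containing it. An edge $\{v,f\}$ does the same whenever $f$ is absent, and it adds a new vertex $f$ that solutions may use. Both relax rather than restrict which vertices can coexist. For \hypmredenum{} no construction is given at all. Dimension three also rules out edges $\{z\}\cup C_j$, so auxiliary vertices, with their own parasitic solutions, are unavoidable. And minimality alone does not force exactly one literal per variable. As it stands, this is a plan whose decisive step is missing, not a proof.
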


Let us point that the original formulations in \cite{bodlaender2015open,boros2016wepa,boros2024generating} are stated in the context of redundant and irredundant subhypergraphs,\footnote{A set of hyperedges (or subhypergraph) is called \emph{irredundant} if each of its hyperedge contains a private vertex, i.e., a vertex that is only contained in this hyperedge within the set; it is called \emph{redundant} otherwise.} 
which is equivalent to ours up to considering the transposed\footnote{%
The \emph{transposed hypergraph} is obtained by considering each edge as a vertex, and each vertex as an edge, while keeping their mutual incidences. In terms of its incidence bipartite graph, it amounts to ``swap'' its two sides. In particular, the dimension of the hypergraph becomes the degree of its transposed hypergraph, and its degree becomes its dimension.%
} hypergraph. 
% In particular, the original statement is stated for the maximum \emph{degree}, i.e., the maximum number of edges a vertex intersects.
% Actually, in our context, Theorem~\ref{thm:boros-hardness} even holds for hypergraphs of dimension at most three, where the \emph{dimension} is the maximum size of an edge.
The above statement is thus a reformulation to our context of vertex subsets, as will be the following summary of their result.
In addition to Theorem~\ref{thm:boros-hardness}, the authors in \cite{boros2016wepa,boros2024generating} show that \hypmredenum{} and \hypmirrenum{} respectively admit polynomial and output-polynomial times algorithms in hypergraphs of bounded \emph{degree}\, which is the number of edges a vertex intersects.
When restricted to hypergraphs of dimension at most two, they show that \hypmredenum{} can be solved in polynomial time, while \hypmirrenum{} is equivalent to hypergraph dualization, arguably one of the most important open problem in enumeration for which no better than incremental-quasi-polynomial time is known~\cite{eiter1995identifying,fredman1996complexity,eiter2008computational}.
Let us stress again the fact that the results for hypergraphs of dimension at most two do not apply to \graphmredenum{} nor \graphmirrenum{}, as (ir)redundancy in graphs correspond to (ir)redundancy in the hypergraph of closed-neighborhoods of these graphs, for which the dimension is not bounded.

Later in~\cite{conte2019maximal}, the authors study \hypmirrenum{} under the prism of parameterized complexity and in particular derive a polynomial-delay algorithm on hypergraphs (hence graphs) of bounded degeneracy.

In \cite{blind2021locally}, it is shown that \graphmirrenum{} admits an algorithm running with linear delay after polynomial-time preprocessing and polynomial space when restricted to circular-permutation and permutation graphs.

Finally, let us briefly mention that irredundant sets enumeration has triggered some interest in the context of input-sensitive enumeration \cite{binkele2011breaking,golovach2019enumerationchordal,golovach2019enumerationclaw}, where the goal is to devise exponential time algorithms minimizing the base of the exponent.

In this work, we continue the research direction that was initiated in~\cite{blind2021locally}, as a restriction of the original question asked in \cite{bodlaender2015open}, and wonder whether \graphmredenum{} and \graphmirrenum{} are tractable.
Namely, we wish to answer the following question.

\begin{question}\label{qu:output-poly-for-graphs}
    Can \graphmredenum{} and \graphmirrenum{} be solved in output-polynomial time on general graphs?
\end{question}

We note that the same question is posed as an open direction in \cite{conte2019maximal}.

In the light of \Cref{thm:boros-hardness}, and driven by the intuition that the restrictions of \hypmredenum{} and \hypmirrenum{} to graphs should  still be intractable,\footnote{In what would be an analogue of transversals to dominating sets \cite{kante2014enumeration}.} we address \Cref{qu:output-poly-for-graphs} focusing on graph classes that capture incidence relations of hypergraphs such as bipartite, co-bipartite, and split graphs; we refer to Section~\ref{section:preliminaries} for their relation to hypergraphs.

% 
% We end this section with a summary of the results obtained in this paper, together with a brief description of the methods employed.
% As before, the reader is referred to~\Cref{section:preliminaries} for a precise definition of the terminology.
% 

First, we focus on maximal irredundant sets. 
\iflongelse{%
    On split graphs, it was already claimed by Uno in \cite{bodlaender2015open} that maximal irredundant sets are in bijection with minimal dominating sets.
    We include a proof of this statement for completeness.
    We thus derive the following using the algorithm from \cite{kante2014enumeration} for minimal dominating sets enumeration in this graph class.%
}
{
    On split graphs, it was already claimed by Uno in \cite{bodlaender2015open} that maximal irredundant sets are in bijection with minimal dominating sets, from which we derive the following using the algorithm in \cite{kante2014enumeration}; a proof of this statement is included in appendix.%
}

\begin{restatable}[{\cite{bodlaender2015open}}]{theorem}{thmMIRRsplit}
    \label{thm:MIRR:split}
    There is a bijection between maximal irredundant sets and minimal dominating sets in split graphs. Consequently, \graphmirrenum{} can be solved with linear delay and space on this class.
\end{restatable}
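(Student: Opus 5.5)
The plan is to prove that in a split graph $G=(C,S)$, with $C$ a clique and $S$ an independent set, the maximal irredundant sets are \emph{exactly} the minimal dominating sets, so that the bijection of Theorem~\ref{thm:MIRR:split} is the identity. Two facts hold in every graph. First, a minimal dominating set is a dominating irredundant set. Second, every minimal dominating set is a maximal irredundant set: if $D$ is minimal dominating and $v\notin D$, then $v$ is dominated, so $N[v]$ meets $D$ and destroys some private neighbour. To be careful, $v$ only destroys those private neighbours of $x\in D$ that lie in $N[v]$. The standard argument of Cockayne, Hedetniemi and Miller~\cite{cockayne1978properties} handles this point, and I will cite it. It therefore suffices to show that, in split graphs, every maximal irredundant set $I$ is dominating. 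Being dominating and irredundant, $I$ is then a minimal dominating set, and the identity map is the required bijection.

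To show that a maximal irredundant $I$ is dominating, I argue by contradiction. Suppose $v$ is not dominated by $I$, and show that $I\cup\{v\}$ is still irredundant, contradicting maximality. The new vertex $v$ is its own private neighbour, since $N[v]\cap I=\emptyset$. It then suffices to check that every $x\in I$ keeps a private neighbour outside $N[v]$. I split into cases on where $v$ and the elements of $I$ lie.

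\textbf{Case $I\cap C=\emptyset$.} Then $I\subseteq S$ is independent, so each $x\in I$ is its own private neighbour. Since $x$ is not adjacent to $v$, it stays private after adding $v$.

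\textbf{Case $I\cap C\neq\emptyset$.} Then $C$ is dominated, so $v\in S$ and $N(v)\subseteq C\setminus I$. I then split on $|I\cap C|$.
\begin{itemize}
\item If $|I\cap C|\ge 2$, every private neighbour of every $x\in I$ lies in $S$. A clique vertex is adjacent to both clique elements of $I$, and a clique vertex of $I$ cannot be its own private neighbour. Since $N[v]\cap S=\{v\}$ and $v$ was not a private neighbour of anyone, nothing is lost.
\item If $I\cap C=\{x\}$, the delicate elements are $x$ itself and the elements $y\in I\cap S$ whose private neighbours are clique vertices lying in $N(v)$.
\end{itemize}

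The last subcase is the main obstacle. Here one cannot simply add $v$, and the first attempt would be an exchange argument. If all private neighbours of some element are swallowed by $N(v)$, I would try to show that $I$ was not maximal in the first place, by exhibiting another vertex that can be added. The candidates are the clique vertex $p\in N(v)$ serving as private neighbour, or $v$ together with a local modification. I expect to use that $x\in C\setminus N[v]$ is always its own private neighbour unless some $y\in I\cap S$ is adjacent to $x$.

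If this case turns out to be genuinely non-dominating, I would instead fall back on a non-identity bijection. This would map such an $I$ to $I\cup U$, where $U$ is the set of undominated vertices of $S$, and I would verify minimality and injectivity by the same case analysis.

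Finally, given the bijection, which is computable in linear time (it is the identity under the primary plan), \graphmirrenum{} on split graphs reduces to enumerating minimal dominating sets of split graphs. Kanté et al.~\cite{kante2014enumeration} solve that problem with linear delay and space, which yields the second claim of Theorem~\ref{thm:MIRR:split}.
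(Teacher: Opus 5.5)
Your plan is the paper's plan: the bijection is the identity because every maximal irredundant set of a split graph is dominating. Your cases $I\cap C=\emptyset$ and $|I\cap C|\ge 2$ are correct. But the proof is not complete. The subcase $I\cap C=\{x\}$, which you call the main obstacle, is left open, and you even hedge on whether the claim holds there.

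The missing observation is that no element $y\in I\cap S$ can be adjacent to an element $x\in I\cap C$. Otherwise $N[y]\subseteq C\cup\{y\}\subseteq N[x]$, so every vertex of $N[y]$ sees $x\neq y$, and $y$ has no private neighbour. Equivalently, once $I\cap C\neq\emptyset$, no clique vertex can be a private neighbour of an element of $I\cap S$, so the ``delicate'' elements you describe do not exist. Hence every $y\in I\cap S$ is self-private. When $I\cap C=\{x\}$, the vertex $x$ is self-private too, since $N[x]\cap I=\{x\}$. An undominated $v\in S$ is nonadjacent to all of $I$, so all these self-private witnesses survive in $I\cup\{v\}$, and $v$ is self-private as well. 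Thus $I\cup\{v\}$ is irredundant, contradicting maximality, and no exchange argument is needed.

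This is exactly the fact the paper records first: $I\cap N(y)=\emptyset$ for $y\in I\cap S$. The paper then adds an undominated vertex of $S$ directly. For an undominated clique vertex, it first normalizes the partition so every clique vertex has a neighbour in $S$, then passes to an undominated $S$-neighbour. Your case $I\cap C=\emptyset$ instead adds the clique vertex directly, which avoids that normalization.

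Your fallback bijection $I\mapsto I\cup U$ could not have worked. Irredundance is hereditary, so every proper superset of a maximal irredundant set is redundant. In particular $I\cup U$ is never a minimal dominating set when $U\neq\emptyset$.

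Finally, your stated reason for the general fact you cite is off. A minimal dominating set $D$ is maximal irredundant not because an added vertex $v$ ``destroys'' a private neighbour of some $x\in D$. Rather, $v$ itself has no private neighbour in $D\cup\{v\}$, since every vertex of $N[v]$ is already dominated by $D$.
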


% 
% The fact that minimal dominating sets are maximal irredundant sets (see~\Cref{section:preliminaries}) is a well-known result.
% Additionally, we prove that when restricted to split graphs the maximal irredundant sets are dominating, and thus concluding the bijection between both combinatorial structures and allowing us to derive~\Cref{thm:MIRR:split} by using the algorithm from~\cite{kante2014enumeration}.
% 

As for co-bipartite graphs, we obtain the following negative result, which shows the problem to be challenging in this class.

\begin{restatable}{theorem}{thmMIRRcobip}
    \label{thm:MIRR:cobip}
    There is an output-polynomial-time algorithm solving \graphmirrenum{} on general graphs if and only if there is one solving the same problem on co-bipartite graphs.
\end{restatable}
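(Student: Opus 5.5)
The direction from general graphs to co-bipartite graphs is trivial, since co-bipartite graphs are graphs. For the converse, I would reduce \graphmirrenum{} on an arbitrary graph $G=(V,E)$ to the same problem on a co-bipartite graph $H$. Take two disjoint copies $V$ and $V'=\{v' : v\in V\}$ of the vertex set, each turned into a clique. Add the edge $uv'$ whenever $v\in N_G[u]$; this relation is symmetric because closed neighborhoods are. Then for $u\in V$ we have $N_H[u]\cap V' = (N_G[u])'$, and symmetrically $N_H[v']\cap V = N_G[v]$. Note $H$ has size polynomial in $G$.

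The key step is to describe the irredundant sets of $H$. Let $S$ be irredundant in $H$ and $x\in S\cap V$. If $|S\cap V|\ge 2$, any private neighbor of $x$ must lie in $V'$, because the other elements of $S\cap V$ dominate the clique $V$.
\begin{itemize}
\item If moreover $S\cap V'\neq\emptyset$, that private neighbor must be the unique element $a'$ of $S\cap V'$. The reason is that any other vertex of the clique $V'$ is adjacent to $a'$.
\item Consequently, if $|S\cap V|\ge 2$ and $S\cap V'\ne\emptyset$, all elements of $S\cap V$ need the same private neighbor $a'$. This is impossible.
\item By symmetry, any irredundant set meeting both sides has at most one vertex on each side, so it has size at most $2$.
\item Finally, for $S\subseteq V$ with $|S|\ge 2$, $x$ has a private neighbor $v'$ in $H$ iff $v\in N_G[x]\setminus N_G[S\setminus\{x\}]$. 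So $S$ is irredundant in $H$ iff it is irredundant in $G$, and symmetrically for $S\subseteq V'$.
\end{itemize}

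Next I would transfer maximality. Let $S\subseteq V$ with $|S|\ge 3$. Then $S+y$ is irredundant in $H$ only if $y\in V$, since mixed irredundant sets have size $\le 2$. Hence $S$ is maximal irredundant in $H$ iff $S$ is maximal irredundant in $G$, and likewise on the $V'$ side. The maximal irredundant sets of $H$ are therefore:
\begin{itemize}
\item two copies of the maximal irredundant sets of $G$ of size at least $3$;
\item a family of sets of size at most $2$, of polynomial cardinality.
\end{itemize}

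The algorithm for $G$ then proceeds as follows. Build $H$ and run the assumed algorithm on it. Output every produced set that lies in $V$ and has size $\ge 3$, which avoids duplicates from the $V'$ copy. Separately, brute-force all subsets of $V$ of size at most $2$ and output those that are maximal irredundant in $G$; maximality of such a set is testable in polynomial time by trying each extension by one vertex. Since $|\mathcal{O}(H)|\le 2|\mathcal{O}(G)|+O(|V|^2)$ and $|H|$ is polynomial in $|G|$, the running time is polynomial in $|G|$ and the output size of $G$.

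The main obstacle is the case analysis of mixed sets and of small sets near the size thresholds. For instance, a size-$2$ maximal irredundant set of $G$ might extend in $H$ by a vertex of $V'$. This is exactly why I separate out sets of size $\le 2$ and handle them by brute force rather than through $H$.
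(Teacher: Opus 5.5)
Your proposal is correct and follows the paper's proof essentially verbatim. Your $H$ is exactly the incidence co-bipartite graph $C(\N(G))$, and you obtain the same decomposition of $\maxirr(H)$ (polynomially many sets of size at most $2$, plus two copies of the members of $\maxirr(G)$ of size at least $3$), with the same brute-force-plus-filtering algorithm and the same counting bound.

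One small slip: $a'\in S$ can never be a private neighbor of $x\neq a'$, since $a'\in N[a']\cap S$. So once $|S\cap V|\geq 2$ and $S\cap V'\neq\emptyset$, the vertex $x$ has no private neighbor at all. This gives your contradiction directly, and it also shows that mixed sets of size $3$ are redundant, so your worry about size-$2$ sets extending into $V'$ never arises (the brute force is harmless anyway).
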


To obtain~\Cref{thm:MIRR:cobip} we consider the hypergraph of closed neighborhoods of the input graph.
We then use the symmetry of its corresponding co-bipartite incidence graph to argue that, besides a polynomial number of solutions of bounded size, the problem amounts to generating all maximal irredundant sets of the input graph twice, and thus obtain the aforementioned result.

Finally, we obtain the following tractable result on strongly orderable graphs, which generalize chordal bipartite graphs \cite{dragan2000strongly}.

\begin{restatable}{theorem}{thmMIRRstronglyorderable}
    \label{thm:MIRR:strongly_orderable}
    There is a polynomial-delay and polynomial-space algorithm solving \graphmirrenum{} on strongly orderable graphs.
\end{restatable}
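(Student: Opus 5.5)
We will design a flashlight-search (backtracking) algorithm over an ordering of the vertices, in the spirit of \cite{conte2019maximal}. This requires a polynomial-time solution to the extension problem. Given disjoint sets \(X, Y \subseteq V(G)\), we must decide whether some maximal irredundant set \(I\) satisfies \(X \subseteq I\) and \(I \cap Y = \emptyset\). With such an oracle, the branching tree has depth \(n\), every internal node has a descendant leaf, and we obtain polynomial delay and polynomial space directly. The whole work therefore consists in solving the extension problem in strongly orderable graphs. We will fix a strong ordering \(v_1,\dots,v_n\), computable in polynomial time \cite{dragan2000strongly}, and the flashlight search will branch on the vertices in this order. In that case \(X \cup Y\) is always a prefix \(\{v_1,\dots,v_i\}\), which should simplify the extension question considerably.

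The key steps are as follows.
\begin{enumerate}
\item Reformulate irredundancy via private neighbors. A set \(I\) is irredundant if and only if there is an injective assignment \(x \mapsto p(x) \in N[x]\) such that \(p(x)\) has no neighbor in \(I\setminus\{x\}\).
\item Characterize maximality. An irredundant \(I\) is maximal if and only if, for every \(v\notin I\), adding \(v\) either leaves \(v\) without a private neighbor or destroys all private neighbors of some \(x\in I\).
\item Decide extendability with the prefix structure. Given \(X\) irredundant and \(Y\) excluded, we would like to greedily extend \(X\) by scanning the remaining vertices \(v_{i+1},\dots,v_n\) and adding each one whenever the set stays irredundant. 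The result is irredundant and contains \(X\). The difficulty is that it might not be maximal with respect to the excluded vertices of \(Y\): some \(y\in Y\) could be addable.
\item Reduce \(Y\)-maximality to a local condition. The extension exists if and only if we can choose \(Z\subseteq\{v_{i+1},\dots\}\) such that each \(y\in Y\) is blocked by \(X\cup Z\). Here \(y\) is blocked if either every vertex of \(N[y]\) is dominated by \((X\cup Z)\setminus\{y\}\), or every private neighbor of some \(x\) lies in \(N[y]\).
\end{enumerate}

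We expect step~4 to be the main obstacle, since in general graphs it encodes \NP-hard extension. Here the strong ordering should be exploited. The defining property of strongly orderable graphs is that whenever \(v_iv_k, v_iv_l, v_jv_k \in E\) with \(i<j\) and \(k<l\), also \(v_jv_l\in E\). This gives a nested (\(\Gamma\)-free) structure of the neighborhoods in the bipartite-like incidence of \(N[\cdot]\).

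Our first attempt will be to show that, for each blocking requirement, a canonical choice suffices: a vertex of \(Z\) that is extremal (latest) in the ordering among those able to dominate the needed neighbors. The \(\Gamma\)-freeness should then imply that this canonical choice dominates a superset of what any other candidate dominates among the relevant neighbors. This turns the problem into a greedy, or at worst bipartite-matching, check for injective private-neighbor assignments, as in the totally balanced/chordal bipartite case.

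If that fails, the fallback is to restrict the flashlight search so that \(Y\) only contains vertices whose non-addability is certified by the already fixed prefix. This would make the check purely local and polynomial. The cost is proving that every maximal irredundant set is still reached, again via the strong-ordering exchange argument.
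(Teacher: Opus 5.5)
\textbf{There is a genuine gap.} The flashlight framework itself is sound, but the whole content of the theorem sits in the polynomial-time extension oracle of Step~4. For that step you offer only a hope (``should imply'', ``first attempt'', ``fallback''), not an argument.

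Fixing a prefix does not make the question local. Whether an excluded \(y\in Y\) is blocked depends on the suffix choice \(Z\), which is exactly the kind of extension question you yourself call \NP-hard in general.

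The ``latest canonical vertex'' exchange is also unsupported, because the constraints pull in opposite directions:
\begin{itemize}
\item blocking \(y\) is a covering requirement (either \(N[y]\subseteq N[(X\cup Z)\setminus\{y\}]\), or some \(x\) has all its private neighbours inside \(N[y]\));
\item irredundancy of \(X\cup Z\) is a packing requirement that gets harder as \(Z\) grows.
\end{itemize}
A vertex that dominates more may also destroy the last private neighbour of an element of \(X\) or of \(Z\), and nestedness of neighbourhoods gives no exchange that handles both sides. Moreover, quasi-simplicity of \(v_{i+1}\) only constrains its neighbourhood inside \(G[V_{i+1}]\), so it says nothing about suffix choices. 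Finally, the matching view of Step~1 is vacuous: if \(p\in\priv(x,I)\cap\priv(x',I)\), then \(\{x\}=N[p]\cap I=\{x'\}\), so injectivity is automatic.

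\textbf{The fallback fails as stated.} With monotone in/out decisions along the order, and exclusions certified by the already fixed prefix, every maximal irredundant set avoiding \(v_1\) is lost, since \(\{v_1\}\) is always irredundant. Concretely, on the path \(a-b-c\) with the quasi-simple order \(a,b,c\), the set \(\{b\}\) is maximal irredundant but is never reached. Postponing certification just brings back the extension problem.

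\textbf{The missing idea is the one the paper uses.}
\begin{itemize}
\item Work with the \emph{trace} hypergraphs \(\H_i=\N(G)_{V_i}\), where the hyperedges are also cut down to the prefix, and take \(\maxirr(\H_i)\) as the partial solutions. Then every partial solution has a child: either \(I^\star\) or \(I^\star\cup\{v_{i+1}\}\) lies in \(\maxirr(\H_{i+1})\). So no extension oracle is needed.
\item Let children \emph{evict} parent vertices. Children are sets \((I^\star\setminus X)\cup\{v_{i+1}\}\), tied to \(I^\star\) by a parent map (delete \(v_{i+1}\), then complete greedily in \(\H_i\)). In the path example, \(\{b\}\in\maxirr(\H_2)\) is a child of \(\{a\}\).
\item Use quasi-simplicity of \(v_{i+1}\) in \(G[V_{i+1}]\), which is exactly the structure visible in the trace. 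Among the vertices of \(I^\star\) all of whose private edges contain \(v_{i+1}\), at most one in \(N_{i+1}(v_{i+1})\) and at most one in \(N^2_{i+1}(v_{i+1})\) survive in any extension.
\end{itemize}
This yields \(O(n^3)\) candidate children, each checked against the parent map in polynomial time. A DFS of the resulting tree then gives polynomial delay and polynomial space.
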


We devise the algorithm of~\Cref{thm:MIRR:strongly_orderable} by using ordered generation in the trace of the closed-neighborhood hypergraph.
Our method is based on earlier work of~\cite{conte2019maximal}, where the algorithm aims to break any redundancies obtained from adding a new vertex into the set by using structural properties of an elimination ordering.

% \odtodo[inline]{A rough description of the techniques used to prove these 3 statements. Could be split (below each statement) or as a big paragraph here. Incidence of a hypergraph (very rough description + ref to prelim) + ordered generation on the hypergraph of neighborhoods but using the trace.}

We turn to minimal redundant sets and obtain the following definite answer to \Cref{qu:output-poly-for-graphs} for \graphmredenum{}.

\begin{restatable}{theorem}{thmMREDsplitcobip}
    \label{thm:MRED:split-cobip}
    There is no output-polynomial-time algorithm for \graphmredenum{} unless $\P\neq \NP$, even when restricted to split and co-bipartite graphs.
\end{restatable}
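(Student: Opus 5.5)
We will reduce from \hypmredenum{} restricted to hypergraphs of dimension at most three, which is hard by \Cref{thm:boros-hardness}. More precisely, Boros and Makino's hardness goes through the decision problem: given a hypergraph \(\H\) and a family \(\mathcal{F}\) of minimal redundant sets of \(\H\), decide whether \(\mathcal{F}\) contains all of them. This problem is \NP-hard, and since minimal redundant sets in dimension three can be recognized in polynomial time, an output-polynomial algorithm would decide it by running it with a time budget polynomial in \(|\H|+|\mathcal{F}|\). The plan is to show that the analogous problem for graphs is \NP-hard on split and on co-bipartite graphs, by embedding \(\H\) into such a graph \(G\). The embedding must make the minimal redundant sets of \(G\) consist of a polynomial-size, explicitly listable family together with a family in bijection with the minimal redundant sets of \(\H\).

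\textbf{Split case.} Given \(\H\) on vertex set \(V\) with edges \(E_1,\dots,E_m\), build \(G\) with a clique on \(V\) and an independent set \(\{e_1,\dots,e_m\}\), where \(e_j\) is adjacent to the vertices of \(E_j\). A set \(S\subseteq V\) is irredundant in \(G\) exactly when every \(x\in S\) has a private edge-vertex \(e_j\) with \(E_j\cap S=\{x\}\). Indeed, for \(|S|\ge 2\), no vertex of \(V\) can be private because of the clique, and \(x\) itself is not private either. Hence, for subsets of \(V\) of size at least two, redundancy in \(G\) coincides with redundancy in \(\H\). (Singletons are always irredundant.)

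It then remains to list the minimal redundant sets of \(G\) that meet the independent side. These are sets containing some \(e_j\), and they are small:
\begin{itemize}
\item \(\{e_j,x\}\) with \(x\in E_j\) is redundant, since \(N[e_j]\subseteq N[x]\cup\{e_j\}\), so \(e_j\)'s only potential private neighbour is itself, which \(x\) dominates;
\item pairs \(\{e_i,e_j\}\) are redundant when their neighbourhoods overlap in a suitable way;
\item triples such as \(\{e_i,e_j,y\}\) may also be redundant.
\end{itemize}
Since every edge has size at most three, I expect every minimal redundant set containing an edge-vertex to have bounded size (at most four or so). If so, this part of the family can be enumerated in polynomial time by brute force, with each candidate checked for minimal redundancy. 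The minimal redundant sets of \(\H\) of size one (none) and two need separate care, but they too are polynomially many.

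\textbf{Co-bipartite case.} Make both sides cliques: \(V\) is a clique, \(\{e_j\}\) is a clique, and the incidences are kept. For \(S\subseteq V\) with \(|S|\ge 2\), a private neighbour of \(x\) must be an \(e_j\), and \(N[e_j]\cap S=E_j\cap S\). So the same correspondence holds. Sets mixing both sides, or lying inside the edge side, should again be of bounded size: two edge-vertices already dominate each other's clique, so privacy must come from the other side, which forces small minimal redundant sets there. If a degeneracy appears, for instance when an edge-vertex has no neighbour in \(V\), I will first normalize \(\H\) (say, by adding pendant private vertices) in a way that preserves the hardness.

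\textbf{Main obstacle.} The delicate step is bounding, in both constructions, the size of minimal redundant sets that intersect the edge side. This is needed so that they can be generated in polynomial time and so that the instance \(\mathcal{F}\) of \(\H\) translates into a family for \(G\) of polynomial size. I would first try to show directly that such a set contains at most two edge-vertices and at most three hypergraph vertices. If that fails, I would modify the gadget by duplicating each hyperedge vertex \(e_j\) into true twins. A pair of true twins is immediately redundant, so sets containing \(e_j\) become trivially small, while sets inside \(V\) are unaffected because each hyperedge still has a private representative.
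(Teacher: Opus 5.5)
\textbf{There is a genuine gap.} Your construction is the same as the paper's: the split incidence graph with $V$ as the clique, and the co-bipartite incidence graph. Your observation is also correct: for $S\subseteq V$ with $|S|\ge 2$, redundancy in $G$ and in $\H$ coincide. But the step you flag as the main obstacle carries the whole proof, and you only conjecture it. That step says every minimal redundant set of $G$ meeting the edge side has bounded size. Without it you cannot bound $|\minred(G)|$ by $|\minred(\H)|$ plus a polynomial, nor complete $\mathcal{F}$ to a polynomial-size family, so the reduction is not established.

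Your partial analysis of the split case is also inaccurate. The $e_j$ are pairwise nonadjacent, so an $e_j$ with no neighbour in the set is self-private. Hence $\{e_i,e_j\}$ is never redundant, and no triple $\{e_i,e_j,y\}$ is minimal redundant. The fallback fails too. Making $e_j,e_j'$ true twins creates an induced $2K_2$ as soon as $\H$ has two edges, so the graph is no longer split. In the co-bipartite graph, twins give no control over minimal redundant sets that contain only one copy.

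\textbf{The missing arguments are short.}

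\emph{Split case.} Each pair $\{v,e\}$ with $v\in E$ is minimal redundant, because $N[e]\subseteq N[v]$. So a minimal redundant $R$ with $|R|\ge 3$ contains no such pair, and every $e\in R\cap U$ is self-private. The redundant vertex $v$ of $R$ therefore lies in $V$. It has no neighbour in $R\cap U$, so it is dominated by some $w\in (R\cap V)\setminus\{v\}$. For any $e\in R\cap U$ we have $N[e]\cap N[v]=E\subseteq N[w]$, so $v$ stays redundant in $R\setminus\{e\}$, contradicting minimality. Hence $R\subseteq V$, and the only solutions meeting $U$ are the pairs $\{v,e\}$ with $v\in E$. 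No dimension bound is needed here.

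\emph{Co-bipartite case.} Any $v\in V$ and $e\in U$ together dominate $C$, so a minimal redundant set meeting both sides has at most three elements. Inside $U$, the part $U$ of $N[e]$ is dominated by any second element. So a redundant $e$ is already redundant in a subset made of $e$ and at most three further edge-vertices covering the at most three vertices of $E$; this is where dimension three is used. Such sets therefore have at most four elements.

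\textbf{Conclusion.} With these facts, $\minred(G)$ is its members of size at most $2$ (resp.\ $4$) together with $\{R\in\minred(\H) : |R|\ge 3\}$ (resp.\ $|R|\ge 5$). Your reduction then goes through.
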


Both results are obtained by reducing from \hypmredenum{} and arguing that, except for a polynomial number of minimal redundant sets, the remaining solutions of the constructed instance of \graphmredenum{} are in an one-to-one correspondence with the solutions of the hypergraph.
To argue the existence of no more than a polynomial number of solutions to discard, we exploit the structure of split graphs and, for the case of co-bipartite graphs, the fact that \hypmredenum{} remains intractable when the hypergraph has dimension equals three.

On the other hand, we obtain the following positive results.
Let us note that $(C_3,C_5,\allowbreak C_6,C_8)$-free graphs contain chordal bipartite graphs, which are $(C_3, C_{\geq 5})$-free~\cite{golumbic1978perfect}. 

\begin{restatable}{theorem}{thmMREDceightfree}
    \label{thm:MIRR:C3C5C6C8}
    There is a polynomial-delay algorithm that solves
    \graphmredenum{} on $(C_3,C_5,C_6,C_8)$-free graphs, and an incremental quasi-polynomial-time algorithm that solves \graphmredenum{} on $(C_3,C_5,C_6)$-free graphs.
\end{restatable}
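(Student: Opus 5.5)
The plan is to reduce \graphmredenum{} to a polynomial number of transversal-type enumeration problems, one for each vertex that may be the ``redundant witness''. Then I would argue that in the considered classes these problems are dualization instances of a special, tractable form.

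\textbf{Step 1: characterization.} A set $R$ is redundant if and only if some $x\in R$ satisfies $N[x]\subseteq N[R\setminus\{x\}]$. If $R$ is moreover minimal redundant, then every $y\in R\setminus\{x\}$ must be needed for this covering, since otherwise $R\setminus\{y\}$ is still redundant. Hence $S=R\setminus\{x\}$ is an inclusion-wise minimal set of $V\setminus\{x\}$ whose closed neighborhoods cover $N[x]$.

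Conversely, such a pair $(x,S)$ gives a redundant set $S\cup\{x\}$. This set is minimal redundant exactly when no $y\in S$ is redundant in $S\cup\{x\}$ with a proper subset, a condition checkable in polynomial time.

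So the solutions are exactly the pairs $(x,S)$ passing this filter, up to duplicates: the same $R$ can arise from several witnesses $x$. I would handle duplicates by outputting $R$ only for its smallest witness, which is also checkable in polynomial time.

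\textbf{Step 2: the covering problem in triangle-free graphs.} Fix $x$. Since $G$ is $C_3$-free, $N(x)$ is independent and the sets involved take a simple form:
\begin{itemize}
\item $x$ itself is covered by any neighbor of $x$;
\item a vertex $u\in N(x)$ covers only $x$ and $u$;
\item a vertex $w$ at distance two from $x$ covers $N(w)\cap N(x)$.
\end{itemize}
Thus enumerating the minimal covers $S$ amounts to enumerating the minimal transversals of the dual of the hypergraph
\[
\H_x=\{N(w)\cap N(x) : w\in N_2(x)\}\cup\{\{u\}: u\in N(x)\},
\]
with the extra constraint that $x$ is covered. That constraint I handle by a case split: either $S$ meets $N(x)$, or $S\subseteq N_2(x)$ and some $w\in S$ already dominates $x$, which is impossible. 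So $S\cap N(x)\neq\emptyset$ is forced, and I would branch on it.

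This is a red-blue domination, i.e., a dualization instance. For $(C_3,C_5,C_6)$-free graphs I would simply invoke the Fredman--Khachiyan algorithm~\cite{fredman1996complexity} to obtain incremental quasi-polynomial time.

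For $(C_3,C_5,C_6,C_8)$-free graphs, I would show that $\H_x$ has strong acyclicity properties, aiming at $\beta$-acyclicity or total balancedness of its incidence bipartite graph. The reason is that a cycle of length $k\ge 3$ in the incidence graph of $\H_x$ uses edges $w_i$ and vertices $u_i$, and shortcutting through $x$ yields a cycle of length $4$, $6$ or $8$ in $G$. The forbidden $C_6$ and $C_8$ exclude the short ones, and longer ones reduce to these via the common neighbor $x$. Dualization of such hypergraphs is solvable with polynomial delay, by flashlight search, since extension of a partial solution is then polynomially decidable.

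\textbf{Step 3: interleaving the filter with the enumeration (main obstacle).} The filter of Step~1 might discard exponentially many covers $S$, which would destroy the delay. I would use $C_5$-freeness, together with the acyclic structure from Step~2, to prove that every minimal cover $S$ of $N[x]$, with $x$ chosen as the smallest witness, yields a minimal redundant set. Failing this, I would prove that the flashlight extension test can incorporate the private-neighbor conditions of the vertices of $S$ while remaining polynomial.

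Concretely, a bad $y\in S$ would have its private neighbors all dominated by $(S\setminus\{y\})\cup\{x\}$, with some proper subset still covering. Tracing such configurations through $x$ should produce a $C_5$ or a $C_6$ or $C_8$. I expect this case analysis to be the hardest part of the proof.
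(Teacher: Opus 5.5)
\textbf{There is a genuine gap.} Your skeleton matches the paper's: fix the redundant vertex $x$, work inside $N^2[x]$, and reduce to red-blue domination. But the two steps that carry the complexity claims do not go through.

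\textbf{The acyclicity claim in Step~2 is false.} Since $x$ is adjacent to all of $N(x)$, shortcutting through $x$ always creates chords. Long cycles in $G[N(x)\cup N^2(x)]$ therefore do not reduce to induced $C_6$ or $C_8$. Concretely, take an induced $C_{10}$ $u_1w_1u_2w_2\cdots u_5w_5$ and add $x$ adjacent to $u_1,\dots,u_5$. This graph is bipartite, and its only induced cycles are the $C_{10}$ and the $4$-cycles $xu_iw_iu_{i+1}$, so it is $(C_3,C_5,C_6,C_8)$-free. Yet $G[N(x)\cup N^2(x)]$ is a $C_{10}$, and the non-singleton edges of $\H_x$ form a $5$-cycle, which is neither $\beta$-acyclic nor totally balanced.

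What works instead is that $G[N^2[x]]$ is bipartite and $(C_6,C_8)$-free. Bipartiteness uses $C_5$-freeness (\Cref{lem:MRED:bipartite-neighborhood}), so that $N^2(x)$ is independent and the red-blue graph is an induced subgraph of $G$. The polynomial-delay red-blue domination algorithm of~\cite{kante2018enumerating} then applies, but only when red and blue lie on opposite sides. That is why the paper first fixes $Y=R\cap N(x)$. There are exponentially many choices of $Y$, so ``branching'' on it needs a guarantee that no branch is empty. The paper proves that the extendable $Y$ with $|Y|\ge 2$ are recognizable in polynomial time and form an independence system (\Cref{lem:MRED:extendable_characterization,lemma:MRED_x-extendable_sets}).

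\textbf{Step~3 is the crux, it remains open, and your first option is false.} Let $N(x)=\{y,u_1,u_2\}$, let $z$ be adjacent to $y$ and $u_1$ only, and let $w$ be a leaf at $u_2$; the only cycle is a $C_4$. Then $S=\{y,z,w\}$ is a minimal cover of $N[x]$, and $x$ can be the smallest redundant vertex of $R=S\cup\{x\}$. Yet $R$ is not minimal redundant, because $\{x,y,z\}$ is already redundant ($z$ is redundant in it). So filtering is unavoidable. Without controlling how many covers it discards, you get neither polynomial delay nor the incremental quasi-polynomial bound: Fredman--Khachiyan bounds time in terms of covers produced, not covers kept.

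The missing idea is to characterize exactly which covers survive, so that the constraints disappear into the instance:
\begin{itemize}
\item Bound $|\red(R)|\le 3$, and show that sets with at least two redundant vertices have size $3$ or equal some $N[x]$, so they can be brute-forced.
\item For $\red(R)=\{x\}$, \Cref{lemma:MRED:charac_1_redundant_1_neighbor,lemma:MRED:charac_1_redundant_multi_neighbor} turn all private-neighbor requirements into deletions on the red side.
\item For $Y=\{y\}$, guess a private neighbor $z^\star\in N(y)$ and delete $z^\star$ and $Z_y=\{z\in N(y): N(z)\subseteq N(x)\}$. For $|Y|\ge 2$, delete $N(Y)$.
\end{itemize}
After these deletions every red-blue solution is an output, and the black-box algorithm is used unchanged. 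The $O(n)$ repetitions over the choices of $z^\star$ are absorbed by a queue, at the cost of exponential space.
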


These algorithms rely on a characterization of the minimal redundant sets of \((C_{3}, C_{5},\allowbreak C_{6})\)-free graphs.
In particular, we show that the number of redundant vertices in a minimal redundant set is bounded for these classes of graphs and that, except for the case of a unique redundant vertex, the set either has bounded size or corresponds to the closed neighborhood of some vertex in the set.
The remaining case where the set has a single redundant vertex is then reduced to instances of red-blue domination
%to instances of computing minimal red-blue dominating sets.
% Hence, we conclude both Theorems.
for which algorithms running in the specified times are known on these classes.

% \odtodo[inline]{A rough description of the techniques used to prove these 3 statements. Incidence of a hypergraph + strategy of partitioning solutions depending on the number of redundant vertices in a minimal redundant set.}

\subparagraph{Organization.} The rest of the paper is organized as follows.
We introduce concepts in \Cref{section:preliminaries}.
We focus on maximal irredundant sets in \Cref{section:MIRR} that we break into subsections according to the graph classes we consider.
We then turn on minimal redundant sets in \Cref{section:MRED} that we organize analogously.
Lastly, future directions are discussed in \Cref{sec:conclusion}.
\iflongelse{}{Due to space constraints, all proofs are omitted and the reader is referred to the long version of this manuscript (in appendix) for more details. For the same reasons, only the case of co-bipartite graphs in \Cref{thm:MRED:split-cobip} is addressed in this extended abstract.}

\section{Preliminaries}
\label{section:preliminaries}

% For each \(x \in V(G)\) the set \(B(x, d) \coloneqq \{y \in V(G) \mid \dist(x, y) \leq d\}\) is the \emph{ball of radius \(d\) centered at \(x\)}. \odtodo{we may want to use $N^2[x]$ as we used similar $N^2(x)$ in previous sections and $B$ is taken for bip of incidence}

In this paper, we consider undirected finite simple graphs.
We assume the reader is familiar with standard terminology from graph and hypergraph theories, and refer to~\cite{berge1984hypergraphs, diestel2005graph} for the notations not defined below. 
Given a vertex $v$ and an integer $d$, by $N^d(v)$ and $N^d[v]$ we respectively mean the set of vertices lying at distance exactly $d$ and at most $d$ from $v$.

\subparagraph{Strongly orderable graphs.} Given a graph \(G\), a pair of vertices \(x\) and \(y\) of \(G\) are \emph{comparable} if either \(N(x) \setminus \{y\} \subseteq N(y) \setminus \{x\}\) or \(N(y) \setminus \{x\} \subseteq N(x) \setminus \{y\}\).
A vertex \(x\) is \emph{quasi-simple} if the vertices in \(N(x)\) are pairwise comparable.
In Figure~\ref{fig:quasi-simple-vertex}, we give an example of a graph admitting a quasi-simple vertex.
Lastly, a \emph{quasi-simple elimination ordering} is an ordering \(v_{1}, \dots, v_{n}\) of \(V(G)\) such that for every \(i \in [n]\) it follows that \(v_{i}\) is quasi-simple in the graph \(G[\{v_{1}, \dots, v_{i}\}]\) induced by the $i$ first vertices.
Graphs that admit a quasi-simple elimination ordering are precisely the \emph{strongly orderable graphs}, which were introduced as a generalization of both chordal bipartite and strongly chordal graphs in~\cite{dragan2000strongly}.

\begin{figure}
    \centering
    \includegraphics[scale=0.85]{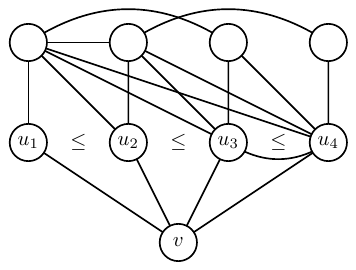}
    \caption{A graph $G$ in which the vertex $v$ is quasi-simple. 
    Within its neighborhood, we have $u_1 \leq u_2 \leq u_3 \leq u_4$ where $\leq$ indicates comparability.}
    \label{fig:quasi-simple-vertex}
\end{figure}

\subparagraph{Hypergraphs.} Let \(\H\) be a hypergraph.
Then, the vertices and edges of \(\H\) are denoted by \(V(\H)\) and \(\E(\H)\), respectively.
We call \(\inc_{\H}(x) \coloneqq \{E \in \E(\H) \mid x \in E\}\) the set of hyperedges containing \(x\).
The subscript is omitted when the hypergraph is clear from context.
If \(G\) is a graph, then its \emph{closed-neighborhood hypergraph}, denoted $\N(G)$, is defined on the same vertex set and hyperedge set \(\{N[x] \mid x \in V(G)\}\).

Let \(E_{1}, \dots, E_{m}\) be an ordering of \(\E(\H)\).
The \emph{transposed hypergraph} of $\H$ is the hypergraph $\H^t$ with $V(\H^{t}) \coloneqq \{e_1, \dots, e_m\}$ and $\E(\H^{t}) \coloneqq \{F_1, \dots, F_n\}$ where $F_i = \{e_j \in V(\H^{t}) \mid v_i \in E_j\}$.
Moreover, given a set \(S \subseteq V\), the \emph{trace} of \(\H\) with respect to \(S\) is the hypergraph \(\H_{S}\) where \(V(\H_{S}) = S\) and \(\E(\H_{S}) \coloneqq \{E \cap S \mid E \in \E(\H),\ E \cap S \neq \emptyset\}\).

\subparagraph{Incidence graphs.} Let $\H$ be a hypergraph with $V(\H) = \{v_1, \dots, v_n\}$ and $\E(\H) = \{E_1, \dots,\allowbreak E_m\}$.
The \emph{incidence bipartite graph} of $\H$ is the bipartite graph $B(\H)$ with parts $V = V(\H)$ and $U \coloneqq \{e_1, \dots, e_m\}$,  and edge set  $\{v_i e_j \mid v_i \in E_j\}$.
Similarly, the \emph{incidence co-bipartite graph} of $\H$ is the co-bipartite graph $C(\H)$ obtained from $B(\H)$ by turning $V$ and $U$ into cliques.
Finally, we can define two \emph{incidence split graphs} for $\H$ depending on which of $V$ and $U$ is the independent set.
We define $S_1(\H)$ to be the split graph obtained from $B(\H)$ by making $U$ a clique, and $S_2(\H)$ to be the split graph where $V$ is the clique.
We give in Figure~\ref{fig:incidence-graphs} an example illustrating all four definitions. 

\begin{figure}[hb!]
    \centering
    \includegraphics[width=0.95\linewidth]{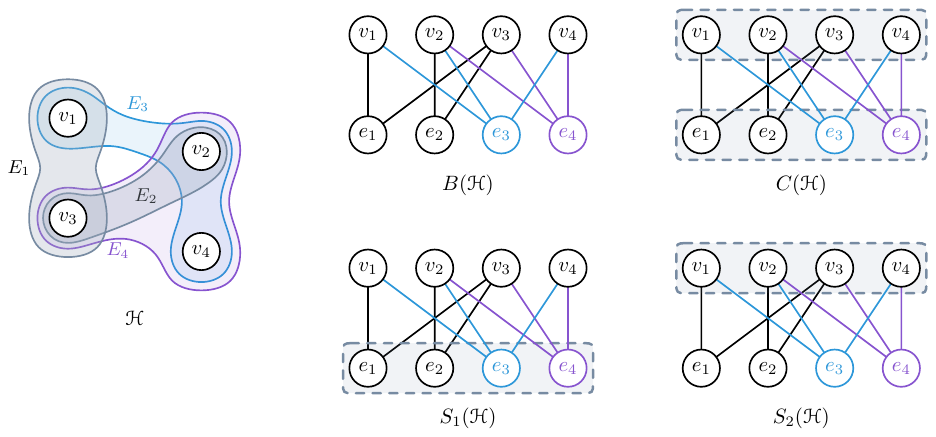}
    \caption{An hypergraph $\H$ and its incidence graphs $B(\H)$, $C(\H)$, $S_1(\H)$ and $S_2(\H)$.
    In graphs, cliques are indicated by dotted grey zones.}
    \label{fig:incidence-graphs}
\end{figure}

\subparagraph{Domination.} Let \(G\) be a graph.
We say that a subset of vertices $D$ is a \emph{dominating set} of $G$ if $V(G)=N[D]$.
Given a bipartition \((A, B)\) of the vertices, we say that a subset $D\subseteq A$ \emph{dominates} $B$ if \(B \subseteq N(D)\).
In this latter context, these sets are usually referred to as \emph{red-blue dominating sets}, where elements of $A$ are assumed to be colored red, and those in $B$ to be blue.
Generating every minimal red-blue dominating set is known to be solvable with polynomial delay and space when restricted to chordal bipartite graphs~\cite{golovach2016enumerating, castelo2025enumerating}.
It can also be solved with polynomial delay in \((C_{6}, C_{8})\)-free bipartite graphs~\cite{kante2018enumerating} as long as red and blue vertices belong to distinct parts of the graph.
In general, the problem is equivalent to hypergraph dualization and can thus be solved in incremental-quasi-polynomial time; see e.g.~\cite{eiter2008computational,golovach2016enumerating}.
% generating every minimal transversal of the hypergraph defined as \(\H \coloneqq \{N(x) \cap A \mid x \in B\}\).
% Lastly, given \(S \subseteq V(G)\) we define \(\minrb(G, S) \coloneqq \{D \subseteq V(G) \setminus S \mid D \text{ is a minimal red-blue dominating set of } S\}\) as the set of every minimal red-blue dominating set covering \(S\).

\subparagraph{(Ir)redundancy.}
Let $\H$ be a hypergraph, $I \subseteq V(\H)$, and $x\in I$.
We call \emph{private edge of $x$ with respect to $I$} the edges in the set $\priv_\H(x, I) \coloneqq \{E\in \E(\H) \mid E\cap I=\{x\}\}$.
If $\H$ is clear from the context, we drop it from this notation.
We say that $I$ is an \emph{irredundant set} of $\H$ if every element $x$ of $I$ has a private edge, and that it is a \emph{redundant set} otherwise.

Observe that, given $\H$, one can check in polynomial time whether a set $I$ of vertices is redundant or irredundant by computing $\priv(x, I)$ for each $x \in I$.
We denote by $\maxirr(\H)$ the family of inclusion-wise maximal irredundant sets of $\H$, and by $\minred(\H)$ its family of inclusion-wise minimal redundant sets.
\iflongelse{%

    The problems of generating all maximal irredundant sets (resp.~all minimal redundant sets) thus read as follows:
    
    \begin{enumproblem}
    \problemtitle{Maximal Irredundant Sets Enumeration in Hypergraphs (\hypmirrenum{})}
    \probleminput{a hypergraph $\H$.}
    \problemquestion{the family $\maxirr(\H)$.}
    \end{enumproblem}
    
    \begin{enumproblem}
    \problemtitle{Minimal Redundant Sets Enumeration in Hypergraphs (\hypmredenum{})}
    \probleminput{a hypergraph $\H$.}
    \problemquestion{the family $\minred(\H)$.}
    \end{enumproblem}%
}{
    The two problems \hypmirrenum{} and \hypmredenum{} are respectively defined as producing, given $\H$, the sets $\maxirr(\H)$ and $\minred(\H)$.
}

We recall that these problems were shown intractable even on hypergraphs of dimension at most three \cite{boros2016wepa,boros2024generating} and refer to the introduction for the state of the art on these problems.
% These problems have been studied by Boros and Makino~\cite{boros2024generating} in terms of transposed hypergraphs, that is by considering sets of edges having private vertices instead of sets of vertices having private edges. 
% We translate their results in our language.
% First, they show that for hypergraphs of dimension 3, both problems are already intractable.
% For dimension at most 2, they prove that \graphmredenum{} can be solved in (input-)polynomial time, while \mirrenum{} is equivalent to the problem of enumerating all minimal dominating sets of a graph, hence to hypergraph dualization. 
% %, an open problem for which the best-known algorithm runs in output-quasipolynomial time \cite{fredman1996complexity}. % let's move this to the introduction ?
% Finally, for hypergraphs with bounded degree, they prove that \graphmredenum{} can be solved in (input-)polynomial time and \graphmirrenum{} in incremental-polynomial time.
% Let us also mention that a polynomial-delay algorithm has been given for \graphmirrenum{} on hypergraphs of bounded degeneracy in \cite{conte2019maximal}.

In this paper, we are interested in the analogues of redundancy and irredundancy in graphs, that we recall now.
Let $G$ be a graph, $I \subseteq V(G)$, and $x\in I$.
We call \emph{private neighbor} of $x$ the elements of the set $\priv_G(x,I) \coloneqq \{y\in N[x] \mid N[y] \cap I = \{x\}\}$.
Note that a vertex may be self-private.
Analogously, as for hypergraphs, a subset $I$ of vertices is called \emph{irredundant} if $\priv_G(x,I)\neq \emptyset$ for all $x\in I$, and \emph{redundant} otherwise.
We denote by $\maxirr(G)$ and $\minred(G)$ the set of maximal irredundant and minimal redundant sets of $G$.
To avoid any confusion between graphs and hypergraphs, we will only refer to hypergraphs using calligraphic letters.
\iflongelse{%

    The enumeration problems we are interested in are defined as follows.
    
    \begin{enumproblem}
    \problemtitle{Maximal Irredundant Sets Enumeration on Graphs (\graphmirrenum{})}
    \probleminput{a graph $G$.}
    \problemquestion{the family $\maxirr(G)$.}
    \end{enumproblem}
    \begin{enumproblem}
    \problemtitle{Minimal Redundant Sets Enumeration on Graphs (\graphmredenum{})}
    \probleminput{a graph $G$.}
    \problemquestion{the family $\minred(G)$.}
    \end{enumproblem}
}{
    The two problems \graphmirrenum{} and \graphmredenum{} are respectively defined as producing, given $G$, the sets $\maxirr(G)$ and $\minred(G)$.
}

As stated in the introduction, it is easily seen that $\maxirr(G)$ and $\minred(G)$ coincide with the maximal irredundant and minimal redundant sets of $\N(G)$.
They should, however, not be confused with the maximal irredundant and minimal redundant of $G$ seen as a hypergraph of dimension 2.

\section{Maximal irredundant sets}
\label{section:MIRR}

\iflongelse{
In this section we characterize the complexity of \graphmirrenum{} in split and co-bipartite graphs, as well as in strongly orderable graphs, which generalize chordal bipartite graphs.%
}{
In this section we characterize the complexity of \graphmirrenum{} in co-bipartite graphs and strongly orderable graphs, the latter being a generalization of chordal bipartite graphs.
We refer the reader to the appendix for a proof of the case of split graphs originally claimed by Uno in~\cite{bodlaender2015open}.
}

\subsection{Strongly orderable graphs}
\label{subsection:MIRR_strongly_orderable}

% - Fixed confusions around $i\in [n]$, $i\in [n-1]$, $1<i\leq n$\\
% - Defined $\H_0$ as the base case\\
% - No more notion of $G_i$ to avoid confusion with $\H_i$\\
% - Moved stars so that $I^{\star}$ is for the parent only\\
% - rewrote the parent definition\\
% - added a rough description of the strategy before the tree definition\\
% - changed the notion of extension to remove minimality\\
% - changed $R(I, i+1)$ to $R(I, i)$ to match children definition\\
% - explicited the link between $\H$ and $G$ in the proofs : (traced) edges are actually pieces of neighborhoods of $G$

We give an algorithm that generates all maximal irredundant sets of strongly orderable graphs with polynomial delay and space.
Our algorithm is based on the \emph{ordered generation} framework (also known as the \emph{sequential method}) that has been successfully applied to various instances admitting elimination orders~\cite{eiter2003new,bartier2024hypergraph,castelo2025enumerating}.
Our algorithm, however, differs from the strategy used in the aforementioned works by considering traces of the hypergraph (instead of its induced subhypergraphs) in the decomposition, following what has been done in \cite{conte2019maximal} for irredundant sets.
% The key insight is the structural properties ensured by the elimination ordering, which define this class of graphs.

Let \(G\) be a graph.
In the remaining, let us fix an arbitrary ordering \(v_{1}, \dots, v_{n}\) of the vertex set of \(G\), and let \(V_{i} \coloneqq \{v_{1}, \dots, v_{i}\}\) denote the set of its \(i\) first vertices.

Let \(\H \coloneqq \N(G)\) be the closed-neighborhood hypergraph of $G$, and \(\H_{i} \coloneqq \H_{V_{i}}\) be a shorthand for the trace of $\H$ on $V_i$. 
For each \(I \subseteq V_{i}\) and \(x \in I\), by \(\priv_{i}(x, I)\) we mean the set of private edges of \(x\) with respect to \(I\) in \(\H_{i}\), and by \(\inc_{i}(x)\) we mean the collection of hyperedges of \(\H_{i}\) that contain \(x\). 
Note that \(\maxirr(\H_{1}) = \{\{v_{1}\}\}\). 
This is due to \(N[v_{1}]\) being a hyperedge in \(\H\), hence the singleton \(\{v_{1}\}\) is the only hyperedge of \(\H_{1}\).

The goal of the algorithm will be to construct $\maxirr(\H_i)$ for $i$ ranging from 1 to $n$, from which we only output $\maxirr(\H_n)=\maxirr(\H)$.
We will call \emph{partial solutions} the maximal irredundant sets of $\H_i$ for $i<n$, and \emph{solutions} those of $\H_n$.
However, and to guarantee polynomial delay between consecutive outputs, we will not construct the sets $\maxirr(\H_i)$ one after the other.
Rather, we will define a tree over the set of solutions and partial solutions that will be traversed by our algorithm.

To define this tree, we start with an easy observation whose proof is omitted; see also~\cite{conte2019maximal}.
The intuition is that whenever a vertex $v_j$ has a private edge in $\H_i$ for some $i>j$, then it keeps that private edge in any subhypergraph $\H_k$, where $j\leq k\leq i$.

\begin{proposition}\label{prop:irr-without-viplusone}
    Let $i\in \{2, \dots, n\}$ and \(I \in \maxirr(\H_i)\).
    Then \(I \setminus \{v_i\}\) is an irredundant set of \(\H_{i-1}\).
\end{proposition}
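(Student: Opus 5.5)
The plan is to transfer private edges directly from $\H_i$ to $\H_{i-1}$ through the trace operation. Set $J \coloneqq I\setminus\{v_i\}$, so $J\subseteq V_{i-1}$. For $x\in J$ we must exhibit an edge of $\H_{i-1}$ whose intersection with $J$ is exactly $\{x\}$. Maximality of $I$ plays no role; we only use that $I$ is irredundant in $\H_i$.

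First, I would fix $x\in J$. Since $I$ is irredundant in $\H_i$, there is an edge $F\in\priv_i(x,I)$. By definition of the trace, $F=E\cap V_i$ for some hyperedge $E\in\E(\H)$, which here is a closed neighborhood $N[y]$ of some $y\in V(G)$. Thus $E\cap V_i\cap I=\{x\}$, and because $I\subseteq V_i$ this reads $E\cap I=\{x\}$.

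Next, I would consider the traced edge $F'\coloneqq E\cap V_{i-1}$. It contains $x$, because $x\in V_{i-1}$ (as $x\neq v_i$) and $x\in E$. Hence $F'$ is nonempty and is a genuine edge of $\H_{i-1}$. Moreover,
\[
F'\cap J = E\cap V_{i-1}\cap (I\setminus\{v_i\}) = E\cap (I\setminus\{v_i\}) \subseteq E\cap I=\{x\},
\]
and it contains $x$, so $F'\cap J=\{x\}$. Therefore $F'\in\priv_{i-1}(x,J)$. Since $x$ was arbitrary, $J$ is irredundant in $\H_{i-1}$.

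I expect no real obstacle. The only points needing care are that traced edges must be nonempty to belong to $\H_{i-1}$, which is guaranteed because $x\in F'$, and that we lift the private edge back to an original hyperedge of $\H$ before restricting it, rather than trying to restrict $F$ directly.
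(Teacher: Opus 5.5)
Your proof is correct and matches the paper's idea. The paper omits its proof and only remarks that a private edge in $\H_i$ persists in smaller traces; you make this precise by lifting the private edge to a hyperedge $E$ of $\H$ and restricting it to $V_{i-1}$, where $E \cap V_{i-1}$ still contains $x$ and meets $I\setminus\{v_i\}$ only in $x$. Your observation that only the irredundancy of $I$ is used, not its maximality, is also accurate.
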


% To construct a solution tree, i.e., a tree where each vertex represents a maximal irredundant set of \(\H_{i}\) for some \(i \in [n]\), it is useful to define a notion of \emph{parent} of the vertices.
Let $i\in \{2, \dots, n\}$ and \(I \in \maxirr(\H_{i})\).
The \emph{parent of $I$ with respect to $i$}, denoted \(\parent(I, i)\), is defined as the set obtained by the following procedure.
First, if $v_i$ belongs to \(I\), we remove it.
Then, while there exists a vertex \(x \in V_{i-1} \setminus I\) such that \(I \cup \{x\}\) is irredundant in $\H_{i-1}$, we add the smallest such vertex \(x\) into \(I\).
Note that the obtained set is uniquely defined.
By Proposition~\ref{prop:irr-without-viplusone}, it is an irredundant set of $\H_{i-1}$ after the first step, and it is extended into a maximal irredundant set of $\H_{i-1}$ at the end of the procedure.
By construction we thus get 
\[\parent(I, i) \in \maxirr(\H_{i-1})\]
and \(\parent(I, i)=I\) whenever $v_i\not\in I$.

Conversely, given \(i \in [n-1]\) and $I^{\star}\in \maxirr(\H_i)$, we define the \emph{children of $I^{\star}$ with respect to $i$} as \(\children(I^{\star}, i) \coloneqq \{I \in \maxirr(\H_{i+1}) \mid \parent(I, i + 1) = I^{\star}\}\).

The following shows that any partial solution has at least one child, and that it has precisely one child if it can be extended into an irredundant set of $\H_{i+1}$ by adding $v_{i+1}$.

\begin{lemma}
    \label{lemma:MIRR_parent_function}
	Let \(i \in [n-1]\) and \(I^{\star} \in \maxirr(\H_{i})\).
	Then, either: 
	\begin{itemize}
		\item \(I^{\star} \in \maxirr(\H_{i+1})\), in which case \(I^{\star}\in \children(I^{\star}, i)\); or
		\item \(I^{\star} \cup \{v_{i+1}\} \in \maxirr(\H_{i+1})\), in which case \(\children(I^{\star}, i) = \{I^{\star}\cup \{v_{i+1}\}\}\).
	\end{itemize}
	
	\begin{proof}
		Let \(I^{\star} \in \maxirr(\H_{i})\).
        By definition, $I^{\star}$ does not contain $v_{i+1}$, and no vertex in $V_i$ can be added to $I^{\star}$ while maintaining the irredundancy of $I^{\star}$ in $\H_i$.
        Hence the $\parent$ procedure, when performed on $(I^{\star},i+1)$ in case \(I^{\star} \in \maxirr(\H_{i+1})\), does not modify the set and gives \(\parent(I^{\star}, i+1) = I^{\star}\), thus \(I^{\star}\in \children(I^{\star}, i)\).
        This proves the first assertion of the claim.
		
        Let us now assume that the first case does not arise, i.e., \(I^{\star} \not\in \maxirr(\H_{i+1})\).
		Suppose, for the sake of contradiction, that \(I^{\star} \cup \{v_{i+1}\}\) is redundant.
		Since \(I^{\star} \in \maxirr(\H_{i})\) every set \(I^{\star} \cup \{x\}\) with \(x \in V_{i} \setminus I^{\star}\) is redundant in \(\H_{i}\).
        The same is true in $\H_{i+1}$ since a private edge of such a $x$ in \(\H_{i+1}\) would yield one (possibly reduced to its intersection with $V_i$) in $\H_i$.
		However, this implies that \(I^{\star}\) is inclusion-wise maximal with respect to being irredundant in \(\H_{i+1}\), contradicting our assumption.
		So \(I^{\star} \cup \{v_{i+1}\}\) is irredundant.
        As $\H_i$ and $\H_{i+1}$ only differ by the trace of the edges incident to $v_{i+1}$, the set $I^{\star}\cup \{v_{i+1}\}$ must be maximal with that property.
        % We derive that \(\parent(I^{\star} \cup \{v_{i+1}\}, i+1) = I^{\star}\), and hence that \svtodo{not $I^\star = \children(I^\start, i)$} $\{I^{\star} \cup \{v_{i+1}\}\}= \children(I^{\star}, i)$ in that case.}
        By definition of the parent relation, we thus obtain $\{I^\star \cup \{v_{i+1}\}\} \subseteq \children(I^\star, i)$.
        The other inclusion follows from the fact that any $I' \in \children(I^\star, i)$ satisfies $I' \setminus \{v_{i+1}\} \subseteq I^\star$, hence $I' \subseteq I^\star \cup \{v_{i+1}\}$.
	\end{proof}
\end{lemma}

Note that the $\parent$ relation defines a tree $\T$ on the set of nodes $\{(I,i) \mid I\in \maxirr(\H_i),\allowbreak\ 1 \leq i\leq n\}$, whose root is $(\{v_{1}\}, 1)$, and where there is an edge between two nodes $(I^{\star},i)$ and $(I,i+1)$ if $I\in\children(I^{\star},i)$.
Moreover, by Lemma~\ref{lemma:MIRR_parent_function}, every node $(I,i)$ corresponding to a partial solution $I$ has a child. Hence the set of leaves of this tree is precisely the family $\{(I,n) \mid I\in \maxirr(\H)\}$ we wish to enumerate.

In the following, we show that it is sufficient to be able to list the children with polynomial delay and space to derive a polynomial delay and space algorithm for \graphmirrenum{}.

\begin{proposition}
    \label{proposition:MIRR_generation_algorithm}
    Let \(f, s \colon \mathbb{N} \rightarrow \mathbb{N}\) be two functions.
	% Fix an ordering \(v_{1}, \dots, v_{n}\) of \(V(G)\).
    Suppose that there is an algorithm that, for any \(i \in [n-1]\) and \(I^{\star} \in \maxirr(\H_{i})\), generates $\children(I^{\star}, i)$ with \(f(n)\) delay and \(s(n)\) space. Then there is an algorithm that generates \(\maxirr(\H)\) with \(O(n \cdot f(n))\) delay and \(O(n \cdot s(n))\) space.
	
	\begin{proof}
		% Our goal is to generate all maximal irredundant sets of \(\H\) with polynomial delay and space. \odtodo{proof omitted for now.}
		% To achieve this, we construct a search tree whose vertices represent partial solutions, and then perform a depth-first search from the root.
	       %
        Let \(\algoA\) be the children-generation algorithm as given by assumption.
        We describe an algorithm \(\algoB\) which generates all maximal irredundant sets with the aforementioned worst-case guarantees as follows.
		% For the base case we set \(\parent(I, 1) = \emptyset\) for each \(I \in \maxirr(\H_{1})\).
		% The parent relation then defines a directed graph \(G\) with node set \(\{(I, i) \mid i \in [n], I \in \maxirr(\H_{i})\} \cup \{(\emptyset, 0)\}\) where there is an arc from \((I, i)\) to \((I^{\star}, j)\) if \(I = \parent(I^{\star}, j)\) and \(j = i + 1\).
  %       Since \(\parent\) is uniquely defined for each node except \((\emptyset, 0)\), the directed graph \(G\) must be a directed tree with \((\emptyset, 0)\) as the root.

        The algorithm performs a DFS on $\T$ starting from the root $(\{v_{1}\}, 1)$ and outputs each leaf as it is visited.
        Its correctness follows by the above discussion.
        Since every leaf is at depth \(n\), and for every \(i \in [n-1]\) and \(I \in \maxirr(\H_{i})\) the children of \((I, i)\) can be computed with \(f(n)\) delay, a first maximal irredundant set of \(\H\) is obtained in \(O(n \cdot f(n))\) time.
        As leaves are at a distance at most \(2n\) in $\T$, the time spent by the DFS between consecutive outputs is bounded by \(O(n \cdot f(n))\) as well. 
        Finally, after the last output, the algorithm concludes that no other solution exists within the same time after having backtracked to the root and concluding that it has explored all its children.
        
        Note that the algorithm requires \(O(n \cdot s(n))\) space for maintaining the recursion stack for backtracking.
        This concludes the proof.
	\end{proof}
\end{proposition}

We now focus on generating $\children(I^{\star},i)$ for fixed \(i \in [n-1]\) and \(I^{\star} \in \maxirr(\H_{i})\) such that \(I^{\star}\cup \{v_{i+1}\}\) is redundant.
Recall by Lemma~\ref{lemma:MIRR_parent_function} that $I^{\star}$ is one of these children, and we may thus focus on those containing $v_{i+1}$, which we shall call \emph{non-trivial children}.

Our strategy is to find all sets of the form \(Z \coloneqq (I^{\star} \setminus X) \cup \{v_{i+1}\}\), where \(X \subseteq I^{\star}\), that are maximal irredundant sets of \(\H_{i+1}\).
We call each such set \(Z\) an \emph{extension of $I^{\star}$ to $i+1$}.
Note that these extensions do not define supersets of $I^{\star}$, as they are obtained by adding $v_{i+1}$ and removing elements of $I^{\star}$.
As another remark, note that possibly not all extensions of $I^{\star}$ to $i+1$ are children of $I^{\star}$ with respect to $i$. 
However, all non-trivial children $I$ of $I^{\star}$ are extensions of $I^{\star}$ to $i+1$:
by definition, if $I^\star = \parent(I)$ then $I^{\star}$ is of the form $(I\setminus \{v_{i+1}\})\cup X$ for some $X\subseteq V_{i}$, and so $I=(I^{\star} \setminus X) \cup \{v_{i+1}\}$ with $X\subseteq I^\star$.
% a vertex can be added during the algorithm $\parent$ only if, after removing $v_{i+1}$ from $I$, this vertex does not dominate all the private neighbors of a vertex in \(I \setminus \{v_{i+1}\}\).

% \textcolor{red}{We aim to find the subsets \(X \subseteq R(I^{\star}, i)\) such that \((I^{\star} \cup \{v_{i+1}\}) \setminus X\) is a maximal irredundant set of \(\H_{i+1}\). 
% In other words, we want to remove vertices of $R(I^{\star},i)$ so that the remaining ones gain private neighbors.
% }\ectodo{This is incorrect. See algorithm in Lemma 3.6.}\ectodo{I am thinking of a good explanation to replace it.}

% Note that the sets $X$ as described above are the vertices to remove vertices of \(I^{\star}\) such that those in \(R(I^{\star}, i) \setminus X\) gain private neighbors.

% In the first case, it follows that such a private edge must contain $v_{i+1}$, and thus this vertex is at a distance of at most 2 from $v_{i+1}$ in $G[V_{i+1}]$.
% Otherwise, there exists a vertex in \(I \setminus \{v_{i+1}\}\) at a distance at most \(2\) from \(v_{i+1}\) that gets a new private edge, and hence adding \(y\) does not dominate all its private neighbors, in which case the vertex must be at distance at most \(4\) from \(v_{i+1}\) in \(G[V_{i+1}]\).
% We will argue that we can afford to compute all extensions and sort out children while preserving polynomial delay and space, with an extra assumption on the structure of $G$.

Let us now assume that \(G\) is strongly orderable and that \(v_{1}, \dots, v_{n}\) is a quasi-simple elimination ordering of its vertices.
In the following, by $N_i(x)$ we mean the neighborhood of $x$ intersected with $V_i$.

To compute all extensions efficiently, we will rely on a characterization of their intersection with the set of vertices that become redundant when adding $v_{i+1}$ to $I^\star$.
More formally, let \(R(I^{\star}, i) \coloneqq \{x \in I^{\star} \mid \priv_{i+1}(x, I^{\star}) \subseteq \inc_{i+1}(v_{i+1})\}\) denote such a set.
Note that the vertices in \(R(I^{\star}, i)\) lie at a distance of at most 2 from $v_{i+1}$ in $G[V_{i+1}]$.
An upper bound on the number of elements in \(R(I^{\star}, i)\) found in an extension of $I^{\star}$ is given in the following lemma.

\begin{lemma}\label{lemma:MIRR:charac_strongly_orderable}
    Let Z be an extension of $I^\star$ to $i+1$.
    Then:
    \begin{itemize}
        \item \(|R(I^{\star}, i) \cap N_{i+1}(v_{i+1}) \cap Z| \leq 1\); and 
        \item \(|R(I^{\star}, i) \cap N^{2}_{i+1}(v_{i+1})| \leq 1\).
    \end{itemize}

    \begin{proof}
        By definition of $R(I^{\star},i)$, the private edges of an element $x\in R(I^{\star},i)$ with respect to $I^{\star}$ in $\H_{i+1}$ all contain $v_{i+1}$.
        By the definition of $\H$, each such private edge is thus of the form $N_{i+1}[u]$ for some $u\in N_{i+1}[v_{i+1}]$, with possibly $u=x$, i.e., $x$ is self-private and lies in the neighborhood of $v_{i+1}$.
        Recall that by definition, $Z$ is a maximal irredundant set of $\H_{i+1}$, and so each of its elements has a private edge in $\H_{i+1}$.
        
        Assume, for the sake of contradiction, that there are at least two distinct vertices $x_{1}$ and $ x_{2}$ in $R(I^{\star}, i) \cap N_{i+1}(v_{i+1}) \cap Z$.
        Since \(v_{i+1}\) is quasi-simple, $x_{1}$ and $x_{2}$ are pairwise comparable in $G[V_{i+1}]$. 
        So the only way for $x_{1}$ and $x_{2}$ to both have a private neighbor as described above is for one to be self-private with respect to $I^{\star}$ in $\H_{i+1}$.
        However, by definition, $v_{i+1}$ belongs to $Z$.
        So $x_1$ and $x_2$ cannot be self-private with respect to $Z$.
        We conclude that one of the two is redundant in $Z$, which contradicts our assumption and proves the first item.

        Additionally, from the fact that \(v_{i+1}\) is quasi-simple, we also derive that for every pair of distinct vertices \(y_{1}, y_{2} \in N^{2}_{i+1}(v_{i+1})\), either \(N_{i+1}(y_{1}) \cap N_{i+1}(v_{i+1}) \subseteq N_{i+1}(y_{2}) \cap N_{i+1}(v_{i+1})\) or the opposite holds.
        Consequently, a pair of vertices in \(N_{i+1}^{2}(v_{i+1})\) can neither simultaneously have private edges as described above.
        Hence, they cannot both have a private edge in $\H_{i+1}$, proving the second item of the claim.
    \end{proof}
\end{lemma}

Let us point out that $Z$ cannot be removed from the above statement. 
In fact, the cardinality of $R(I^{\star}, i)\cap N_{i+1}(v_{i+1})$ is unbounded in general.
This can be seen by considering the situation where $N_{i+1}(v_{i+1})$ is an independent set whose elements are self-private with respect to $I^{\star}$.

Additionally, let us point that the set \(X \subseteq I^{\star}\) such that \(Z =(I^{\star} \setminus X) \cup \{v_{i+1}\}\) is an extension may contain vertices that are not in $R(I^{\star}, i)$.
Indeed, removing vertices from $I^\star\setminus R(I^{\star}, i)$ may provide private edges to the remaining vertices in $R(I^{\star}, i)$.

However, from Lemma~\ref{lemma:MIRR:charac_strongly_orderable} we know that at most two vertices from $R(I^{\star}, i)$ may be kept in an extension, from which we derive the following.

\begin{lemma}
\label{lem:MIRR:bounded_number_of_candidate}
	% Let \(i \in [n-1]\) and \(I \in \maxirr(\H_{i})\).
    The number of extensions of $I^{\star}$ to $i+1$ is bounded by $O(n^{3})$, and these extensions can be computed in polynomial time  in $n$.
	% hence the size of $\children(I, i+1)$, Moreover, these sets can be computed in polynomial time in $n$.
\end{lemma}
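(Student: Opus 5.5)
The plan is to show that an extension $Z=(I^\star\setminus X)\cup\{v_{i+1}\}$ is determined by its intersection with $R(I^\star,i)$, and that this intersection has at most two elements. By Lemma~\ref{lemma:MIRR:charac_strongly_orderable}, $Z\cap R(I^\star,i)$ contains at most one vertex of $N_{i+1}(v_{i+1})$ and at most one vertex of $N^2_{i+1}(v_{i+1})$. Vertices of $R(I^\star,i)$ are at distance at most $2$ from $v_{i+1}$, and $v_{i+1}\notin I^\star$, so $Z\cap R(I^\star,i)$ has size at most two. Consequently there are at most $O(n^2)$ candidate sets $K\subseteq R(I^\star,i)$ to consider: the empty set, singletons, and one neighbor paired with one vertex at distance two.

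For a fixed $K$, we show that at most $O(n)$ extensions $Z$ satisfy $Z\cap R(I^\star,i)=K$. First, every $x\in I^\star\setminus R(I^\star,i)$ has, by definition, a private edge in $\H_{i+1}$ with respect to $I^\star$ that avoids $v_{i+1}$. That edge stays private for $x$ in any set $(I^\star\setminus X)\cup\{v_{i+1}\}$ that contains $x$, so such vertices never become redundant. Now start from $Y_K:=(I^\star\setminus R(I^\star,i))\cup K\cup\{v_{i+1}\}$. The only vertices that can be redundant in $Y_K$ are $v_{i+1}$ and the elements of $K$. To make them irredundant we must delete vertices of $I^\star\setminus R(I^\star,i)$ that dominate all their candidate private neighbours.

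Here I would again use quasi-simplicity. The possible private edges of a vertex of $K$ or of $v_{i+1}$ are traces $N_{i+1}[u]$ with $u\in N_{i+1}[v_{i+1}]$ (for $K$) or $u\in N_{i+1}[v_{i+1}]$ (for $v_{i+1}$). These $u$ are pairwise comparable, so their neighbourhoods form a chain. A private edge is therefore obtained by fixing a witness $u$, which gives $O(n)$ choices (or $O(n^2)$ if we choose independently for $v_{i+1}$ and the elements of $K$). Fixing witnesses forces the deletion of exactly those vertices of $I^\star\setminus R(I^\star,i)$ lying in the chosen witness edges, which yields a forced set $X$ once the witnesses are fixed.

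Each candidate $Z$ is then tested in polynomial time. We check that $Z$ is irredundant in $\H_{i+1}$, and that it is maximal by verifying, for every $y\in V_{i+1}\setminus Z$, that $Z\cup\{y\}$ is redundant. Candidates that pass are kept, with duplicates removed. This gives $O(n^2)$ choices of $K$ times $O(n)$ witness choices, matching the $O(n^3)$ bound.

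The main obstacle is making the witness step tight enough to stay within $O(n^3)$ rather than $O(n^4)$. Choosing witnesses independently for $v_{i+1}$ and for up to two elements of $K$ naively gives more choices. I would first try to exploit the chain structure: once $v_{i+1}\in Z$, a vertex of $K$ in $N(v_{i+1})$ cannot be self-private. Maximality should then force the deleted set to be a prefix or suffix of the chain of neighbourhoods, so a single threshold parameter determines $X$. If that fails, the argument still gives a polynomial (if weaker) bound, which suffices for the subsequent polynomial-delay algorithm.
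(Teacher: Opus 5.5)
\textbf{Gap 1: your count does not reach $O(n^3)$.} Your skeleton is the paper's. You bound $K=Z\cap R(I^{\star},i)$ via Lemma~\ref{lemma:MIRR:charac_strongly_orderable}, fix one private edge for each vertex of $Z$ that needs a new one, use maximality to force the deleted part of $I^{\star}\setminus R(I^{\star},i)$ to be its intersection with those edges, and test each candidate. You depart from the paper in two places.
\begin{itemize}
\item You ignore that the second item bounds all of $R(I^{\star},i)\cap N^2_{i+1}(v_{i+1})$. So you count $O(n^2)$ sets $K$ instead of $O(n)$.
\item You also fix a witness edge for $v_{i+1}$, which the paper omits. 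You are right to. For the path with edges $uw,wv$ ordered $w,v,u$ and $I^{\star}=\{w\}$, we have $R(I^{\star},1)=\emptyset$. The paper's only candidate is then $\{w,v\}$, which is redundant, so it misses the child $\{v\}$ and hence the solution $\{u,v\}$.
\end{itemize}
With that witness, however, the count is no longer $O(n^3)$. Independent witnesses for $v_{i+1}$ and the two kept vertices give $O(n^4)$, even with the paper's $O(n)$ count of $K$. Your chain argument does not fix this. Once $v_{i+1}\in Z$, a kept vertex $k$ can only be private on a trace $N[u]\cap V_{i+1}$ with $u\in N[k]\setminus N[v_{i+1}]$, not $u\in N_{i+1}[v_{i+1}]$. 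Comparability of the neighbours of $v_{i+1}$ says nothing about such $u$, so the single-threshold claim is unsupported. At best you prove a weaker polynomial bound.

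\textbf{Gap 2: the lemma you build on, and the statement itself, are false.} This problem is shared with the paper. Edges of $\H_{i+1}$ are traces $N[u]\cap V_{i+1}$ for \emph{every} $u\in V(G)$, including vertices $u$ after $v_{i+1}$. Quasi-simplicity of $v_{i+1}$ in $G[V_{i+1}]$ gives no control over these.

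Take the tree with edges $vx_j$, $x_ju_j$, $u_ja_j$ for $j\in[k]$, plus $vw$, ordered $a_1,\dots,a_k,x_1,\dots,x_k,v,u_1,\dots,u_k,w$. This is a quasi-simple elimination ordering.
\begin{itemize}
\item Let $v_{i+1}=v$, so $i=2k$, and let $I^{\star}=V_i$. Then $I^{\star}\in\maxirr(\H_i)$, since every singleton of $V_i$ is an edge of $\H_i$.
\item One checks that $I^{\star}\cup\{v\}$ is redundant and $R(I^{\star},i)=\{x_1,\dots,x_k\}$.
\item For every $J\subseteq[k]$, the set $\{v\}\cup\{x_j\}_{j\in J}\cup\{a_j\}_{j\notin J}$ is a maximal irredundant set of $\H_{i+1}$. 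Its private edges are $\{v\}$ (from $N[w]$), $\{x_j,a_j\}$ (from $N[u_j]$) and $\{a_j\}$ (from $N[a_j]$).
\item Each such set is therefore an extension, and in fact a child of $I^{\star}$.
\end{itemize}
This gives $2^k$ extensions with $n=3k+2$, and $|R(I^{\star},i)\cap N_{i+1}(v_{i+1})\cap Z|=|J|$ is unbounded. So the first item of Lemma~\ref{lemma:MIRR:charac_strongly_orderable} fails, the $O(n^3)$ bound fails, and your fallback claim of ``some polynomial bound'' fails too. A correct argument would have to control private edges coming from vertices outside $V_{i+1}$, which neither your proposal nor the paper's proof does.
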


\begin{proof}
    By \Cref{lemma:MIRR:charac_strongly_orderable} each extension $Z$ intersects $R(I^\star,i)$ on at most two vertices: one possibly being the unique vertex $x$ in $R(I^\star,i)\cap N_{i+1}^2(v_{i+1})$ if it exists, and the other vertex $y$ being among those in $R(I^\star,i)\cap N_{i+1}(v_{i+1})$, if it exists.
    These vertices $x$ and $y$ have at least one private edge each.
    Note that all other vertices in $Z$ are in $I^\star\setminus R(I^\star,i)$, hence have private edges with respect to $I^\star$.
    So the only reason for vertices in $I^\star\setminus R(I^\star,i)$ not to be included in $Z$ is because they participate to intersect all private edges of $x$ or $y$; otherwise they could be added to $Z$, contradicting its maximality.
    We conclude that the set of all extensions $Z$ can be found by the following procedure.
    First we decide whether to select the vertex $x\in R(I^\star,i)\cap N_{i+1}^2(v_{i+1})$, if it exists, and then select at most one, possibly none, vertex $y \in R(I^\star,i)\cap N_{i+1}(v_{i+1})$.
    For each of the at most two selected vertices \(x\) and \(y\), we choose one of their incident edges $E_x \in \inc_{i+1}(x)$ and $E_y \in \inc_{i+1}(y)$ to become their respective private edges.
    The first two steps amount to removing $R(I^\star,i)$ from $I^\star$ except for the possibly selected vertices \(x\) and \(y\).
    The second step is to remove $(E_x \setminus \{x\}) \cup (E_y \setminus \{y\})$ from $I^\star$.
    For each obtained set we check whether it is a maximal irredundant set and output it if this is the case.
    Note that the total number of different selections is bounded by $O(n^3)$ and that this procedures indeed runs in polynomial time in the number of vertices.
\end{proof}

\iflongelse{We are ready to show}{We get} that generating the children of a partial solution can be done with polynomial delay and space, by enumerating all extensions.

\begin{lemma}
    \label{lemma:MIRR_child_generation_algorithm}
    There is an algorithm that generates \(\children(I^{\star}, i)\) in polynomial time.
	%There is an algorithm that generates \(\children(I^{\star}, i)\) with \(O(n^{5})\) delay and \(O(n^{2})\) space.

	\begin{proof}
        By Lemma~\ref{lemma:MIRR_parent_function}, two cases arise.
        Either \(I^{\star}\cup \{v_{i+1}\}\in \maxirr(\H_{i+1})\), in which case we output it and are done with the enumeration, or \(I^{\star} \in \maxirr(\H_{i+1})\), in which case we output it and are left with the generation of the non-trivial children of $I^\star$.
        %Either \(I^{\star} \in \maxirr(\H_{i+1})\), in which case we output it and are left with the generation of non-trivial children, or \(I^{\star}\cup \{v_{i+1}\}\in \maxirr(\H_{i+1})\), in which case we output it and are done with the enumeration.
        To enumerate them, we start enumerating all extensions in polynomial time using \Cref{lem:MIRR:bounded_number_of_candidate}.
        For every extension, we check whether it is a child of $I^{\star}$ by running the $\parent$ procedure and output it if it is positive. 
        % Since \(|Z| \leq 2\) there are \(O(n^{2})\) ways of choosing \(Z\) and \(O(n^{2})\) ways of selecting a pair of private hyperedges for the elements of \(Z\).
        % Moreover, verifying whether $I$ is a maximal irredundant set and computing the parent relation can be done in polynomial time.
        As every step takes polynomial time and since $I^\star$ has a polynomial number of children by \Cref{lem:MIRR:bounded_number_of_candidate}, we obtain the desired result.
	\end{proof}
\end{lemma}

\iflongelse{
We conclude to \Cref{thm:MIRR:strongly_orderable} that we restate here as a corollary of~\Cref{proposition:MIRR_generation_algorithm} and \Cref{lemma:MIRR_child_generation_algorithm}. 

\thmMIRRstronglyorderable*
}{
We conclude to \Cref{thm:MIRR:strongly_orderable} as a corollary of~\Cref{proposition:MIRR_generation_algorithm} and \Cref{lemma:MIRR_child_generation_algorithm}. 
}

\subsection{Co-bipartite graphs}

\iflongelse{
In this section, we show that \graphmirrenum{} on co-bipartite graphs is as hard as \graphmirrenum{} on arbitrary graphs.
Our reduction considers the co-bipartite incidence graph $C(\N(G))$ of the closed-neighborhood hypergraph $\N(G)$ of a given graph $G$; see Figure~\ref{fig:cobip-mirr} for an illustration.
}{
To show that \graphmirrenum{} on co-bipartite graphs is as hard as \graphmirrenum{} on arbitrary graphs, we consider the co-bipartite incidence graph $C(\N(G))$ of the closed-neighborhood hypergraph $\N(G)$ of a given graph $G$; see Figure~\ref{fig:cobip-mirr} for an illustration.
}
Using the fact that $C(\N(G))$ is symmetric, we prove that listing the maximal irredundant sets of $C(\N(G))$ amounts to list each maximal irredundant set of $G$ twice, plus a polynomial number of other solutions of size 2 spread across the parts of $C(\N(G))$.
\iflongelse{\thmMIRRcobip*}{We refer to the appendix for the details. This proves \Cref{thm:MIRR:cobip}.}

% \begin{theorem} \label{thm:MIRR:cobip}
% There is an output-polynomial time for \graphmirrenum{} on co-bipartite graphs if and only if there is one for \graphmirrenum{} on arbitrary graphs.
% \end{theorem}

\begin{figure}
    \centering
    \includegraphics[width=0.9\linewidth]{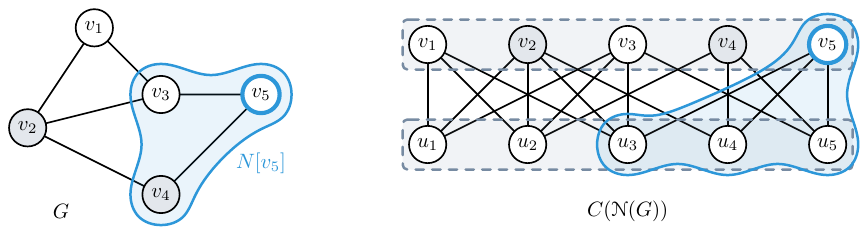}
    \caption{The reduction of Theorem~\ref{thm:MIRR:cobip}. On the left a graph $G$ where shaded vertices indicate a maximal irredundant set.
    On the right the incidence co-bipartite graph $C(\N(G))$ of $\N(G)$. Grey dotted zones indicate the clique bipartition and the shaded vertices form the maximal irredundant set $C(\N(G))$ induced by the maximal irredundant set in pictured in $G$. 
    The vertices boxed in blue illustrate how $N[v_5]$ (in $G$) is encoded in $C(\N(G))$.}
    \label{fig:cobip-mirr}
\end{figure}

\begin{proof}
The if part follows from the fact that an algorithm for \graphmirrenum{} for arbitrary graphs can be used in particular on a co-bipartite graph.
Let us thus show the only if part.
Let $G$ be a non-empty graph with vertex set $V = \{v_1, \dots, v_n\}$.
Let us consider the incidence co-bipartite graph $C \coloneqq C(\N(G))$ of the closed-neighborhood hypergraph $\N(G)$ of $G$.
The two parts of $C$ are $V$ and $U = \{u_1, \dots, u_n\}$ where $u_i$ represents $N[v_i]$.
Note that $C$ is symmetric: we have $u_iv_j\in E(C)$ if and only if $u_jv_i\in E(C)$, for all $i,j\in \{1,\dots,n\}$.
Moreover, the incidences between $U$ and $V$ coincide with those in $G$: we have $u_iv_j\in E(C)$ if and only if $v_iv_j\in E(G)$, for all $i\neq j\in \{1,\dots,n\}$.
For any $I \subseteq V$, let us denote by $U(I) \coloneqq \{u_i \mid v_i \in I\}$ the ``copy'' of $I$ in $U$.

We shall first prove that 
\begin{align*}
\maxirr(C) & = \mathcal{X} \cup \mathcal{I}_1 \cup \mathcal{I}_2 \quad \text{ with }\\
 \mathcal{X}\,& \coloneqq \{X \mid X\in \maxirr(C),\ |X|\leq 2 \}\\
 \mathcal{I}_1 & \coloneqq \{I \mid I \in \maxirr(G),\ |I|\geq 3\} \\ 
 \mathcal{I}_2 & \coloneqq \{U(I) \mid I \in \maxirr(G),\ |I|\geq 3\}
\end{align*}
where the sets $\mathcal{X}$, $\mathcal{I}_1$, and $\mathcal{I}_2$ are purposely refined by cardinality in order to avoid unnecessary technicalities in the proof.

Let us first prove the inclusion $\maxirr(C) \subseteq \mathcal{X} \cup \mathcal{I}_1 \cup \mathcal{I}_2$.
Let $I\in \maxirr(C)$.
Let us further assume that $|I|\geq 3$ as the inclusion trivially holds for smaller $I$. 
Note that $I$ is either a subset of $V$, or a subset of $U$, as otherwise it contains a proper dominating subset, and thus would be redundant.

If $I\subseteq V$, since $|I|\geq 3$, no element of $I$ has a private neighbor in $V$.
Hence $I$ is maximal with the property that its elements have private neighbors in $U$.
Now, since $v_iu_j\in E(C)$ if and only if $v_iv_j\in E(G)$, $I$ is actually maximal with the property that its elements have private neighbors in $G$.
Thus $I\in \mathcal{I}_1$.

The case $I\subseteq U$ leads to the conclusion that $I\in \mathcal{I}_2$ by symmetric arguments.

Let us now prove the other inclusion.
Let $I\in \mathcal{I}_1$.
As $v_iu_j\in E(C)$ if and only if $i =j$ or $v_iv_j\in E(G)$, every element of $I$ has a private neighbor in $U$.
%As $v_iv_j\in E(G)$ if and only if $v_iu_j\in E(C)$, every element of $I$ has a private neighbor in $U$.
So $I$ is irredundant in $C$, and by the maximality of $I$ in $G$, adding any vertex $v\in V\setminus I$ to $I$ makes an element of $I \cup \{v\}$ redundant with respect to $U$.
Adding a vertex $u$ of $U$ to $I$ would make all vertices of $I$ redundant with respect to $U$, since $U$ is a clique.
%The same is true when adding a vertex of $U$ to $I$, since $U$ is a clique, and that vertex would see these private %neighbors.
Now, since $|I|\geq 3$, no element of $I$ has a private in $V$.
We conclude that $I$ is a maximal irredundant set of $C$.

The case of $I\in \mathcal{I}_2$ leads to $I\in \maxirr(C)$ by symmetric arguments.

We are now ready to conclude the proof.
Assume there is an algorithm $\algoA$ solving \graphmirrenum{} on any co-bipartite graph $C$ in output-polynomial time.
We give an output-polynomial time algorithm solving \graphmirrenum{} for any graph $G$.
First the algorithm outputs all members of $\maxirr(G)$ of size at most 2 by ranging over subsets of size 2 and testing if they define a maximal irredundant set of $G$.
Then, the algorithm builds $C$, runs $\algoA$ on $C$, and outputs only those members of $\maxirr(G)$ of size at least 3. 
These steps are done in a time which is polynomial in the sizes of $C$ and $\maxirr(C)$.
Due to the above equality, each solution will be correctly output.
Again using this equality, we have $\vert \maxirr(C) \vert \leq 2 \times \vert \maxirr(G) \vert + \vert V(G) \vert^2$.
Therefore, the whole algorithm runs in polynomial time in the sizes of $G$ and $\maxirr(G)$ as expected.
This concludes the proof.
\end{proof}

\iflongelse{%
We note that the reduction in the proof of Theorem~\ref{thm:MIRR:cobip} actually preserves incremental polynomial time, since the procedure may list linearly many solutions that correspond to already obtained solutions before outputting a new solution.}{}

\iflongelse{
\subsection{Split graphs}

Finally, for completeness, we include a proof of Theorem~\ref{thm:MIRR:split} which was initially claimed by Uno in \cite{bodlaender2015open}.

\begin{lemma} \label{lem:MIRR-split-MDS}
The maximal irredundant sets of a split graph $G$ are precisely its minimal dominating sets.
\end{lemma}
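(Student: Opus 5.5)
We show two inclusions. The inclusion ``minimal dominating $\Rightarrow$ maximal irredundant'' holds in every graph, as recalled in the introduction. Let $D$ be a minimal dominating set. By minimality, each $x\in D$ has a private neighbor, since otherwise $D\setminus\{x\}$ would still dominate. Hence $D$ is irredundant. For maximality, take any $v\notin D$. Every vertex of $N[v]$ is dominated by $D$, so $N[y]\cap(D\cup\{v\})\supsetneq\{v\}$ for all $y\in N[v]$. Thus $v$ has no private neighbor in $D\cup\{v\}$, and $D$ is maximal irredundant.

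The substantive direction is that, in a split graph $G$ with clique $K$ and independent set $S$, every maximal irredundant set $I$ is dominating. A dominating irredundant set is minimal dominating, because removing $x$ undominates its private neighbor. So we suppose, for contradiction, that some vertex $w$ is not dominated by $I$ and derive a contradiction with maximality. The plan is to exhibit a vertex $z\notin I$ such that $I\cup\{z\}$ is still irredundant.

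We first try $z=w$. Then $w$ is self-private in $I\cup\{w\}$, since $N[w]\cap I=\emptyset$. Consider any $x\in I$ with private neighbor $p$. The only danger is that $p\in N[w]$, i.e., $w$ is adjacent to the unique private neighbor(s) of some $x$.

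We split on where $w$ lies.

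\textbf{Case $w\in S$.} Then $N(w)\subseteq K$, and since $w$ is undominated, no vertex of $N(w)$ lies in $I$. Moreover $I\cap K=\emptyset$: a vertex of $I\cap K$ adjacent to $w$ would dominate it, but a vertex of $I\cap K$ not adjacent to $w$ does not immediately give a contradiction, so this needs care. If $I\cap K=\emptyset$, then $I\subseteq S$ and every $x\in I$ is self-private. These stay self-private after adding $w$, because $S$ is independent. So $I\cup\{w\}$ is irredundant, a contradiction.

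If $I\cap K\neq\emptyset$, we instead pick $z\in N(w)\subseteq K$, or handle the bad private neighbors directly. A vertex $x$ whose private neighbors all lie in $N(w)\subseteq K$ must belong to $K$, since a vertex of $S$ is self-private. But $|I\cap K|\ge 2$ would kill private neighbors inside $K$, so the bookkeeping reduces to a few subcases according to $|I\cap K|\in\{1,\ge2\}$.

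\textbf{Case $w\in K$.} Then $I\cap K=\emptyset$, so $I\subseteq S$ and all elements of $I$ are self-private. Adding $w$ keeps them self-private, because a vertex's own closed neighborhood meets $I\cup\{w\}$ only in itself plus possibly $w$. This is fine only if $w\notin N(x)$, which holds since $w$ is undominated. Thus $w$ can be added, a contradiction.

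The main obstacle is the subcase $w\in S$ with $I\cap K\neq\emptyset$. The idea I would pursue there: if $|I\cap K|\ge2$, then the private neighbors of the clique elements lie in $S$. Adding $w\in S$ cannot destroy them, because $N[w]\cap S=\{w\}$ and $w$ is not a private neighbor of anyone, being undominated. Elements of $I\cap S$ are self-private and unaffected. If $|I\cap K|=1$, say $I\cap K=\{k\}$, then $w$ is nonadjacent to $k$. So $w\notin N[k]$, and $w$ shares no neighbor with $k$'s private neighbor unless that neighbor is in $K\cap N(w)$. In that last situation, $k$ also has a private neighbor in $S$ or can be swapped, and I expect to resolve this by choosing a private neighbor carefully or by adding a vertex of $N(w)$ instead.

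The consequence for the theorem then follows by running the linear-delay minimal dominating set enumeration of \cite{kante2014enumeration} on split graphs.
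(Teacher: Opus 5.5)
\textbf{Gap: the subcase $I\cap K=\{k\}$ is not closed.} Your first direction is correct. Your strategy for the second direction (add an undominated vertex $w$ and show $I\cup\{w\}$ stays irredundant) is the right one. The cases $w\in K$, and $w\in S$ with $I\cap K=\emptyset$ or $|I\cap K|\ge 2$, go through. The subcase $w\in S$ with $I\cap K=\{k\}$, which you flag as the main obstacle, is left open, and your suggested patches fail for an arbitrary split partition $(K,S)$. First, $k$ need not have any neighbor in $S$, so ``$k$ also has a private neighbor in $S$'' can be false. Second, adding some $z\in N(w)$ instead can destroy $k$'s privacy: take $K=\{k,p\}$, $S=\{w\}$, $N(w)=\{p\}$ and $I=\{k\}$; then $\{k,p\}$ is redundant. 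You also use, without proof, that every element of $I\cap S$ is self-private, in the $|I\cap K|\ge 2$ case and in your claim that a bad $x$ must lie in $K$.

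\textbf{The missing observation settles both points.} Suppose $s\in I\cap S$ had a neighbor $c\in I$. Then $c\in K$, and every vertex of $N[s]\subseteq\{s\}\cup K$ lies in $N[c]$, so $s$ would have no private neighbor. Hence no vertex of $I\cap S$ has a neighbor in $I$. In particular $N[k]\cap I=\{k\}$, so $k$ is self-private. Since $w$ is undominated, $w\notin N[k]$, so $N[k]\cap(I\cup\{w\})=\{k\}$. Therefore $I\cup\{w\}$ is irredundant, which is the desired contradiction.

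\textbf{Comparison with the paper.} The paper avoids this case analysis by first choosing the partition with $S$ maximum, so every clique vertex has a neighbor in $S$. Then every element of $I\cap K$ has a private neighbor in $S$, which adding a vertex of $S$ cannot destroy. Also, an undominated clique vertex then has an undominated neighbor in $S$, so only $w\in S$ needs handling. Once you add the observation above, your route handles $w\in K$ directly and needs no normalization of the partition.
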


\begin{proof}
Recall that in any graph, every minimal dominating set is a maximal irredundant set; see Section~\ref{section:preliminaries}.
We show that the converse is true in split graphs.
Let $G$ be a split graph of clique-independent set partition $(C,S)$, and $I\in \maxirr(G)$.
Let us assume without loss of generality that $S$ is maximized in this partition, i.e., every element of $C$ has a neighbor in $S$.
Note that for any $y\in I\cap S$, we have that $I\cap N(y)=\emptyset$, as otherwise $y$ is redundant.
Consequently, every element of $I\cap S$ is self-private, and every element of $I\cap C$ has a private neighbor in $S$.

Suppose toward a contradiction that $I$ is not a dominating set.
If a vertex $y$ of the independent set is not dominated, then $y$ is self-private with respect to $I\cup \{y\}$.
Moreover, by the above discussion, every other vertex in $I$ either is self-private if it belongs to $S$, or has a private neighbor in $S$. 
So $I\cup \{y\}$ is irredundant, contradicting the maximality of $I$.
On the other hand, if a vertex $x$ of the clique is not dominated, then it has a neighbor $y$ in the independent set that is not dominated.
Conducting the same argument as before, $I\cup \{y\}$ is irredundant, a contradiction.
\end{proof}

We conclude to Theorem~\ref{thm:MIRR:split} using a linear delay and space algorithm for minimal dominating sets enumeration in split graphs, as given in \cite{kante2014enumeration}.
}{}

\section{Minimal redundant sets}
\label{section:MRED}

In this section, we investigate the complexity of \graphmredenum{}.
We first propose a polynomial-delay algorithm for the class of $(C_3, C_5, C_6, C_8)$-free graphs.
Recall that this class contains chordal bipartite graphs, which are precisely $(C_3, C_{\geq 5})$-free graphs \cite{golumbic1978perfect}.
\iflongelse{Then, we prove that the problem is intractable for co-bipartite and split graphs.}{Then, we prove that the problem is intractable for co-bipartite, and similar arguments (deferred to the appendix) show the same for split graphs.}

\subsection{Graphs excluding small cycles}

In this section, we characterize the minimal redundant sets of graphs excluding small cycles.
Our characterization relies on the number of redundant vertices in a minimal redundant set and involves minimal red-blue dominating sets within a ball of radius~2. 
This yields a polynomial-delay algorithm for \graphmredenum{} on $(C_3, C_5, C_6, C_8)$-free graphs---which generalize chordal bipartite graphs---relying on existing results from the literature on red-blue domination, and an output quasi-polynomial-time algorithm on $(C_3, C_5, C_6)$-free graphs.

The remainder of the section is organized as follows.
We start by stating preliminary properties in general graphs in \Cref{sec:MRED:general-prop}, and then turn to graphs with no small cycles. 
We prove properties for these graphs in \Cref{sec:MRED:cycle-free-prop}, provide the aforementioned characterization of minimal redundant sets in \Cref{sec:MRED:cycle-free-characterization}, and describe the algorithm in \Cref{sec:MRED:cycle-free-algorithm}.

\subsubsection{Preliminary properties in graphs}\label{sec:MRED:general-prop}

If $R$ is a redundant set, we denote by \(\red(R) \coloneqq \{x \in R \mid \priv(x, R) = \emptyset\}\) the set of its redundant vertices.
The next lemma states that in a minimal redundant set $R$ with redundant vertex $x$, any vertex other than $x$ prevents $x$ from having a dedicated private neighbor.

% \jctodo{If I understand correctly the proof of the lemma, we prove that \(\priv(x,R\setminus\{y\}) \subseteq \priv(y,R\setminus\{x\})\). Don't we want to state it explicitly? Cannot it be useful later in the proof? In particular, if $x,y \in \red(R)$, it implies  \(\priv(x,R\setminus\{y\}) = \priv(y,R\setminus\{x\})\).
% } % \odtodo{I don't think we need it}
\begin{lemma}
	\label{lemma:MRED_existence_of_privates}
    Let \(G\) be a graph and \(R \in \minred(G)\).
    Then for any two distinct vertices \(x, y \in R\) such that \(x \in \red(R)\) there exists a vertex \(z \in \priv(x, R \setminus \{y\}) \cap \priv(y, R \setminus \{x\})\).

    \begin{proof}
        By the minimality of \(R\), the vertex \(y\) satisfies \(\priv(x, R \setminus \{y\}) \neq \emptyset\).
        Let \(z \in \priv(x, R \setminus \{y\})\).
        Because \(z\) is a private neighbor of \(x\) in \(R \setminus \{y\}\) and \(x\) is redundant in \(R\), it follows that \(z\) is adjacent to $y$, nonadjacent to \(R \setminus \{x, y\}\), and thus \(z \in \priv(y, R \setminus \{x\})\).
        This concludes the proof.
    \end{proof}
\end{lemma}

The minimality of the redundant sets also implies the following.

\begin{proposition}
    \label{prop:MRED:maximum_distance}
    Let \(G\) be a graph and \(R \in \minred(G)\) be a minimal redundant set.
    Then \(R \subseteq N^2[x]\) for every \(x \in \red(R)\). 
\end{proposition}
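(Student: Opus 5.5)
We argue directly from Lemma~\ref{lemma:MRED_existence_of_privates}. Fix $R \in \minred(G)$ and $x \in \red(R)$, and let $y \in R$ be any vertex. If $y = x$, then trivially $y \in N^2[x]$. We may therefore assume that $y \neq x$. The goal is to exhibit a walk of length at most two from $x$ to $y$.

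Apply Lemma~\ref{lemma:MRED_existence_of_privates} to the pair $x, y$ to obtain a vertex
\[
z \in \priv(x, R \setminus \{y\}) \cap \priv(y, R \setminus \{x\}).
\]
By the definition of private neighbors in graphs, $z \in \priv(x, R\setminus\{y\})$ gives $z \in N[x]$. Likewise, $z \in \priv(y, R\setminus\{x\})$ gives $z \in N[y]$. We now split into cases according to whether $z$ coincides with $x$ or $y$.

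\begin{itemize}
\item If $z = x$, then $x \in N[y]$, so $y$ is adjacent to $x$ (since $y \neq x$).
\item If $z = y$, then $y \in N[x]$, and again $y$ is adjacent to $x$.
\item Otherwise $z \notin \{x, y\}$, and $z$ is a common neighbor of $x$ and $y$. This gives a path $x z y$ of length two.
\end{itemize}

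In every case $\mathrm{dist}(x,y) \le 2$, hence $y \in N^2[x]$. Since $y \in R$ was arbitrary, $R \subseteq N^2[x]$.

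There is no real obstacle here. The only point requiring care is that the private neighbor $z$ may coincide with $x$ or $y$ itself (self-privacy). This is why we phrase membership via closed neighborhoods, $z \in N[x] \cap N[y]$, rather than asserting that $z$ is a common neighbor.
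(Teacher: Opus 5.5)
Your proof is correct. The paper gives no separate proof of this proposition and presents it as an immediate consequence of minimality, right after \Cref{lemma:MRED_existence_of_privates}; your argument, which takes the vertex $z \in N[x] \cap N[y]$ supplied by that lemma (with self-privacy handled via closed neighborhoods), is exactly the intended one.
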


% Consequently, fixing a vertex to be part of a solution also implies that our decisions are limited to a small subgraph around the said vertex, and therefore we consider classes of graphs for which this subgraph has algorithmically interesting properties. \svtodo{not sure this paragraph helps a lot}

\subsubsection{Properties in graphs excluding small cycles}\label{sec:MRED:cycle-free-prop}

\begin{lemma}
    \label{lem:MRED:bipartite-neighborhood}
    Let \(G\) be a \((C_3, C_5)\)-free graph.
    Then for every vertex \(x \in V(G)\), the graph 
    induced by \(N^2[x]\) is bipartite.

    \begin{proof}
        Let \(x \in V(G)\).
        As $G$ is $C_3$-free, $N(x)$ is an independent set.
        Let us suppose toward a contradiction that there are two adjacent vertices $z_{1}, z_{2}$ in \(N^{2}(x)\).
        Let $y_1$ and $y_2$ be neighbors of $z_1$ and $z_2$ in $N(x)$, respectively.
        As $G$ is $C_3$-free, $y_1\neq y_2$.
        But then \(\{z_{1}, y_{1}, x, y_{2}, z_{2}\}\) induces a cycle of length \(5\), a contradiction.
        So \(N^{2}(x)\) induces an independent set and the statement follows.
    \end{proof}
\end{lemma}

In the following, we show that a minimal redundant set containing a redundant vertex $x$ and one of its neighbor $y$ %containing an edge $xy$ 
such that $N(y)\neq \{x\}$ may contain other elements in $N(y)$, or in $N(x)$, but not in both sets at the same time, while it should intersect exactly one if $y$ is redundant.
This is illustrated in Figure~\ref{fig:R-location-C3C5-free}.

\begin{lemma}
    \label{lemma:MRED_cases_two_redundant_vertices}
    Let \(G\) be a \((C_3, C_5)\)-free graph, \(R \in \minred(G)\) be a minimal redundant set, and \(x \in \red(R)\) be a redundant vertex.
    If \(y \in R\) is adjacent to \(x\) and \(N(y)\neq \{x\}\), then at most one of the following holds:
    \begin{itemize}
        \item \(R \cap (N(y) \setminus \{x\}) \neq \emptyset\); or
        \item \(R \cap (N(x) \setminus \{y\}) \neq \emptyset\).
    \end{itemize}
    Moreover, if \(y \in \red(R)\) then exactly one of the items is satisfied.

    \begin{proof}
        By contradiction.
        Assume both items hold.
        The second item implies that \(x\) does not become self-private after removing a vertex from \(R \cap N(x)\).
        From the first item, the vertex \(y\) does not become a private neighbor of \(x\) in \(R \setminus \{y\}\).
        Since \(G\) is $C_3$-free, the vertices \(x\) and \(y\) do not share a common neighbor, and thus \(x \in \red(R \setminus \{y\})\), a contradiction to the minimality of \(R\).

        It remains to show that at least one of the items holds when \(y \in \red(R)\).
        If \(y \in \red(R)\) the set \(R \setminus \{y\}\) minimally dominates \(N[y]\).
        Let $z \in N(y)\setminus \{x\}$ and assume that $z \notin R$, as otherwise the first item holds. Then there exists $w \in R\setminus \{y\}\cap N(z)$. 
        By~\Cref{prop:MRED:maximum_distance}, \(R \subseteq N^{2}[x]\). By~\Cref{lem:MRED:bipartite-neighborhood}, $w \in N(x)$ and since $w \neq y$, the second item holds.
        %So \(R \setminus \{y\}\) must contain at least one element in \(N(x)\setminus \{y\}\), or an element in $N(y)\setminus \{x\}$ as \(N(y)\neq \{x\}\).
        This concludes the proof.
    \end{proof}
\end{lemma}

\begin{figure}
    \centering
    \includegraphics[scale=0.70]{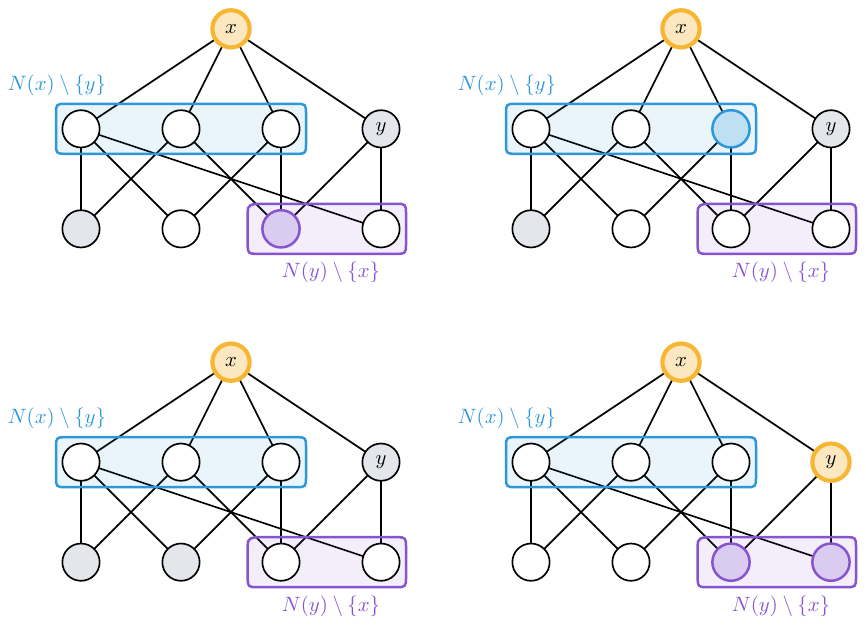}
    \caption{The situations of \Cref{lemma:MRED_cases_two_redundant_vertices}. In each case, a minimal redundant set is indicated by shaded vertices with (yellow) bold vertices being redundant.
    The top-left, top-right, and bottom-left cases illustrate that $R$ intersects at most one of $N(x) \setminus \{y\}$ or $N(y) \setminus \{x\}$. The bottom-right graph illustrates one of the two cases of \Cref{lemma:MRED_redundant_contained_closed_neighborhood_of_x}, where $y$ is also redundant: \(R = N[y]\). 
    %$R \setminus \{x, y\} \subseteq N(y) \setminus \{x\}$.
    The other case, not pictured here, is \(R = N[x]\).}
    % $R \setminus \{x, y\} \subseteq N(x) \setminus \{y\}$.}
    \label{fig:R-location-C3C5-free}
\end{figure}

\begin{lemma}
    \label{lemma:MRED_redundant_contained_closed_neighborhood_of_x}
%    Let \(G\) be a \((C_3, C_5)\)-free graph, \(R \in \minred(G)\) be a minimal redundant set, and \(x \in \red(R)\) a redundant vertex.
%    If there exists a neighbor $y$ of $x$ such that  \(y \in \red(R)\), then either \(R = N[y]\) or \(R = N[x]\).
    Let \(G\) be a \((C_3, C_5)\)-free graph, \(R \in \minred(G)\) be a minimal redundant set, and suppose there exists an edge $xy$ such that  
    \(x, y \in \red(R)\).
    %If there exists a neighbor $y$ of $x$ such that  \(y \in \red(R)\), t
    Then either \(R = N[y]\) or \(R = N[x]\).

    \begin{proof}
        By~\Cref{prop:MRED:maximum_distance}, \(R \subseteq N^{2}[x] \cap N^{2}[y]\).
        By \Cref{lem:MRED:bipartite-neighborhood}, $N^{2}[x] \cap N^{2}[y] = N[x] \cup N[y]$, and thus $N^2(x) \cap R = R \cap (N(y)\setminus \{x\})$. By \Cref{lemma:MRED_cases_two_redundant_vertices}, either \(R \cap (N(x) \setminus \{y\}) \neq \emptyset\) or \(R \cap (N(y) \setminus \{x\}) \neq \emptyset\). Without loss of generality, assume we are in the first case.  
        By \Cref{lemma:MRED_cases_two_redundant_vertices} we derive $N^2(x)\cap R=\emptyset$.
        Since $x$ is redundant, we get that $N[x]\subseteq R$, hence that $N[x]=R$ by minimality of $R$.
    \end{proof}
\end{lemma}

We continue by proving properties on $(C_3, C_5, C_6)$-free graphs.

% \subsubsection{Properties in  \texorpdfstring{\((C_3, C_5, C_6)\)}{\{C\_3, C\_5, C\_6\}}-free graphs}
% \label{subsection:MRED-chordal-bipartite}

\begin{lemma}
	\label{lemma:MRED_distinct_neighborhood_distance_2}
    Let \(G\) be a \((C_3, C_5, C_6)\)-free graph, \(R \in \minred(G)\) a minimal redundant set, \(x \in \red(R)\) a redundant vertex, and \(z_{1}, z_2\) be two distinct vertices in \(R \cap N^{2}(x)\).
    Then \((N(z_{1}) \cap N(z_{2})) \setminus N(x) = \emptyset\). 
	
	\begin{proof}
        Let \(z_{1}, z_{2}\) be two distinct vertices in \(R \cap N^{2}(x)\).
        % Note that the assumptions in particular yield \(R \in \minred(G)\). 
        By~\Cref{lemma:MRED_existence_of_privates}, we derive that for each \(i \in [2]\) we have a vertex \(y_{i} \in R\) such that \(y_{i} \in \priv(x, R \setminus \{z_{i}\})\).
        By~\Cref{lem:MRED:bipartite-neighborhood}, the set \(N^2[x]\) induces a bipartite graph.
        % , $z_1,z_2$ and $y_1,y_2$ are non-adjacent.
        Now assuming there exists a vertex \(w \in (N(z_{1}) \cap N(z_{2}))\setminus N(x)\) we obtain an induced cycle of length \(6\), a contradiction.
        This concludes our proof.
	\end{proof}
\end{lemma}

\begin{lemma}
    \label{lemma:MRED_bounds_redundant_vertices_neighborhoods}
    Let \(G\) be a \((C_3, C_5, C_6)\)-free graph.
    If \(R \in \minred(G)\) and \(x \in \red(R)\), then:
    \begin{itemize}
        \item \(|\red(R) \cap N(x)| \leq 2\); and
        \item \(|\red(R) \cap N^{2}(x)| \leq 1\).
    \end{itemize}

    \begin{proof}
        Suppose, for the sake of contradiction, that \(S \coloneqq \{y_{1}, y_{2}, y_{3}\} \subseteq \red(R) \cap N(x)\).
        By~\Cref{lemma:MRED_existence_of_privates}, for each pair of indices \(i,j \in [3]\), where \(i \neq j\), there exists a vertex \(z_{ij}\) such that \(z_{ij} \in \priv(y_{i}, R \setminus \{y_{j}\}) \cap \priv(y_{j}, R \setminus \{y_{i}\})\).
        In particular, $z_{ij}$ is nonadjacent to the remaining vertex in $S\setminus\{y_i,y_j\}$.
        By Lemma \ref{lem:MRED:bipartite-neighborhood}, the set $S$ and the set of all such $z_{ij}$ are independent.
        We conclude that \(G\) contains \(C_{6}\) as an induced subgraph, a contradiction.
        This concludes the proof for the first item.

        Now assume \(|\red(R) \cap N^{2}(x)| \geq 2\) and let \(z_{1}\) and \(z_{2}\) be two distinct vertices in this set.
        Recall that by~\Cref{lemma:MRED_existence_of_privates} there exists a vertex \(w \in \priv(z_{1}, R \setminus \{z_{2}\}) \cap \priv(z_{2}, R \setminus \{z_{1}\})\).
        This, together with the fact that \(N^{2}(x)\) induces an independent set by~\Cref{lem:MRED:bipartite-neighborhood}, implies that \(w \notin N^2[x]\).
        However, this is not possible because by~\Cref{lemma:MRED_distinct_neighborhood_distance_2} the vertices \(z_{1}\) and \(z_{2}\) do not share a common neighbor outside the neighborhood of \(x\).
        This concludes our proof.
    \end{proof}
\end{lemma}

From~\Cref{lemma:MRED_bounds_redundant_vertices_neighborhoods} we conclude that if \(G\) is a \((C_3, C_5, C_6)\)-free graph, then the maximum number of redundant vertices a minimal redundant set contains is 4.
This bound can be further improved by analyzing what happens when there exists a redundant vertex at distance two from another redundant vertex.

\begin{lemma}
    \label{lemma:MRED:redundant_vertex_distance_2}
    Let \(G\) be a \((C_3, C_5, C_6)\)-free graph.
    If \(R \in \minred(G)\) such that \(x \in \red(R)\) and \(|\red(R) \cap N^{2}(x)| = 1\), then \(|R \cap N(x)| = 1\), and so \(|R| = 3\).

    \begin{proof}
        Let \(\red(R) \cap N^{2}(x) = \{z\}\).
        If \(N(z) \subseteq N(x)\), then because \(z\) is redundant, there exists a vertex \(y \in R \cap N(z)\).
        However, since \(x \in R\) the vertex \(z\) is redundant in \(\{x, y, z\}\) and, by minimality, we derive that $R=\{x, y, z\}$ as desired.
        On the other hand, if \(N(z) \not\subseteq N(x)\), then there exists a vertex \(w \in N(z) \setminus N(x)\).
        Due to~\Cref{lem:MRED:bipartite-neighborhood} the vertex \(w\) is at a distance \(3\) of \(x\) and thus it does not belong to the redundant set \(R\) as a consequence of~\Cref{prop:MRED:maximum_distance}.
        In addition, \Cref{lemma:MRED_distinct_neighborhood_distance_2} establishes that no other vertex in \(R \cap N^{2}(x)\) can be adjacent to \(w\), which makes \(w\) a private neighbor of \(z\) and contradicts our assumption.
        This concludes our proof.
    \end{proof}
\end{lemma}

We now conclude the following.

\begin{corollary}\label{cor:MRED:356:number-redundant-vertices}
    If \(G\) is \((C_3, C_5, C_6)\)-free and $R\in \minred(G)$, then $R$ contains at most 3 redundant vertices.
\end{corollary}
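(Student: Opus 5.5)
Fix a redundant vertex \(x \in \red(R)\); by \Cref{prop:MRED:maximum_distance} every other redundant vertex of \(R\) lies in \(N(x)\) or in \(N^{2}(x)\). We then split according to \(\red(R) \cap N^{2}(x)\), which \Cref{lemma:MRED_bounds_redundant_vertices_neighborhoods} says has at most one element.

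\emph{Case 1: \(|\red(R) \cap N^{2}(x)| = 1\).} \Cref{lemma:MRED:redundant_vertex_distance_2} gives \(|R| = 3\), so \(R\) trivially contains at most three redundant vertices.

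\emph{Case 2: \(\red(R) \cap N^{2}(x) = \emptyset\).} Here \(\red(R) \subseteq \{x\} \cup (\red(R) \cap N(x))\), and \Cref{lemma:MRED_bounds_redundant_vertices_neighborhoods} bounds the second set by two. Hence \(|\red(R)| \leq 3\).

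The only place to be careful is making sure these two cases are exhaustive and that the bound of \Cref{lemma:MRED_bounds_redundant_vertices_neighborhoods} applies to the chosen \(x\). Both hold since \(x\) is an arbitrary redundant vertex of \(R\), so nothing beyond the stated lemmas is needed.
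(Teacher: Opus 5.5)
Your proof is correct and follows the paper's argument. The paper also fixes a redundant vertex \(x\), uses \(R \subseteq N^{2}[x]\) from \Cref{prop:MRED:maximum_distance}, and applies \Cref{lemma:MRED_bounds_redundant_vertices_neighborhoods} to get the crude bound \(1+2+1=4\). It then uses \Cref{lemma:MRED:redundant_vertex_distance_2} to show that a redundant vertex in \(N^{2}(x)\) forces \(|R|=3\), which is exactly your two-case split.
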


The bound in~\Cref{cor:MRED:356:number-redundant-vertices} is tight, as witnessed by 3 consecutive vertices in a cycle of length \(4\).
We end this section with a last property that will be used in the characterization of minimal redundant sets.
% Now that we know the maximum number of redundant vertices in a minimal redundant set of a \((C_3, C_5, C_6)\)-free graph, we can proceed to our characterization of the solutions. 
% We start with the case where \(|\red(R)| = 3\).

\begin{lemma}
    \label{lemma:MRED:two_redundant_neighborhood_of_x}
    Let \(G\) be a \((C_3, C_5, C_6)\)-free graph. 
    If \(R \in \minred(G)\) such that \(x \in \red(R)\) and \(|\red(R) \cap N(x)| = 2\), then \(|R| = 3\).

    \begin{proof}
        Let \(\red(R) \cap N(x) = \{y_{1}, y_{2}\}\) be the pair of redundant vertices adjacent to \(x\).
        We proceed by contradiction.
        Suppose that \(y_{3} \in R \cap N(x) \setminus \{y_{1}, y_{2}\}\).
        It follows from~\Cref{lemma:MRED_existence_of_privates} that for every pair of indices \(i, j \in [3]\), where \(i \neq j\), there exists a vertex \(z_{ij}\) such that \(z_{ij} \in \priv(y_{i}, R \setminus \{y_{j}\})\).
        By~\Cref{lem:MRED:bipartite-neighborhood}, the set \(\{y_{1}, z_{12}, y_{2}, z_{23}, y_{3}, z_{13}\}\) induces a cycle of length \(6\), and we arrive at a contradiction.

        It remains to show that \(R \cap N^{2}(x)\) is empty, and thus suppose that \(z \in R \cap N^{2}(x)\).
        By~\Cref{lem:MRED:bipartite-neighborhood} the set \(N(z) \cap N(y_{i})\) is empty for each \(i \in [2]\), and therefore by~\Cref{lemma:MRED_existence_of_privates} we must have \(z \in \priv(y_{i}, R \setminus \{z\})\).
        This leads to a contradiction, as \(z\) cannot satisfy this property for both vertices, and we conclude our proof.
    \end{proof}
\end{lemma}

\subsubsection{Characterization of redundant sets in graphs with no small cycles}\label{sec:MRED:cycle-free-characterization}

% \subsubsection{Characterization of redundant sets in 
% \texorpdfstring{\((C_3, C_5, C_6)\)}
% {\{C\_3, C\_5, C\_6\}}-free graphs}

In the remainder of this section, we assume $G$ to be a $(C_3, C_5, C_6)$-free graph.
By \Cref{cor:MRED:356:number-redundant-vertices} we know that the number of redundant vertices in a minimal redundant set of $G$ is at most 3.
In the following, we characterize the minimal redundant sets depending on whether their number of redundant vertices is 3, 2, or 1.

Let us recall that any minimal redundant set $R$ contains a redundant vertex $x$, and that by \Cref{lemma:MRED_bounds_redundant_vertices_neighborhoods}, it contains at most two additional redundant vertices at distance 1 from $x$, and at most one at distance 2. 
We derive the following as a corollary of \Cref{lemma:MRED:redundant_vertex_distance_2,lemma:MRED:two_redundant_neighborhood_of_x}.

\begin{corollary}\label{cor:MRED:charac_3_redundants}
    % Let \(G\) be a \((C_3, C_5, C_6)\)-free graph
    If \(R \in \minred(G)\) contains 3 redundant vertices, then \(|R| = 3\).
\end{corollary}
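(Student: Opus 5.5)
Fix a redundant vertex $x \in \red(R)$ and split according to where the other two redundant vertices of $R$ lie, relative to $x$. By \Cref{prop:MRED:maximum_distance}, $R \subseteq N^2[x]$, so every vertex of $\red(R)\setminus\{x\}$ lies in $N(x)$ or in $N^2(x)$. By \Cref{lemma:MRED_bounds_redundant_vertices_neighborhoods}, at most two of them lie in $N(x)$ and at most one lies in $N^2(x)$. Since $|\red(R)\setminus\{x\}| = 2$, exactly one of the following holds: either $|\red(R)\cap N(x)| = 2$ and $\red(R)\cap N^2(x) = \emptyset$, or $|\red(R)\cap N^2(x)| = 1$ (and then exactly one redundant vertex lies in $N(x)$).

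In the first case, \Cref{lemma:MRED:two_redundant_neighborhood_of_x} applies directly to $x$ and yields $|R| = 3$. In the second case, \Cref{lemma:MRED:redundant_vertex_distance_2} applies directly to $x$ and also yields $|R| = 3$. In both cases the hypotheses of the invoked lemma ($G$ is $(C_3,C_5,C_6)$-free, $R \in \minred(G)$, $x \in \red(R)$, together with the relevant count of redundant vertices) are exactly what we have, so the conclusion follows.

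There is no real obstacle: the only point needing care is that the case split is exhaustive. This rests on \Cref{prop:MRED:maximum_distance}, which places every element of $R$ within distance $2$ of $x$, so no redundant vertex can be missed by the two cases.
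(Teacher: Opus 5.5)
Your proposal is correct and follows the paper's own argument: fix $x \in \red(R)$, use \Cref{prop:MRED:maximum_distance} and \Cref{lemma:MRED_bounds_redundant_vertices_neighborhoods} to reduce to the two cases ($2$ redundant neighbours of $x$, or one redundant vertex at distance $2$), and close each case with \Cref{lemma:MRED:two_redundant_neighborhood_of_x} and \Cref{lemma:MRED:redundant_vertex_distance_2} respectively. Your explicit check that the case split is exhaustive is exactly the detail the paper leaves implicit.
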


For minimal redundant sets containing two redundant vertices, we derive the following as a corollary of~\Cref{lemma:MRED:redundant_vertex_distance_2,lemma:MRED_redundant_contained_closed_neighborhood_of_x}.

\begin{corollary}\label{cor:MRED:charac_2_redundants}
    % Let \(G\) be a \((C_3, C_5, C_6)\)-free graph
    If \(R \in \minred(G)\) contains 2 redundant vertices, then either:
    \begin{itemize}
        \item \(R = N[x]\) for some \(x \in \red(R)\); or
        \item \(|R| = 3\).
    \end{itemize}
\end{corollary}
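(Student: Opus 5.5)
Let $\red(R)=\{x,y\}$. I would split according to the distance between the two redundant vertices. By \Cref{prop:MRED:maximum_distance} we have $R\subseteq N^2[x]$, so $y\in N(x)$ or $y\in N^2(x)$. Each case is then handed to one of the earlier lemmas.

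\emph{Case $y\in N^2(x)$.} Here $\red(R)\cap N^2(x)=\{y\}$, which is exactly the hypothesis $|\red(R)\cap N^2(x)|=1$ of \Cref{lemma:MRED:redundant_vertex_distance_2}. That lemma gives $|R|=3$, which is the second item of the statement.

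\emph{Case $y\in N(x)$.} Now $xy$ is an edge with both endpoints in $\red(R)$. Since $G$ is $(C_3,C_5,C_6)$-free, it is in particular $(C_3,C_5)$-free, so \Cref{lemma:MRED_redundant_contained_closed_neighborhood_of_x} applies. It yields $R=N[x]$ or $R=N[y]$, and both $x$ and $y$ lie in $\red(R)$. This is the first item of the statement.

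The only point to check is that the case split is exhaustive, that is, $y\ne x$ and $y\in N^2[x]$. This is immediate from \Cref{prop:MRED:maximum_distance}, so no step is a genuine obstacle. One small thing to note in the second case: \Cref{lemma:MRED_redundant_contained_closed_neighborhood_of_x} does not need the assumption $N(y)\neq\{x\}$ used in \Cref{lemma:MRED_cases_two_redundant_vertices}. If $N(y)=\{x\}$, then $R\cap(N(y)\setminus\{x\})=\emptyset$ trivially, and the argument of that lemma still lands in one of the two outcomes. If needed, I would re-check this degenerate situation directly. There, $y$ redundant with $N[y]=\{x,y\}$ forces $x\in R$ dominating $y$, and the minimality argument of \Cref{lemma:MRED_redundant_contained_closed_neighborhood_of_x} then gives $R=N[x]$ or $R=N[y]=\{x,y\}$.
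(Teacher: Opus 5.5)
Your proof is correct and follows the paper's argument exactly. The paper obtains this corollary by splitting on whether the second redundant vertex lies in $N(x)$ or $N^2(x)$ (exhaustive by Proposition~\ref{prop:MRED:maximum_distance}) and invoking Lemma~\ref{lemma:MRED_redundant_contained_closed_neighborhood_of_x} or Lemma~\ref{lemma:MRED:redundant_vertex_distance_2}, respectively. Your side check of the case $N(y)=\{x\}$ covers a hypothesis mismatch the paper passes over, and it is settled most cleanly as follows: Proposition~\ref{prop:MRED:maximum_distance} applied to $y$ gives $R\subseteq N^2[y]=N[x]$, and then the redundancy of $x$ together with the independence of $N(x)$ forces $N(x)\subseteq R$, hence $R=N[x]$.
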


We are now left with the characterization of minimal redundant sets containing exactly one redundant vertex which we call \(x\).
We distinguish two cases depending on whether they intersect the neighborhood of $x$ on one or more vertices.
These lemmas are better understood with accompanied Figure~\ref{fig:MRED:one-redundant-two-cases}.

\begin{lemma}
    \label{lemma:MRED:charac_1_redundant_1_neighbor}
    Let \(x \in V(G)\) and \(y \in N(x)\).
    The following are equivalent:
    \begin{enumerate}[\normalfont(a)]
        \item\label{item:MRED:y:a} \(R \in \minred(G)\) where \(\red(R) = \{x\}\) and \(N(x) \cap R = \{y\}\).
        \item\label{item:MRED:y:b} \(R = S \cup \{x, y\}\) where: 
        \begin{enumerate}[\normalfont i)]
            \item\label{item:MRED:y:b:i} \(S \subseteq N^{2}(x)\);
            \item\label{item:MRED:y:b:ii} The vertex \(y\) has a private neighbor with respect to \(S \cup \{x\}\); 
            % There exists a vertex \(z^{\star} \in N(y)\) such that \(z^{\star} \notin S\); \svtodo{$y$ has a private neighbor wrt $S \cup x$?}
            \item\label{item:MRED:y:b:iii} For every \(z \in S\) satisfying \(N(z) \subseteq N(x)\) it follows that \(z \notin N(y)\); and
            \item\label{item:MRED:y:b:iv} \(S\) minimally dominates \(N(x) \setminus \{y\}\);
        \end{enumerate}
    \end{enumerate}

    \begin{proof}
        Suppose that Item \eqref{item:MRED:y:a} holds.
        By~\Cref{prop:MRED:maximum_distance} the set \(R\) is contained in \(N^{2}[x]\), proving Item \eqref{item:MRED:y:b:i}.
        Because \(x\) is the only redundant vertex in \(R\) it follows that \(\priv(y, R)\) is non empty, proving Item \eqref{item:MRED:y:b:ii}.
        % Note in particular that \(N(y) \neq \{x\}\).
        Moreover, we derive that each vertex \(z \in R \setminus \{x, y\}\) such that \(N(z) \subseteq N(x)\) must be self-private, and so in particular \(z \notin N(y)\).
        This proves Item \eqref{item:MRED:y:b:iii}.
        Lastly, \(S = R \setminus \{x, y\}\) minimally dominates \(N(x) \setminus \{y\}\), as if it was not dominating, \(x\) would have a private neighbor, and if it was not minimal with that property, there would exist a set \(S' \subsetneq S\) such that \(S'\) dominates \(N(x) \setminus \{y\}\), a contradiction to the minimality of \(R\).
        This proves Item \eqref{item:MRED:y:b:iv}, hence the first direction.

        Let us now assume that Item \eqref{item:MRED:y:b} holds.
        By Item \eqref{item:MRED:y:b:iv} and the fact that \(y \in R\), the vertex \(x\) is redundant.
        Moreover, by the minimality of $S$, the vertex $x$ gains a private neighbor in any proper subset of $R$ containing $\{x,y\}$.
        The same is true if we remove $y$ from $R$, as $x$ becomes self-private in that case.
        Hence $R$ is minimal with respect to having $x$ redundant, and it remains to argue that $\red(R)=\{x\}$.
        We do this by showing that every element in $R \setminus \{x\}$ has a private neighbor.

        First,
        $y$ has a private neighbor by Item \eqref{item:MRED:y:b:ii}.
        Now, recall that by \Cref{lem:MRED:bipartite-neighborhood} $N^2[x]$ induces a bipartite graph, hence that the set \(S\) is an independent set.
        So the elements of \(S \setminus N(y)\) are self private.
        It remains to consider the vertices \(z \in S \cap N(y)\).
        By Item~\eqref{item:MRED:y:b:iii} the set \(N(z) \setminus N(x)\) is non-empty.
        Moreover, by~\Cref{lemma:MRED_distinct_neighborhood_distance_2} no pair of vertices in \(S\) share a common neighbor outside \(N(x)\), and so every element in the set \(N(z) \setminus N(x)\) is a private neighbor of \(z\).
        We get that $x$ is the only redundant vertex of $R$ as desired.
        This concludes the second direction.
		\end{proof}
\end{lemma}

% \needspace{2cm}

\begin{lemma}
    \label{lemma:MRED:charac_1_redundant_multi_neighbor}
    Let \(x \in V(G)\) and \(Y \subseteq N(x)\) such that \(|Y| \geq 2\).
    The following are equivalent:
    \begin{enumerate}[\normalfont(a)]
        \item\label{item:MRED:Y:a} \(R \in \minred(G)\) where \(\red(R) = \{x\}\) and \(N(x) \cap R = Y\).
        \item\label{item:MRED:Y:b} \(R = S \cup Y \cup \{x\}\) such that: 
        \begin{enumerate}[\normalfont i)]
            \item\label{item:MRED:Y:b:i} \(S \subseteq N^{2}(x)\);
            \item\label{item:MRED:Y:b:ii} \(S \cap N(Y) = \emptyset\);
            \item\label{item:MRED:Y:b:iii} Each vertex \(y \in Y\) has a private neighbor with respect to \(Y \cup \{x\}\);
            % For each \(y \in Y\) the set \(N(y) \setminus N(Y \setminus \{y\})\) is non-empty; and \svtodo{each $y \in Y$ has a private neighbor wrt $Y \cup \{x\}$?}
            \item\label{item:MRED:Y:b:iv} \(S\) minimally dominates \(N(x) \setminus Y\).
        \end{enumerate}
    \end{enumerate}

    \begin{proof}
        Suppose that Item~\eqref{item:MRED:Y:a} holds and let \(S = R \setminus (Y \cup \{x\})\).
        By~\Cref{prop:MRED:maximum_distance} $R\subseteq N^2[x]$ and we obtain Item~\eqref{item:MRED:Y:b:i}.
        Because \(x\) is the only redundant vertex in \(R\), we have $N(y)\neq \{x\}$ for every \(y \in Y\).
        Thus, by \Cref{lemma:MRED_cases_two_redundant_vertices}, since \(|Y| \geq 2\), for every \(y \in Y\) we have that \(R \cap (N(y)\setminus \{x\})=\emptyset\). 
        This establishes Item~\eqref{item:MRED:Y:b:ii}.
        Additionally, because each \(y\) is irredundant and adjacent to \(x\), it follows that \(N(y) \setminus N(Y \setminus \{y\})\) is non-empty and we conclude Item~\eqref{item:MRED:Y:b:iii}.
        %Finally, consider \(S = R \setminus (Y \cup \{x\})\).
        We argue that \(S\) minimally dominates \(N(x) \setminus Y\). 
        If \(S\) was not dominating, then \(x\) would have a private neighbor, contradicting the redundancy of \(x\); and if \(S\) was not minimal, then there would exist a set \(S' \subsetneq S\) that dominates \(N(x) \setminus Y\), and thus \(S' \cup Y \cup \{x\}\) would yield a smaller included redundant set, contradicting the minimality of \(R\).
        This proves Item~\eqref{item:MRED:Y:b:iv} and concludes the first direction.

        Assume that Item~\eqref{item:MRED:Y:b} holds.
        By Item~\eqref{item:MRED:Y:b:iv}, the vertex \(x\) is redundant in the set \(R = S \cup Y \cup \{x\}\).
        This set is minimal with that property as $S$ minimally dominates $N(x)\setminus Y$ by Item~\eqref{item:MRED:Y:b:iv}, and removing any element in $Y$ would provide a private neighbor to $x$, since $Y$ is both an independent set by \Cref{lem:MRED:bipartite-neighborhood}, and nonadjacent to $S$ by \eqref{item:MRED:Y:b:ii}.
        It remains to prove that \(R\) does not contain other redundant vertices, i.e., that every vertex in \(R \setminus \{x\}\) has a private-neighbor.
        Item~\eqref{item:MRED:Y:b:ii} and \Cref{lem:MRED:bipartite-neighborhood} show that the vertices in \(S\) are self-private.
        By Item~\eqref{item:MRED:Y:b:iii} each vertex \(y \in Y\) has a neighbor nonadjacent to vertices in \(Y \setminus \{y\}\), and thus~\Cref{lem:MRED:bipartite-neighborhood} establishes that these neighbors must be private since they cannot be adjacent neither to \(x\) nor to an element in \(S\).
        % We are left with showing that removing any vertex would make \(x\) irredundant.
        % Item~\eqref{item:MRED:Y:b:ii} guarantees that removing any vertex \(y \in Y\) from \(R\) makes \(y\) a private neighbor of \(x\).
        % Since \(S\) is a minimal dominating set of \(N(x) \setminus Y\), deleting any vertex in \(S\) leaves some vertex in \(N(x) \setminus Y\) uncovered; this vertex then becomes a private neighbor of \(x\).
        % Therefore, no proper subset of \(R\) is redundant, and so \(R\) is inclusion-wise minimal.
     \end{proof}
\end{lemma}

\begin{figure}
    \centering
    \includegraphics[scale=0.9]{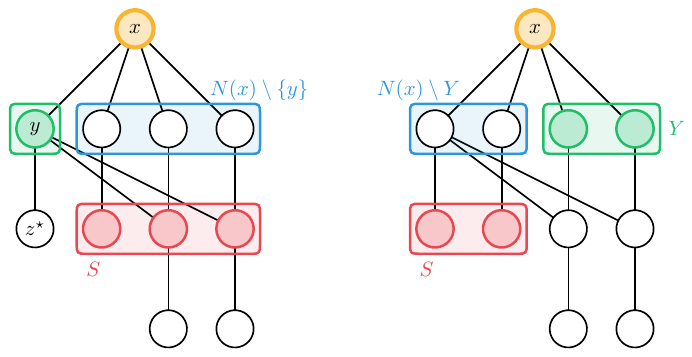}
    \caption{Illustration of Lemma~\ref{lemma:MRED:charac_1_redundant_1_neighbor} on the left, and Lemma~\ref{lemma:MRED:charac_1_redundant_multi_neighbor} on the right.
    Shaded vertices indicate minimal redundant sets, and $x$ (in bold yellow) is the redundant vertex in each case. 
    On the left, $y$ (boxed in green) has a private vertex as well as each vertex of $S$ adjacent to $y$ (the vertices of $S$ are boxed in red) and $S$ minimally dominates $N(x) \setminus \{y\}$ (boxed in blue).
    On the right, $Y$ has at least two elements, $S$ contains no vertex adjacent to $Y$ and minimally dominates $N(x) \setminus Y$.}
    \label{fig:MRED:one-redundant-two-cases}
\end{figure}

\subsubsection{Algorithm for graphs with no small cycles}\label{sec:MRED:cycle-free-algorithm}

%\svtodo[inline]{working on the presentation of this part (not proofs). Oscar (blessed be he) made a back up of the section.}

We now propose an algorithm for listing the minimal redundant sets of a graph with no small cycles.
It runs with polynomial-delay for $(C_3, C_5, C_6, C_8)$-free graphs and incremental quasi-polynomial time for $(C_3, C_5, C_6)$-graphs.

The algorithm follows the breakdown of solutions based on redundant vertices we developed so far.
First, following Corollaries~\ref{cor:MRED:charac_3_redundants} and \ref{cor:MRED:charac_2_redundants}, it enumerates all minimal redundant sets with $2$ or $3$ redundant vertices by running through all possible closed neighborhoods and triplets of vertices, and checking for each of these sets if it is a minimal redundant, all of which in polynomial time. 
Then it lists, for each vertex $x$, the minimal redundant sets whose unique redundant vertex is $x$.
According to Lemma~\ref{lemma:MRED:charac_1_redundant_1_neighbor} and Lemma~\ref{lemma:MRED:charac_1_redundant_multi_neighbor}, these can be further partitioned with respect to the number of neighbors of $x$ in the redundant sets.
We detail our approaches for both cases below.

Let us first consider the computation of all minimal redundant sets containing exactly one neighbor of $x$.
The algorithm first ranges over $N(x)$ and for each choice $y \in N(x)$, selects a neighbor $z^\star$ of $y$ that will be kept as a private neighbor of $y$. 
For simplicity, let us put $Z_y \coloneqq \{z \mid z \in N(y),\ N(z) \subseteq N(x)\}$.
Then, if any, we list the minimal dominating sets $S \subseteq N^2(x) \setminus (\{z^\star\} \cup Z_y)$ of $N(x) \setminus \{y\}$.
According to Lemma~\ref{lemma:MRED:charac_1_redundant_1_neighbor}, the sets $R=S\cup \{x,y\}$ obtained that way are precisely the minimal redundant sets such that $\red(R) = \{x\}$, $N(x) \cap R = \{y\}$. 
% Note that $z^\star \notin R$.
% and $z^\star \notin R$ are precisely the sets $S \cup \{x, y\}$ where $S \subseteq N^2(x) \setminus (\{z^\star\} \cup Z_y)$ is a minimal dominating set of $N(x) \setminus \{y\}$, if there is any.
Also, note that the sets $S$ are the solutions to an instance of red-blue domination where we intend to minimally dominate $N(x) \setminus \{y\}$ (blue) with vertices from $ N^2(x) \setminus (\{z^\star\} \cup Z_y)$ (red).

As for the complexity, the pairs $\{y, z^\star\}$ can be listed in polynomial time, much as the set $N^2(x) \setminus (\{z^\star\} \cup Z_y)$ for each pair.
However, given some $y$, different $z^\star$'s can produce repetitions, as these are chosen to be excluded from the resulting solutions and hence can lead to same sets $S$.
A given solution may thus be repeated $O(n)$ times.

In general graphs red-blue domination can be solved in incremental quasi-polynomial time~\cite{fredman1996complexity,golovach2016enumerating}.
When applying the algorithm for red-blue domination on each $z^\star$ successively, the delay before the next solution thus remains bounded by the number of solutions already obtained times a polynomial factor (the number of repetitions).
We obtain:

\begin{proposition}\label{prop:C3C5C6-algo}
Let $G$ be a $(C_3, C_5, C_6)$-free graph and let $x \in V(G)$.
There is an incremental-quasi-polynomial time algorithm that lists all minimal redundant sets $R$ where $\red(R) = \{x\}$ and $R \cap N(x) = \{y\}$ for some $y \in N(x)$. 
\end{proposition}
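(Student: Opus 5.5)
The plan is to reduce the enumeration to a polynomial number of red-blue domination instances, one for each pair $(y,z^\star)$ with $y \in N(x)$ and $z^\star \in N(y)$. We then run these instances one after the other with the incremental quasi-polynomial algorithm for minimal red-blue dominating sets (hypergraph dualization \cite{fredman1996complexity,golovach2016enumerating}). Duplicates are removed on the fly by storing the solutions already output.

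\textbf{Correctness.} We first argue that the union over all pairs $(y,z^\star)$ of the families
\[\{S \cup \{x,y\} \mid S \subseteq N^2(x)\setminus(\{z^\star\}\cup Z_y) \text{ minimally dominates } N(x)\setminus\{y\}\}\]
equals the target family, using Lemma~\ref{lemma:MRED:charac_1_redundant_1_neighbor}.

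For one direction, take $R$ in the target family and set $S = R\setminus\{x,y\}$. Item~(ii) of the lemma gives a private neighbor $p$ of $y$ with respect to $S\cup\{x\}$. Since $G$ is $C_3$-free, $p \neq x$, because $x$ is not private: $x \in N[x]$. So $p \in N(y)\setminus\{x\}$, and $p \notin S$ since $p$ is private. Take $z^\star \coloneqq p$. Item~(iii) gives $S \cap Z_y = \emptyset$, and items (i) and (iv) give the domination requirement.

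For the converse, let $S$ be a minimal red-blue dominating set in the instance for $(y,z^\star)$. Items (i), (iii) and (iv) hold directly. For item (ii) we need $z^\star$ to be private for $y$ with respect to $S\cup\{x\}$. We have $z^\star \notin S$, and $z^\star$ is not adjacent to $x$ because $G$ is triangle-free. However, $z^\star$ might be adjacent to some vertex of $S$. To handle this, we restrict the red set further, to $N^2(x)\setminus (N[z^\star]\cup Z_y)$. The forward direction still holds, since a private $p$ has no neighbor in $S$.

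We also need $z^\star \neq x$. If $N(y) = \{x\}$, there is no valid $z^\star$, and indeed item (ii) fails in that case, since $y$'s only candidate private neighbors are $y$ itself and $x$, and $y$ is dominated by $x$. So $z^\star$ ranges over $N(y)\setminus\{x\}$, possibly including vertices at distance 2 or 3 from $x$. Each instance is built in polynomial time.

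\textbf{Complexity.} This is the main obstacle, because different pairs may yield the same solution. Each target set $R$ determines $y$, since $R \cap N(x) = \{y\}$. For a fixed $y$, however, $R$ can appear for up to $n$ different choices of $z^\star$.

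We run the instances sequentially and output a set only if it has not been output before, checking this against a stored list. Let $N$ be the number of solutions output so far. Consider the time spent while searching for the next new solution. Every set generated by the dualization subroutine in this period is either new or one of the $N$ old ones. Each old set occurs at most once per instance, and there are at most $n^2$ instances. So at most $n^2(N+1)$ sets are produced, each with incremental quasi-polynomial delay in the instance size plus the number of sets generated so far. This gives a total bound of the form $\mathrm{poly}(n,N)^{\mathrm{polylog}(n+N)}$, which is incremental quasi-polynomial.

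Empty instances, where $N(x)\setminus\{y\}$ cannot be dominated, are detected in polynomial time by checking whether the whole red set dominates. Passing through them therefore adds only a polynomial overhead.
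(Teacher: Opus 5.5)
Your proposal is correct and follows the paper's approach. Like the paper, you build one red-blue domination instance per pair $(y,z^\star)$, with blue set $N(x)\setminus\{y\}$ and red set $N^2(x)\setminus(\{z^\star\}\cup Z_y)$, justify it by Lemma~\ref{lemma:MRED:charac_1_redundant_1_neighbor}, and absorb duplicates at a polynomial factor times the number of solutions found so far. Your extra exclusion of $N[z^\star]$ is harmless but unnecessary: $z^\star\in N(y)\setminus\{x\}$ always lies at distance exactly $2$ from $x$ (not $3$), and $N^2(x)$ is independent by Lemma~\ref{lem:MRED:bipartite-neighborhood}, so the two red sets coincide; your explicit exclusion of $z^\star=x$ is a point the paper leaves implicit.
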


In $(C_6, C_8)$-free bipartite graphs, red-blue domination can be solved with polynomial delay~\cite{kante2018enumerating} when red and blue vertices belong to distinct parts. 
However, our approach does not directly preserve delay due to repetitions.
%Hence, using the red-blue domination algorithm on each possible choice of $z^*$ may not preserve delay due to repetitions.
Yet, we can still preserve polynomial delay overall.
To do so, instead of outputting a previously unseen solution, we store it into a queue, a member of which is popped and printed every $O(n) \cdot f(n)$ time, where $f(n)$ is the delay for red-blue domination.
Storing solutions into the queue and keeping a structure to track repetitions requires however exponential space.
We get:
\begin{proposition}\label{prop:C3C5C6C8-algo}
Let $G$ be a $(C_3, C_5, C_6, C_8)$-free graph and let $x \in V(G)$.
There is a polynomial delay algorithm that lists all minimal redundant sets $R$ where $\red(R) = \{x\}$ and $R \cap N(x) = \{y\}$ for some $y \in N(x)$. 
\end{proposition}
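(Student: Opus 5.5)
The algorithm ranges over all $y \in N(x)$ and all $z^\star \in N(y)$. For each such pair it computes $Z_y$ and the red set $A_{y,z^\star} \coloneqq N^2(x)\setminus(\{z^\star\}\cup Z_y)$. It then runs the polynomial-delay red-blue domination algorithm of~\cite{kante2018enumerating} with red set $A_{y,z^\star}$ and blue set $N(x)\setminus\{y\}$, and maps each returned $S$ to $R = S\cup\{x,y\}$.

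\textbf{Correctness.} This follows from Lemma~\ref{lemma:MRED:charac_1_redundant_1_neighbor}.
\begin{itemize}
\item Suppose $R$ satisfies (a). By item (ii), $y$ has a private neighbor $z^\star$ with respect to $S\cup\{x\}$. If $z^\star = x$ then $x$ would be adjacent to nothing in $S$, which is impossible unless $N(x)=\{y\}$. I treat that degenerate case directly: there, $R=\{x,y\}$ with $S=\emptyset$, and it is found by allowing $z^\star$ to range over $N[y]$. Otherwise $z^\star \in N(y)\setminus(S\cup\{x\})$, so $z^\star\notin S$. Item (iii) gives $S\cap Z_y=\emptyset$, and items (i) and (iv) say that $S$ is a minimal red-blue dominating set for the pair chosen.
\item Conversely, let $S\subseteq A_{y,z^\star}$ minimally dominate $N(x)\setminus\{y\}$. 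We must check that $z^\star$ is private for $y$ with respect to $S\cup\{x\}$. Since $z^\star\in N^2(x)$, it is not adjacent to $x$. Since $N^2(x)$ is independent by Lemma~\ref{lem:MRED:bipartite-neighborhood}, it is not adjacent to $S$, and $z^\star\notin S$. Hence (ii) holds, and (i), (iii), (iv) hold by construction.
\end{itemize}

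\textbf{Applicability of~\cite{kante2018enumerating}.} The subgraph induced by $N(x)\cup N^2(x)$ minus $y$ is bipartite, with $N(x)$ and $N^2(x)$ independent by Lemma~\ref{lem:MRED:bipartite-neighborhood}. It is $(C_6,C_8)$-free as an induced subgraph of $G$. Red and blue vertices lie in distinct parts, so the polynomial-delay algorithm applies, with delay $f(n)$.

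\textbf{Delay (the main obstacle).} A solution $S$ can reappear for different $z^\star$ with the same $y$. Different $y$ give different $R$, since $R\cap N(x)=\{y\}$. I would use the queue scheme described in the text.
\begin{itemize}
\item Process the $O(n^2)$ runs sequentially. Maintain a dictionary of the sets already seen and a FIFO queue of sets not yet output.
\item Every $O(n)\cdot f(n)$ steps of simulation, pop one element of the queue and print it.
\item The key counting fact is that each distinct $R$ is produced at most $|N(y)|\le n$ times. So within any window of $n\cdot f(n)$ simulated steps, either some run emits a new solution, or the windows are spent on duplicates or on the final overhead of runs that yield nothing. Runs yielding nothing cost only $\mathrm{poly}(n)$ each, and there are polynomially many of them.
\item Charging argument: the number of new solutions discovered after $t$ windows is at least $t - \mathrm{poly}(n)$. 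It suffices to prefill a polynomial buffer, i.e.\ to delay the first output by $\mathrm{poly}(n)$, so that the queue is never empty while solutions remain.
\item Once the simulation has finished, the remaining queue is flushed with constant delay.
\end{itemize}
This gives polynomial delay, at the cost of exponential space. Working out this amortization precisely is the step I expect to need the most care.
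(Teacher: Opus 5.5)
You have the paper's algorithm (pairs $(y,z^\star)$, then red-blue domination with red set $N^2(x)\setminus(\{z^\star\}\cup Z_y)$ via Lemma~\ref{lemma:MRED:charac_1_redundant_1_neighbor}) and its queue-plus-dictionary delay argument. Your prefill amortization, based on at most $|N(y)|$ repetitions per solution, is a valid way to make the paper's sketch precise.

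There is, however, a concrete error in the range of $z^\star$. You let $z^\star$ range over $N(y)$, and even over $N[y]$ for a ``degenerate case''. Your converse direction relies on $z^\star\in N^2(x)$, which fails for $z^\star\in\{x,y\}$. For those choices the red set is just $N^2(x)\setminus Z_y$, and nothing forces $y$ to keep a private neighbor.

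Two examples show the algorithm then outputs sets outside the family:
\begin{itemize}
\item If $G$ is the single edge $xy$, the choice $z^\star=x$ gives $S=\emptyset$ and outputs $R=\{x,y\}$, although $\red(R)=\{x,y\}$.
\item Take the $4$-cycle $x,y,z,y'$ with a pendant vertex $w$ at $z$, so $z\notin Z_y$. The choice $z^\star=x$ gives red set $\{z\}$ and blue set $\{y'\}$, hence $S=\{z\}$ and output $\{x,y,z\}$. In this set $y$ has no private neighbor.
\end{itemize}
Your justification of the degenerate case is also wrong. Neither $x$ nor $y$ can ever be a private neighbor of $y$ with respect to $R$, because both contain $x\in R$ in their closed neighborhood together with $y$. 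So $z^\star=x$ never occurs, even when $N(x)=\{y\}$.

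The fix is easy and removes the special case entirely: let $z^\star$ range over $N(y)\setminus\{x\}$ only. By the observation above, every $z\in\priv(y,R)$ lies in this set. Since $G$ is triangle-free, $N(y)\setminus\{x\}\subseteq N^2(x)$, so both directions of your correctness argument go through. In particular, when $N(x)=\{y\}$, the set $\{x,y\}$ is produced exactly when $N(y)\neq\{x\}$, as it should be.

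A minor point: run the red-blue algorithm on $G[A_{y,z^\star}\cup(N(x)\setminus\{y\})]$ rather than on $G[N(x)\cup N^2(x)]-y$. That way red and blue are exactly the two sides, with no uncolored vertices. The rest of your argument, including the delay analysis, is fine.
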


We now turn our attention to minimal redundant sets containing at least two neighbors of $x$, corresponding to Lemma~\ref{lemma:MRED:charac_1_redundant_multi_neighbor}.
Two steps are needed.
The first step consists in identifying subsets of $N(x)$ that can be extended into a minimal redundant sets. 
Given $Y \subseteq N(x)$, $|Y| \geq 2$, we say that $Y$ is \emph{extendable} with respect to $x$ if there exists a minimal redundant set $R$ with $\red(R) = \{x\}$ and $R \cap N(x) = Y$.
Below, we characterize extendable sets and show that they constitute an independence set system (i.e., a family of sets containing the empty set and being closed by subset), disregarding singletons and the empty set.

\begin{lemma} \label{lem:MRED:extendable_characterization}
    Let \(x \in V(G)\) be a vertex and \(Y \subseteq N(x)\) be a set such that \(|Y| \geq 2\).
    Then, the set \(Y\) is extendable with respect to \(x\) if and only if the two conditions hold:
    \begin{enumerate}[\normalfont (a)]
        \item\label{item:MRED:extendable:a} Each $y \in Y$ has a private neighbor with respect to $Y \cup \{x\}$; and
        \item\label{item:MRED:extendable:b} The set $N^2(x) \setminus N(Y)$ dominates $N(x) \setminus Y$.
    \end{enumerate}

    \begin{proof}
        Suppose \(Y\) is extendable.
        Then, there is a set \(R \in \minred(G)\) such that \(R \cap N(x) = Y\) and \(x \in \red(R)\) is the unique redundant vertex.
        
        By~\Cref{lemma:MRED:charac_1_redundant_multi_neighbor}, the set \(R\) can be decomposed into the union \(R = S \cup Y \cup \{x\}\) such that \(S\) and \(Y\) satisfy Conditions~\eqref{item:MRED:Y:b:i}--~\eqref{item:MRED:Y:b:iv} of \Cref{lemma:MRED:charac_1_redundant_multi_neighbor}. By Conditions~\eqref{item:MRED:Y:b:i}, \eqref{item:MRED:Y:b:ii}, and \eqref{item:MRED:Y:b:iv}, \(S \subseteq N^{2}(x) \setminus N(Y)\) minimally dominates \(N(x) \setminus Y\), and thus Item~\eqref{item:MRED:extendable:b} holds. By Condition~\eqref{item:MRED:Y:b:iii},
        each vertex in \(Y\) has a private neighbor with respect to \(Y \cup \{x\}\), and we conclude Item~\eqref{item:MRED:extendable:a} holds, hence completing the first direction of the proof.
        
        % By definition of \(R\), every vertex \(y \in Y\) has a private neighbor---that is, the set \(\priv(y, R)\) is non-empty.
        % Consequently, Item~\eqref{item:MRED:extendable:a} holds.
        % Since \(x\) is redundant, its neighborhood \(N(x)\) is dominated.
        % Moreover, from the equality \(R \cap N(x) = Y\) no vertex in \(N(x) \setminus Y\) belongs to \(R\).
        % Recall that~\Cref{lem:MRED:bipartite-neighborhood} implies that \(N(x)\) is an independent set and that~\Cref{lemma:MRED_cases_two_redundant_vertices} ensures that \(N(Y) \cap R\) is empty.
        % Therefore, each vertex in \(N(x) \setminus Y\) is dominated by a vertex in \(N^{2}(x) \setminus N(Y)\).
        % Hence Item~\eqref{item:MRED:extendable:b} holds, completing the first direction of the proof.

        Assume that both conditions~\eqref{item:MRED:extendable:a} and~\eqref{item:MRED:extendable:b} hold.
        Item~\eqref{item:MRED:extendable:b} establishes that \(N^{2}(x) \setminus N(Y)\) dominates \(N(x) \setminus Y\).
        Consequently, there exists an inclusion-wise minimal set \(D \subseteq N^{2}(x) \setminus N(Y)\) that dominates \(N(x) \setminus Y\).
        Let \(R = Y \cup D \cup \{x\}\).
        
        Notice that \(D \cap N(Y)\) is empty; otherwise, there would exist a vertex \(y \in Y\) such that its removal from \(R\) would maintain \(x\) redundant by our assumption on the cardinality of \(Y\).
        Therefore, since \(|Y| \geq 2\) and \(D \cap N(Y) = \emptyset\), we obtain by~\Cref{lemma:MRED:charac_1_redundant_multi_neighbor} that the set \(R\) is a minimal redundant set that satisfies the desired properties.
        This concludes our proof.
        % By Item~\eqref{item:MRED:extendable:a} each vertex in \(Y\) has a private neighbor with respect to \(R\).
        % Furthermore, the definition of \(D\) tells us that every vertex in \(D\) are self-private.
        % Thus, the vertex \(x\) is the only redundant set in \(R\).
        % It remains to show the set is inclusion-wise minimal.
        % Due to the minimality of \(D\), removing any vertex from \(D\) would turn a vertex in \(N(x) \setminus Y\) into a private neighbor of \(x\); and because \(D \cap N(Y)\) is empty, removing a vertex \(y \in Y\) from the set makes \(y\) a private neighbor of \(x\).
        % Hence, the set \(R\) is inclusion-wise minimal and we conclude our proof.
    \end{proof}
\end{lemma}

\begin{lemma}
    \label{lemma:MRED_x-extendable_sets}
    Let \(x \in V(G)\) be a vertex.
    If \(Y^{\star} \subseteq N(x)\) is extendable with respect to \(x\), then every subset \(Y \subsetneq Y^{\star}\) satisfying \(|Y| \geq 2\) is also extendable with respect to \(x\).

    \begin{proof}
        Let \(Y^{\star}\) be an extendable set and \(Y \subsetneq Y^{\star}\) be a proper non-empty subset of \(Y^{\star}\).
        By~\Cref{lem:MRED:extendable_characterization}, each vertex \(y \in Y^{\star}\) has a private neighbor \(z_{y} \in N(y)\).
        Consequently, the vertices in \(Y^{\star} \setminus Y\) can be dominated by these private neighbors, hence by \(N^{2}(x) \setminus N(Y)\). By~\Cref{lem:MRED:extendable_characterization} since $|Y|\geq 2$ we conclude that \(Y\) is an extendable set.
    \end{proof}
\end{lemma}

Let $Y$ be an extendable set.
Following Lemmas~\ref{lemma:MRED:charac_1_redundant_multi_neighbor} and \ref{lem:MRED:extendable_characterization}, the minimal redundant sets $R$ such that $\red(R) = \{x\}$ and $R \cap N(x) = Y$ are precisely those of the form $S \cup Y \cup \{x\}$ where $Y$ is extendable and $S \subseteq N^2(x) \setminus N(Y)$ minimally dominates $N^2(x) \setminus N(Y)$.
Again, this can be framed as red-blue domination: we need to minimally dominate $N(x) \setminus Y$ (blue) with vertices from $N^2(x) \setminus N(Y)$ (red).
Therefore, the algorithm for this part lists all extendable sets $Y$ using an algorithm to enumerate all members of an independence set system, discarding singletons and the empty set, and for each such $Y$ solves the corresponding instance of red-blue domination.
Note that, as long as the algorithm for red-blue domination does not produce repetitions, this algorithm will not either, as extendable $Y$'s partition the solutions to be enumerated.

As for complexity, observe first that whether a set is extendable can be tested in polynomial time.
Given that the members of an independence set system can be enumerated with polynomial delay provided they can be recognized in polynomial time (see, e.g., \cite{kante2014enumeration} for an example), we derive the following propositions, mimicking Propositions \ref{prop:C3C5C6-algo} and \ref{prop:C3C5C6C8-algo}:

\begin{proposition}
Let $G$ be a $(C_3, C_5, C_6)$-free graph and let $x \in V(G)$.
There is an incremental-quasi-polynomial time algorithm that lists all minimal redundant sets $R$ where $\red(R) = \{x\}$ and $R \cap N(x) = Y$ for some $Y \subseteq N(x)$ and $|Y| \geq 2$.
\end{proposition}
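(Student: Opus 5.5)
The plan is a two-level enumeration. The outer level lists the extendable sets; the inner level runs an incremental-quasi-polynomial red-blue domination algorithm for each of them.

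First, fix $x$. By Lemma~\ref{lemma:MRED:charac_1_redundant_multi_neighbor} and Lemma~\ref{lem:MRED:extendable_characterization}, the target family is the disjoint union, over all extendable $Y \subseteq N(x)$ with $|Y|\geq 2$, of the sets $S\cup Y\cup\{x\}$. Here $S \subseteq N^2(x)\setminus N(Y)$ ranges over the minimal sets dominating $N(x)\setminus Y$. The union is disjoint because $R\cap N(x)=Y$ determines $Y$. For a fixed $Y$, the map $S\mapsto S\cup Y\cup\{x\}$ is injective. So no repetitions arise, provided the inner algorithm is repetition-free.

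Second, enumerate the extendable sets. Membership is testable in polynomial time: condition~\eqref{item:MRED:extendable:a} is a private-neighbor check, and condition~\eqref{item:MRED:extendable:b} is a domination check. By Lemma~\ref{lemma:MRED_x-extendable_sets}, the family of extendable sets, together with all subsets of size at most one, is an independence system. Hence a standard flashlight/backtracking search lists it with polynomial delay. One orders $N(x)$ and explores prefixes, extending a current set by larger elements only when the result stays in the system. Singletons and the empty set are explored but not used.

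Third, for each extendable $Y$ found, run an incremental-quasi-polynomial red-blue domination algorithm (hypergraph dualization, e.g.~\cite{fredman1996complexity,golovach2016enumerating}). The red side is $N^2(x)\setminus N(Y)$ and the blue side is $N(x)\setminus Y$. Each extendable $Y$ has at least one solution by Lemma~\ref{lem:MRED:extendable_characterization}, so no time is wasted on empty instances.

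The main obstacle is the complexity bookkeeping: showing that the concatenation stays incremental quasi-polynomial. Let $k$ be the number of solutions output so far. The cost to reach the next solution consists of two parts. The first is the polynomial delay to find the next extendable $Y$. The second is the time either to finish the current red-blue instance or to produce its next solution, which is $N^{\mathrm{polylog}(N+k')}$ with $k'\leq k$ solutions of that instance already output. Certifying that the current instance is exhausted also costs quasi-polynomial time in its solution count, which is at most $k$. Since every extendable $Y$ contributes at least one solution, at most one "exhausted" check happens per new output. Summing these costs gives a bound of $\mathrm{poly}(n)\cdot (n+k)^{\mathrm{polylog}(n+k)}$ before the $(k+1)$-th output, which is incremental quasi-polynomial.
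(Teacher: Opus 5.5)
Your proposal is correct and follows the same route as the paper. It lists the extendable sets $Y$ by polynomial-delay enumeration of the independence system from Lemma~\ref{lemma:MRED_x-extendable_sets}, then for each one solves the red-blue domination instance (red $N^2(x)\setminus N(Y)$, blue $N(x)\setminus Y$) with an incremental quasi-polynomial dualization algorithm, the $Y$'s partitioning the solutions. Your explicit accounting fills in the complexity argument the paper leaves implicit: every extendable $Y$ yields at least one solution, so each exhaustion check is charged to an output.
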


\begin{proposition}
Let $G$ be a $(C_3, C_5, C_6, C_8)$-free graph and let $x \in V(G)$.
There is a polynomial-delay algorithm that lists all minimal redundant sets $R$ where $\red(R) = \{x\}$ and $R \cap N(x) = Y$ for some $Y \subseteq N(x)$ and $|Y| \geq 2$. 
\end{proposition}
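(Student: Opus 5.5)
The plan is a two-level enumeration. The outer level lists the extendable sets $Y \subseteq N(x)$ with $|Y|\geq 2$. The inner level, for each such $Y$, lists the minimal red-blue dominating sets $S \subseteq N^2(x)\setminus N(Y)$ of $N(x)\setminus Y$. We output $S \cup Y \cup \{x\}$ for each pair.

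Correctness comes from \Cref{lemma:MRED:charac_1_redundant_multi_neighbor} together with \Cref{lem:MRED:extendable_characterization}. They show that the target sets $R$ with $\red(R)=\{x\}$ and $R\cap N(x)=Y$, $|Y|\ge 2$, are exactly the sets $S\cup Y\cup\{x\}$ with $Y$ extendable and $S$ as above. There are no repetitions: $Y = R \cap N(x)$ is determined by $R$, and $S = R\setminus (Y\cup\{x\})$ is then determined too. Moreover, every extendable $Y$ gives at least one output, by \Cref{lem:MRED:extendable_characterization}, so no outer branch is empty.

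For the outer level, the family $\mathcal{F}$ of extendable sets together with all subsets of size at most one is closed under taking subsets, by \Cref{lemma:MRED_x-extendable_sets}. So $\mathcal{F}$ is an independence system. Membership in $\mathcal{F}$ is testable in polynomial time by checking the two conditions of \Cref{lem:MRED:extendable_characterization}: private neighbors with respect to $Y\cup\{x\}$, and domination of $N(x)\setminus Y$ by $N^2(x)\setminus N(Y)$. We enumerate $\mathcal{F}$ by the standard flashlight/backtracking search. Fix an order on $N(x)$ and grow sets by adding larger-indexed vertices, recursing only when the extended set is in $\mathcal{F}$. Downward closure guarantees every branch leads to members, so the delay is polynomial. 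We skip sets of size at most one.

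For the inner level, the instance is bipartite with the blue vertices in $N(x)\setminus Y$ and the red vertices in $N^2(x)\setminus N(Y)$. By \Cref{lem:MRED:bipartite-neighborhood}, $N(x)$ and $N^2(x)$ are independent, so red and blue lie on distinct sides. The instance is an induced subgraph of $G$, hence $(C_6,C_8)$-free. Also $C_4$ is allowed, so we need only $C_6,C_8$-freeness for the bipartite graph. The algorithm of \cite{kante2018enumerating} therefore lists the sets $S$ with polynomial delay. Interleaving gives the total procedure: the outer enumeration has polynomial delay, each extendable $Y$ yields at least one inner solution, and the inner enumeration has polynomial delay. Hence the overall delay is polynomial.

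The main obstacle is checking that the inner red-blue instance meets the hypotheses of \cite{kante2018enumerating}, namely bipartiteness with red and blue on separate sides and induced-$(C_6,C_8)$-freeness. Bipartiteness and the side condition follow from \Cref{lem:MRED:bipartite-neighborhood}. Induced-$(C_6,C_8)$-freeness follows because induced subgraphs of $G$ inherit it.
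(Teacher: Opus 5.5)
Your proposal is correct and takes the same route as the paper. You enumerate the extendable sets $Y$ as members of an independence system: it is downward closed by \Cref{lemma:MRED_x-extendable_sets} and recognizable in polynomial time via \Cref{lem:MRED:extendable_characterization}. For each such $Y$ you solve the red-blue domination instance with blue vertices $N(x)\setminus Y$ and red vertices $N^2(x)\setminus N(Y)$ using \cite{kante2018enumerating}. The paper leaves implicit the details you check: red and blue lie on opposite sides by \Cref{lem:MRED:bipartite-neighborhood}, the induced instance inherits $(C_6,C_8)$-freeness, and every extendable $Y$ yields at least one output, so the interleaving keeps the delay polynomial.
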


\iflongelse{Combining the above propositions we obtain~\Cref{thm:MIRR:C3C5C6C8} that we restate below.}{Combining the above propositions we obtain~\Cref{thm:MIRR:C3C5C6C8}.}
Let us observe that a solution of the form $N[x]$ might be obtained twice: at preprocessing or in the algorithm used for finding all minimal redundant sets where $x$ is the unique redundant vertex. 
Since the repetition is unique, this, however, can be handled without impact on the delay by simply not outputting the solution the second time it is produced.

\iflongelse{\thmMREDceightfree*}{}

\subsection{Co-bipartite graphs}
\label{subsection:MRED_co-bipartite}

In this section, we prove that, unless $\P = \NP$, there is no output-polynomial time algorithm solving \graphmredenum{} in co-bipartite graphs.
%\graphmredenum{} \sv{does not admit} an output-polynomial time algorithm when restricted to co-bipartite graphs, then $\P = \NP$.
Our reduction relies on earlier results of Boros and Makino~\cite{boros2024generating}, stating that \hypmredenum{} is intractable for hypergraphs of dimension 3.
Nevertheless, the problem can be solved in polynomial time for hypergraphs of maximum degree 3 due to every solution having size at most 4.

We consider a hypergraph $\H$ of dimension 3, whose transposed hypergraph $\H^t$ has maximum degree at most 3.
By looking at the co-bipartite incidence graph $C(\H)$ of $\H$, we prove that listing the members of $\minred(C(\H))$ amounts to enumerating $\minred(\H)$, together with a polynomial number of solutions of size at most 4---including $\minred(\H^t)$ and solutions spread across the parts of $C(\H)$.

\iflongelse{}{This yields the following.}

\begin{theorem} \label{theorem:MRED_conp_complete_cobipartite}
The problem \graphmredenum{} restricted to co-bipartite graphs cannot be solved in output-polynomial time unless $\P = \NP$. 
\end{theorem}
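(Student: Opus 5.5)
We reduce from \hypmredenum{} on hypergraphs of dimension at most three, which is intractable by \Cref{thm:boros-hardness}. Let $\H$ be such a hypergraph on $V=\{v_1,\dots,v_n\}$ with edges $E_1,\dots,E_m$, and consider $C\coloneqq C(\H)$ with cliques $V$ and $U=\{e_1,\dots,e_m\}$. We may assume every vertex lies in some edge and that $\H$ has no empty edges. The goal is the decomposition
\[
\minred(C) = \minred(\H) \cup \mathcal{Y},
\]
where every member of $\mathcal{Y}$ has size at most $4$. Then $|\mathcal{Y}| = O((n+m)^4)$, and $\mathcal{Y}$ can be listed in polynomial time by brute force over small subsets. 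An output-polynomial algorithm for co-bipartite graphs would therefore yield one for \hypmredenum{} on dimension-three hypergraphs: run it on $C$ and output only the solutions that are subsets of $V$ of size at least $5$ that are minimal redundant in $\H$, and list the small solutions of $\H$ directly by brute force. This would imply $\P=\NP$.

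\textbf{Step 1 (sets inside $V$).} Let $I\subseteq V$ with $|I|\ge 2$. A vertex of $V$ is adjacent to all of $I$, so it cannot be a private neighbor of any $x\in I$. Hence the private neighbors of $x\in I$ in $C$ are exactly the $e_j$ with $E_j\cap I=\{x\}$, that is, the private edges of $x$ in $\H$. Thus (ir)redundancy in $C$ and in $\H$ coincide for subsets of $V$ of size at least $2$. Singletons are irredundant in both. It follows that the minimal redundant sets of $C$ contained in $V$ are exactly $\minred(\H)$.

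\textbf{Step 2 (sets inside $U$).} By the symmetric argument, the minimal redundant sets of $C$ contained in $U$ correspond to $\minred(\H^t)$. Since $\H$ has dimension at most $3$, $\H^t$ has maximum degree at most $3$. I expect that every minimal redundant set of $\H^t$ has size at most $4$; the text before the theorem recalls exactly this. The intended argument goes as follows. Take a redundant vertex $x$ of $R\in\minred(\H^t)$. Each of the at most $3$ edges through $x$ must meet $R\setminus\{x\}$, which gives a redundant subset of size at most $4$. So these solutions belong to $\mathcal{Y}$.

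\textbf{Step 3 (mixed sets).} This is the main obstacle. Let $R$ meet both $V$ and $U$, with $a\in R\cap V$ and $b\in R\cap U$. I would show that $R$ already contains a redundant subset of size at most $4$, so that minimality forces $|R|\le 4$. The plan is to consider the private neighbors of $a$ and of $b$: a private neighbor of $a$ in $U$ must avoid $b$'s clique, which is impossible, so it must lie in $V$ and be nonadjacent to $R\cap U$. Symmetrically, a private neighbor of $b$ must lie in $U$. As soon as $R$ has two vertices on one side, say $a,a'\in V$, then $a$ has no private neighbor at all whenever every vertex of $V$ is dominated. Concretely, I would show that any three vertices with at least two on one side, together with one on the other side, form a redundant set, except possibly in degenerate adjacency configurations, which are then handled by adding one more vertex. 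I expect this case analysis to be the delicate part.

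Finally, in each of the three cases the small sets in $\mathcal{Y}$ are listed by brute force over all subsets of size at most $4$, with a polynomial-time test of minimal redundancy, which completes the reduction.
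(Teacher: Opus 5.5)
Your proposal is correct and follows the paper's proof. It uses the same reduction from dimension-three hypergraphs via $C(\H)$, identifies the solutions inside $V$ with $\minred(\H)$ in the same way, and bounds the solutions inside $U$ by $4$ with the same degree-three argument.

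The step you flag as the main obstacle is actually immediate and has no degenerate cases. Since $V$ and $U$ are cliques, $\{a,b\}$ already dominates all of $C$. So for any third vertex $w$, the set $\{a,b,w\}$ is redundant, and minimality forces $|R|\le 3$ for every mixed solution. Your own remark already proves this: a private neighbor of $a$ must lie in $V$, and there the vertex $a'$ is adjacent to everything.
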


\begin{proof}
Let $\H$ be a hypergraph of dimension 3 and consider its co-bipartite incidence graph $C \coloneqq C(\H)$.
Recall that the vertex set of \(C\) is partitioned into sets \(V\) and \(U\) corresponding to the vertices and edges of \(\H\), respectively.
If $e$ is an element of $U$ by $E$ we mean its corresponding hyperedge.
%with vertices $V = \{v_1, \dots, v_n\}$ and edges $\mathcal{E} = \{E_1, \dots, E_m\}$, and consider its cobipartite incidence graph $C \coloneqq C(\H)$.
%For the recall, $C(\H)$ has parts $V$ and $U = \{e_1, \dots, e_m\}$ where each $e_i$ corresponds to $E_i$.
We first relate the minimal redundant sets of $C(\H)$ with the minimal redundant sets of $\H$.
Namely, we show the following equality:
\begin{align*}
\minred(C)  & = \mathcal{X} \cup \mathcal{R} \quad \text{ with } \\ 
\mathcal{X} & = \{R \mid R \in \minred(C),\ |R| \leq 4 \} \\ 
\mathcal{R} & = \{R \mid R \in \minred(\H),\ |R| \geq 5 \}
\end{align*}

We start by proving the inclusion $\minred(C) \subseteq \mathcal{X} \cup \mathcal{R}$.
Let $R \in \minred(C)$. 
Let us further assume that $|R| \geq 5$, as the inclusion trivially holds for smaller $R$.
Note that for any $v \in V$, $e \in U$, $ve$ is dominating $C$.
Therefore, $v, e \in R$ would imply that $|R| \leq 3$.
Hence, either $R \subseteq U$ or $R \subseteq V$.
We prove that $R \subseteq V$.
Given that $|R| > 4$ and $U, V$ are cliques, no vertex of $R$ is self-private.
Now, since $\H^t$ is of degree at most 3, each vertex in $U$ has at most 3 neighbors in $V$.
Hence to make a given $e \in U$ redundant, that is to dominate $N[e]$, at most 3 vertices are needed.
%.Hence, to make a given vertex $e$ of $U$ redundant, at most 3 other vertices of $U$ are needed to dominate $N[e]$.
It follows that a minimal redundant set of $C$ included in $U$ is of size at most 4.
We deduce that $R \subseteq V$ as expected.

Now since $V$ is a clique and $|R| \geq 5$, no vertex $x$ in $R$ gains a private neighbor in $V$ when removing another vertex from $R$.
Consequently $R$ is minimal with the property that not all its elements have private neighbors in $U$.
As $ve$ is an edge of $C$ if and only if $v \in E$ in $\H$, we derive that $R$ is a minimal redundant set of $\H$.

We now turn to the other inclusion.
Let $R \in \minred(\H)$ with $|R| \geq 5$. 
Then $R\subseteq V$.
Again, note that no element of $V$ can be a private neighbor of a vertex in $R$, nor of a vertex in $R\setminus\{x\}$ for any $x\in R$.
As $ve$ is an edge of $C$ if and only if $v \in E$ in $\H$, we derive that $R$ is a minimal redundant set of $C$.

We are ready to conclude.
We show that an output-polynomial time algorithm $\algoA$ for solving \graphmredenum{} in co-bipartite graphs can be used to produce an output-polynomial time algorithm $\algoB$ solving \hypmredenum{} in hypergraph of dimension 3.
This would entail $\P = \NP$ due to \cite{boros2024generating}.
The algorithm $\algoB$ first runs over all subsets of $V$ of size $\leq 4$ and identifies among them the minimal redundant sets of $\H$ with size $\leq 4$.
Then, it builds the graph $C$ in polynomial time and runs the algorithm $\algoA$ on input $C$.
Based on the previous equality, $\algoA$ will indeed produce all remaining minimal redundant sets of $\H$.
Given that $C$ is of polynomial size in the size of $\H$, and $|\minred(C)| \leq |\minred(\H)| + (|V(\H)+|\E(\H)|)^4$, the algorithm $\algoA$ will run in a time polynomially bounded by the sizes of $|\H|$ and $|\minred(\H)|$.
This completes the description of $\algoB$, which also runs in output-polynomial time as expected, which concludes the proof.
\end{proof}

\iflongelse{Note that our reduction preserves the delay as there is only a polynomial number of solutions to discard.}{}

\iflongelse{
    \subsection{Split graphs}
    \label{subsection:MRED_split}
    
    In this section, we show that \graphmredenum{} remains intractable even when restricted to split graphs.
    This result contrasts with \graphmirrenum{}, as the latter can be solved with polynomial delay and space; see Theorem~\ref{thm:MIRR:split}.
    The reduction we use is the split incidence graph $S_2(\H)$ of an arbitrary hypergraph $\H$.
    Recall that $S_2(\H)$ is obtained from $B(\H)$ by completing $V$ into a clique.
    %We give an example in Figure~\ref{fig:split-mred}.
    We then argue that enumerating the minimal redundant sets of $S_2(\H)$ amount to list the minimal redundant sets of $\H$ plus a polynomial number of solutions of size $2$, following similar arguments as in the proof of Theorem~\ref{theorem:MRED_conp_complete_cobipartite}.
    
    % \begin{figure}
    %     \centering
    %     \includegraphics[scale=0.9]{images/split-mred.pdf}
    %     \caption{The reduction used in Theorem~\ref{theorem:MRED_conp_complete_split}. On the left a hypergraph $\H$. On the right the associated incidence split $S_2(\H)$ where $V$ is completed into a clique, illustrated by a grey dotted zone.
    %     Vertices emphasized in bold red form a minimal redundant set of both $\H$ and $S_2(\H)$.}
    %     \label{fig:split-mred}
    % \end{figure}
    
    \begin{theorem} \label{theorem:MRED_conp_complete_split}
    The problem \graphmredenum{} restricted to split graphs cannot be solved in output-polynomial time unless $\P = \NP$.
    \end{theorem}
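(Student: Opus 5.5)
We reduce from \hypmredenum{} on arbitrary hypergraphs, which has no output-polynomial-time algorithm unless $\P=\NP$ by Theorem~\ref{thm:boros-hardness}. Given a hypergraph $\H$ with vertex set $V$ and edge set $\{E_1,\dots,E_m\}$, we build $S \coloneqq S_2(\H)$, where $V$ is a clique, $U=\{e_1,\dots,e_m\}$ is an independent set, and $ve_j\in E(S)$ if and only if $v\in E_j$. We may assume without loss of generality that every vertex of $\H$ lies in some edge; an isolated vertex lies in no redundant set of $\H$ and can be discarded. The central claim will be
\[
\minred(S)=\mathcal{X}\cup\mathcal{R},\quad \mathcal{X}=\{R\in\minred(S)\mid |R|\le 2\},\quad \mathcal{R}=\{R\in\minred(\H)\mid |R|\ge 3\}.
\]

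For the inclusion $\minred(S)\subseteq\mathcal{X}\cup\mathcal{R}$, take $R\in\minred(S)$ with $|R|\ge 3$. We first show that $R\cap U=\emptyset$. Suppose $e\in R\cap U$. Since $e\in E(\H)$ is nonempty, pick $v\in E$. If $v\in R$, then $\{v,e\}$ is redundant, because $N[e]\subseteq N[v]$: every private neighbor of $e$ would be adjacent to $v$. This contradicts minimality since $|R|\ge3$. If $v\notin R$, the argument is less direct. The plan is to use that $R\cap V$, if nonempty, lies in a clique dominating all of $V$, so each element of $R\cap V$ must have a private neighbor in $U$. Then apply Lemma~\ref{lemma:MRED_existence_of_privates} to a redundant vertex. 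If $e$ is redundant, its closed neighborhood $\{e\}\cup E$ must be dominated by $R\setminus\{e\}$. Domination of $e$ itself forces a vertex of $E$ into $R$, which is the previous case. If instead some other vertex is the redundant one, Lemma~\ref{lemma:MRED_existence_of_privates} gives, for each pair, a common private neighbor. Two vertices of $U$ are nonadjacent and have no common neighbor inside $U$, so any such private neighbor lies in $V$. A vertex of $V$ can be private to a vertex of $U$ only if no element of $R\cap V$ exists, since $V$ is a clique. This case analysis should force $|R|\le 2$, or $R\subseteq U$ with every element of $U$ self-private, which is irredundant. I expect this step, excluding elements of $U$ from large minimal redundant sets, to be the main obstacle, and I will handle it by splitting on whether $R\cap V=\emptyset$.

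Once $R\subseteq V$ with $|R|\ge3$, no vertex of $V$ is private for any $x\in R$ or in any $R\setminus\{y\}$, since $V$ is a clique and $|R\setminus\{y\}|\ge2$. So privacy in $S$ is decided only by $U$, where $N[e_j]\cap R=E_j\cap R$. Hence $R$ is minimal redundant in $S$ if and only if it is minimal redundant in $\H$. The same equivalence gives the reverse inclusion $\mathcal{R}\subseteq\minred(S)$, after noting that $R\in\minred(\H)$ with $|R|\ge3$ is a subset of $V$.

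The algorithm then follows the template of Theorem~\ref{theorem:MRED_conp_complete_cobipartite}. First enumerate the members of $\minred(\H)$ of size at most $2$ by brute force. Then build $S$, run the hypothetical algorithm on it, and output only the solutions contained in $V$ of size at least $3$. Since $|\minred(S)|\le|\minred(\H)|+(|V|+m)^2$, this gives an output-polynomial algorithm for \hypmredenum{}, hence $\P=\NP$.
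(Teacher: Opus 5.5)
\paragraph{Review.} Your reduction via $S_2(\H)$, the target equality $\minred(S)=\mathcal{X}\cup\mathcal{R}$, the handling of sets $R\subseteq V$ with $|R|\ge 3$, and the final algorithm and counting all match the paper's proof. The gap is the step you yourself flag as the main obstacle: showing that a minimal redundant set $R$ of $S$ with $|R|\ge 3$ contains no vertex of $U$. You give only a plan (``this case analysis should force\dots''), and its ingredients are partly off. The claim that each element of $R\cap V$ must have a private neighbor in $U$ is wrong as stated, because the redundant vertex has none, and it in fact lies in $V$. Also, the pair to which \Cref{lemma:MRED_existence_of_privates} should be applied is not a pair of $U$-vertices. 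It is the redundant vertex $x\in V$ together with $e\in U$.

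\paragraph{The missing argument.} As you observe, $R$ contains no adjacent pair $v\in V$, $e\in U$, since $\{v,e\}$ is already redundant. Hence every redundant vertex $x$ of $R$ lies in $V$: a redundant vertex of $U$ must be dominated by another element of $R$, which would be a neighbor in $V$. Since $x$ is not self-private and has no $U$-neighbor in $R$, there is some $w\in R\cap V$ with $w\neq x$. So your intended split on $R\cap V$ collapses immediately.

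Now let $e\in R\cap U$. Then $x\notin N(e)$, so $N[e]\cap N[x]=E\subseteq V\subseteq N[w]$. Every vertex of $N[x]$ dominated by $e$ is therefore also dominated by $w$. So $x$ remains redundant in $R\setminus\{e\}$, contradicting minimality; this is the paper's argument.

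In your terms: the common private neighbor $z$ of $x$ and $e$ given by \Cref{lemma:MRED_existence_of_privates} cannot be $e$, since $x\notin N(e)$. So $z\in E\subseteq V$, whence $w\in N[z]$, contradicting $N[z]\cap R=\{x,e\}$.

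With this filled in, the rest of your proof goes through.

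\paragraph{Minor correction.} A vertex $v$ of $\H$ lying in no edge is not in ``no redundant set'': $\{v\}$ is itself a minimal redundant set of $\H$. The equality holds without that assumption, and your brute-force step over sets of size at most $2$ already outputs $\{v\}$, so simply drop that WLOG.
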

    
    \begin{proof}
    Let $\H$ be a hypergraph with vertices $V = \{v_1, \dots, v_n\}$ and edges $\E = \{E_1, \dots, E_m\}$, and consider the split incidence graph $S \coloneqq S_2(\H)$. 
    Recall that its vertex set is partitioned into $V$ and $U$ where $V$ induces a clique and $U = \{e_1, \dots, e_m\}$ induces an independent set where $e_j$ represents $E_j$.
    %Moreover, the set \(V\) induces a clique.
    We first characterize $\minred(S)$ in terms of $\minred(\H)$:
    \begin{align*}
    \minred(S) & = \mathcal{X} \cup \mathcal{R} \quad \text{ with } \\ 
    \mathcal{X} & = \{R \mid R \in \minred(S),\ |R| = 2\} \\ 
    \mathcal{R} & = \{R \mid R \in \minred(\H),\ |R| \geq 3\}
    \end{align*}
    
    We first prove the inclusion $\minred(S) \subseteq \mathcal{X} \cup \mathcal{R}$.
    Let $R \in \minred(S)$. 
    Let us further assume that $|R| \geq 3$, as the inclusion trivially holds for $|R|=2$, and no minimal redundant set has less than two elements.
    Observe that for any $v \in V$ and any $e \in U$, if $v$ is adjacent to $e$ then $ve$ is a minimal redundant set of size 2 as $N[e] \subseteq N[v]$.
    Thus $R$ does not contain any such edge.
    Let $v$ be a redundant vertex of $R$, which, by the previous argument, must thus lie in $V$.
    As $N[v]$ is dominated by $R \setminus \{v\}$, but $R\cap U\cap N(v)=\emptyset$, there must exist $w \in V\cap R$ distinct from $v$ that prevents $v$ from being self-private. 
    Suppose toward a contradiction that there exists $e\in U\cap R$.
    Since $R\cap U\cap N(v)=\emptyset$, $N[e]\cap N[v] \subseteq N[w]\cap N[v]$ and so $N[v]$ remains dominated by $R \setminus \{e\}$, contradicting the minimality of $R$.
    So $R \subseteq V$.
    
    Now since $V$ is a clique and $|R| \geq 3$, no vertex $x$ in $R$ gains a private neighbor in $V$ when removing another vertex from $R$.
    Consequently $R$ is minimal with the property that not all its elements have private neighbors in $U$.
    As $ve$ is an edge of $S$ if and only if $v \in E$ in $\H$, we derive that $R$ is a minimal redundant set of $\H$.
    
    We now turn to the other inclusion.
    Let $R \in \minred(\H)$ with $|R| \geq 3$. 
    Then $R\subseteq V$.
    Again, note that no element of $V$ can be a private neighbor of a vertex in $R$, nor of a vertex in $R\setminus\{x\}$ for any $x\in R$.
    As $ve$ is an edge of $S$ if and only if $v \in E$ in $\H$, we derive that $R$ is a minimal redundant set of $S$.
    
    We are ready to conclude.
    We prove that an output-polynomial time algorithm $\algoA$ for solving \graphmredenum{} in split graphs yields an output-polynomial time algorithm $\algoB$ solving \hypmredenum{}.
    This entails $\P = \NP$ due to \cite{boros2024generating}.
    The algorithm $\algoB$ first identifies the minimal redundant sets of $\H$ of size at most $2$ by ranging over singleton and pairs of vertices of $V$.
    Then, it builds the graph $S$ in polynomial time and runs the algorithm $\algoA$ on input $S$.
    Based on the previous equality, $\algoA$ will indeed produce all remaining minimal redundant sets of $\H$ in time $|S| + |\minred(S)|$.
    Since $|S| + |\minred(S)| \leq |\minred(\H)| + |V|^2$, we obtain that $\algoB$ runs in output-polynomial time as expected, which concludes the proof.
    \end{proof}
}{}

\section{Perspectives}\label{sec:conclusion}

In this paper, we studied the problems \graphmirrenum{} and \graphmredenum{} in graph classes capturing incidence relations such as co-bipartite, split and bipartite graphs.
In particular, we proved the hardness of \graphmredenum{} on split and co-bipartite graphs, while for \graphmirrenum{} we showed that the problem is as hard in general graphs as in co-bipartite graphs.
\Cref{qu:output-poly-for-graphs} remains thus open for \graphmirrenum{}.

Concerning tractability, in addition to the case of \graphmirrenum{} admitting a polynomial-delay algorithm on split graphs, originally claimed by Uno in \cite{bodlaender2015open}, we showed that both problems admit polynomial-delay algorithms on generalizations of chordal bipartite graphs.
The case of bipartite graphs is left open, which leads to the following question:

\begin{question}
What is the complexity of \graphmirrenum{} and \graphmredenum{} in bipartite graphs?
\end{question}

\iflongelse{
    Towards this question, we conclude this paper by giving evidence that the algorithms we proposed do not straightforwardly apply to the bipartite case.
    First, we prove that the sequential method we applied to solve \graphmirrenum{} in strongly orderable graphs cannot be used on bipartite graphs.

    \begin{figure}
        \centering
        \includegraphics[scale=0.9]{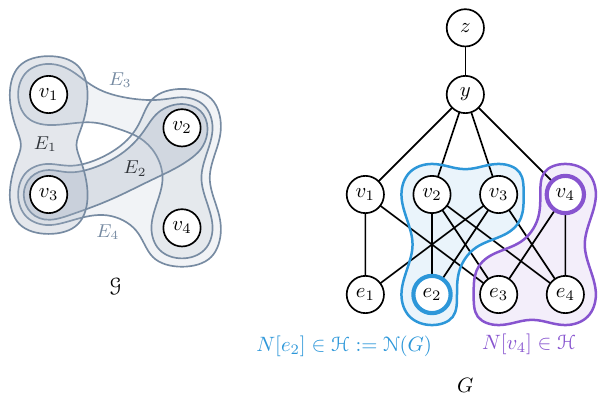}
        \caption{The reduction of Theorem~\ref{theorem:MIRR_npcomplete_irp_trace_bipartite}. We start from a hypergraph $\G$ (on the left), build its bipartite incidence graph $B(\G)$ to which we add a vertex $y$ universal to $V$ and a vertex $z$ pending at $y$. The resulting graph $G$ is pictured on the right. Then, we consider the closed neighborhood hypergraph $\H \coloneqq \N(G)$. 
        Two of its edges, $N[e_2]$ and $N[v_4]$, are drawn on $G$.}
        \label{fig:bip-mirr-orderedgen}
    \end{figure}
}{
    Towards this question, we observe that the sequential method we applied to solve \graphmirrenum{} in strongly orderable graphs cannot be used straightforwardly in the bipartite case. Namely, we prove the following.
}

\begin{theorem}
    \label{theorem:MIRR_npcomplete_irp_trace_bipartite}
    Unless $\P = \NP$, there is no output-polynomial time algorithm which, given a bipartite graph $G$, an ordering $v_1,\dots,v_n$ of its vertices, an integer \(i \in [n-1]\), and \(I^{\star} \in \maxirr(\H_{i})\) where $\H=\N(G)$, enumerates \(\children(I^{\star}, i)\).
    % If there is an output-polynomial time algorithm which, given a bipartite graph $G$, an ordering $v_1,\dots,v_n$ of its vertices, an integer \(i \in [n-1]\), and \(I^{\star} \in \maxirr(\H_{i})\) where $\H=\N(G)$, enumerates \(\children(I^{\star}, i)\), then \(\P = \NP\).
    
	\begin{proof}
		Let \(\G\) be a instance of \hypmirrenum{} in hypergraphs, for which the existence of an output-polynomial time algorithm implies \(\P = \NP\)~\cite[Theorem 2]{boros2024generating}.
        Let us assume without loss of generality that every vertex of $\G$ appears in at least one hyperedge, that $\G$ does not contain the empty hyperedge, and let us denote by $N$ and $M$ the number of vertices and edges of $\G$, respectively.
        Let us finally assume that $\G$ is not a trivial instance in which $V(\G)$ is irredundant. 

        Now, let $G$ be the bipartite incidence graph $B(\G)$ of $\G$ extended with a vertex \(y\) adjacent to every vertex in \(V=V(\G)\), and a vertex \(z\) adjacent only to \(y\).
        %We construct the incidence bipartite graph \(G\) with partitions \((X, \mathcal{E})\) representing the vertices and hyperedges of \(\G\), respectively, add a vertex \(y\) adjacent to every vertex in \(X\), and a vertex \(z\) adjacent only to \(y\). \odtodo{use the notation $B(\G)$ for the bip of incidence}
        Lastly, we consider the ordering \(e_{1}, \dots, e_{M}, v_{1}, \dots, v_{N}, y, z\) of the vertices of \(G\), and set \(\H \coloneqq \N(G)\), $I^{\star} \coloneqq V$, and $i \coloneqq N+M$.
        This concludes the construction of the instance.
        It is illustrated on an example in Figure~\ref{fig:bip-mirr-orderedgen}.
        
        The set \(V\) is a maximal irredundant set of \(\H_{i}\), because every vertex in \(V\) is self-private and every other vertex in $V_i$ is dominated. 
        Notice that by our assumptions \(V \cup \{y\}\) is redundant, and that \(y\) has a private hyperedge in \(\H_{i+1}\), namely the trace $\{y\}$ of $N[z]$.
        Since the vertices in \(V\) cannot be self-private, the extensions of \(V\) are of the form \(Y \cup \{y\}\) where $Y$ is a maximal subset of $V$ having private neighbors in $\E$. 
        Note that the children of $I^{\star}=V$ with respect to $i$ are precisely the family of all such extensions.
        Moreover, notice that such sets \(Y\) are the maximal irredundant sets of \(\G\); otherwise, we can add another vertex from \(V \setminus Y\) into \(Y\), contradicting its maximality.
        As the construction can be conducted in polynomial time, we get the desired result.
	\end{proof}
\end{theorem}

As for \graphmredenum{}, the strategy we used in graphs without small cycles relied on two crucial facts: (1) a minimal redundant set has a constant number of redundant vertices (at most $3$), and (2) the minimal redundant sets containing a single redundant vertex and a unique neighbor of this vertex can be enumerated efficiently.
In bipartite graphs these two facts do not hold anymore\iflongelse{, as we show now.}{.}

For redundant sets with unbounded number of redundant vertices, let us consider the following bipartite graph $G$.
We start from vertices $v_1, \dots, v_n$ and create, for each $i \neq j$ a new vertex $u_{ij}$ adjacent to $v_i$ and $v_j$.
Then, we add a vertex $y$ adjacent to each $v_i$ and a vertex $z$ adjacent to $y$.
%\jctodo{Cannot we remove $z$ from the construction? I believe we will have $\red(R) = R$ in this case.}\odtodo{I agree but Simon had in mind something like "let's make this example not fall in the easy characterizations of redundants vertices being a single neighborhood". I guess this remark should be added or $z$ removed.}
This complete the description of $G$.
The set $R \coloneqq \{y, v_1, \dots, v_n\}$ is a minimal redundant set of $G$ where $\red(R) = \{v_1, \dots, v_n\}$ is indeed of unbounded size.
\iflongelse{We illustrate this construction in Figure~\ref{fig:bip-multiple-red}.

\begin{figure}[t]
    \centering
    \includegraphics[scale=0.8]{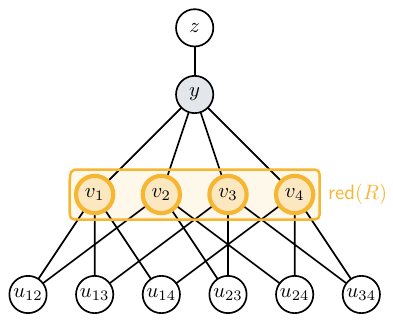}
    \caption{A bipartite graph with a minimal redundant set (shaded vertices) having an unbounded number of redundant vertices (boxed in yellow). The vertex $z$ shows that a minimal redundant set with an unbounded number of redundant vertices does not need to be the closed-neighborhood of a vertex.}
    \label{fig:bip-multiple-red}
\end{figure}
}{}

For the task of enumerating the minimal redundant sets with a single redundant vertex $x$ and containing a unique neighbor $y$ of $x$, i.e., the case of Lemma~\ref{lemma:MRED:charac_1_redundant_1_neighbor}, we show in the next theorem that it  becomes intractable in bipartite graphs.
The hardness comes from the fact that, when selecting a subset of $N(y)$ to be part of the redundant set, one must guarantee that each chosen vertex has a private neighbor outside $N^2(x)$ all the while being useful to dominate $N(x) \setminus \{y\}$, a problem that does not arise in bipartite graphs with no induced $C_6$.

\begin{theorem}
    \label{thm:MRED:method_limitation}
    Let $G$ be a bipartite graph and $x, y \in V(G)$ be a pair of adjacent vertices.
    Then, unless $\P = \NP$, there is no output-polynomial time algorithm to enumerate the minimal redundant sets $R$ of $G$ that satisfy $\red(R) = \{x\}$ and $R \cap N(x) = \{y\}$. 
\end{theorem}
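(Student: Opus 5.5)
The plan is to show that even deciding whether \emph{at least one} minimal redundant set $R$ with $\red(R)=\{x\}$ and $R\cap N(x)=\{y\}$ exists is \NP-hard on bipartite graphs. Hardness of enumeration then follows by the usual argument. Given an output-polynomial algorithm with running time bounded by a polynomial $p(|G|+|\text{output}|)$, run it for $p(|G|)$ steps. If it has not output anything by then, the family is empty; otherwise it is not. Either way we decide existence in polynomial time. The reduction is from \textsc{3-SAT} (any CNF-SAT works).

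\textbf{Construction.} Take a formula with variables $u_1,\dots,u_n$ and clauses $C_1,\dots,C_m$, and build $G$ as follows.
\begin{itemize}
\item Add $x$, adjacent to $y$, to a vertex $b_i$ for each variable, and to a vertex $c_j$ for each clause.
\item Add a vertex $p$ adjacent only to $y$.
\item For each variable add two literal vertices $z_i,\bar z_i$, both adjacent to $y$ and to $b_i$. Make $c_j$ adjacent to the literal vertices of the literals occurring in $C_j$.
\item For each variable add a vertex $w_i$ adjacent exactly to $z_i$ and $\bar z_i$.
\end{itemize}
The bipartition $\{x,p\}\cup\{z_i,\bar z_i\}_i$ versus $\{y\}\cup\{b_i\}_i\cup\{c_j\}_j\cup\{w_i\}_i$ shows that $G$ is bipartite. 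The instance is $(G,x,y)$.

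\textbf{Correctness.} By Proposition~\ref{prop:MRED:maximum_distance}, any sought $R$ satisfies $R\subseteq N^2[x]$. So $R=\{x,y\}\cup S$ with $S\subseteq N^2(x)=\{p\}\cup\{z_i,\bar z_i\}_i$.

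For the forward direction, first note that $x$ is redundant exactly when $S$ dominates $B:=\{b_i\}_i\cup\{c_j\}_j$, since $y$ already dominates $x$ and $y$. Next, $R$ is minimal exactly when $S$ minimally dominates $B$. Removing $y$ makes $x$ self-private, and $y$ remains necessary because no vertex of $S$ is adjacent to $x$. In particular $p\notin S$, since $p$ dominates nothing in $B$. Since each $b_i$ must be dominated, $S$ contains at least one of $z_i,\bar z_i$. If $S$ contained both, then $w_i$, the only neighbor of $z_i,\bar z_i$ outside $N[x]$, would not be private. The other neighbors of $z_i$ are $y,b_i$ and clause vertices, all adjacent to $x\in R$, and $z_i$ itself is adjacent to $y$. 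Hence $z_i$ would be redundant, contradicting $\red(R)=\{x\}$. Therefore $S$ picks exactly one literal per variable, which is an assignment. Domination of every $c_j$ says this assignment satisfies the formula.

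For the converse, take a satisfying assignment and its set $T$ of true literal vertices. $T$ dominates $B$. Choose a minimal dominating subset $S\subseteq T$; it still hits every $b_i$, so $S=T$ in fact, but minimality is all we need. Set $R=\{x,y\}\cup S$. Then:
\begin{itemize}
\item $x$ is redundant, and $R$ is minimal by the argument above.
\item $y$ has private neighbor $p$, as $p$ is adjacent only to $y$.
\item Each chosen $z$ has private neighbor $w_i$, because the complementary literal is absent.
\end{itemize}
So $\red(R)=\{x\}$ and $R\cap N(x)=\{y\}$.

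\textbf{Main obstacle.} The delicate step is verifying minimality and the exact set of redundant vertices directly. Lemma~\ref{lemma:MRED:charac_1_redundant_1_neighbor} is unavailable here, because $G$ contains induced $C_6$'s such as $z_i\,b_i\,\bar z_i\,w_i$ together with $y$. The points to check carefully are the following.
\begin{itemize}
\item No literal vertex in $S$ can use a vertex of $N(x)$, or itself, as a private neighbor, since all of these are dominated by $x$ or $y$.
\item Removing $y$ really destroys redundancy of $x$.
\item No vertex outside $N^2[x]$ can enter a minimal $R$. This is where Proposition~\ref{prop:MRED:maximum_distance} is used.
\end{itemize}
The gadget $w_i$ is what forces consistency. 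It is exactly the ``private neighbor outside $N^2(x)$'' phenomenon that $C_6$-freeness excluded in the tractable case.
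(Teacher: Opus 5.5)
Your proof is correct and is essentially the paper's reduction from 3-SAT. Both use the literal--clause incidence graph with $x$ complete to the clauses, $y$ complete to the literals plus a pendant neighbor, and a vertex $w_i$ on $z_i,\bar z_i$ as the only possible private neighbor of a literal, which enforces consistency; your extra vertices $b_i$ only force complete rather than minimal partial assignments, making minimality of $R$ automatic. Two small fixes: in the wrap-up, declare the family empty only if the algorithm \emph{halts} within $p(|G|)$ steps without output (for a nonempty family it may still be running silently at that point), and in your closing remark $\{z_i,b_i,\bar z_i,w_i,y\}$ induces $K_{2,3}$ rather than a $C_6$ (an induced $C_6$ is, e.g., $x\,c_j\,z_i\,w_i\,\bar z_i\,c_k$ for suitable clauses), though this plays no role in the argument.
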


\begin{proof}
Let \(\phi\) be an instance of \(3\)-SAT with literals \(X = \{x_{1}, \overline{x}_{1}, \dots, x_{n}, \overline{x}_{n}\}\) and clauses \(C \coloneqq \{c_{1}, \dots c_{m}\}\).
We construct the incidence bipartite graph \(G\) of \(\phi\) with partition \((X, C)\).
Moreover, we add two adjacent vertices \(x\) and \(y\), with $x$ complete to $C$, $y$ complete to $X$, 
% that are made complete respectively to \(C\) and \(X\), 
and a vertex \(u\) whose only neighbor is \(y\).
Finally, for each \(i \in [n]\), we create a new vertex \(y_{i}\) adjacent to both \(x_{i}\) and \(\overline{x}_{i}\).
This finishes our construction of $G$.
An illustration of the construction is depicted in~\Cref{figure:MRED_bipartite_hardness_construction}.

We first prove that there exists a minimal redundant set $R$ of $G$ such that $\red(R) = \{x\}$ and $R \cap N(y) = \{x\}$ if and only if $\phi$ has a satisfying truth assignment.
We start with the if part.
Let $A$ be the set of literals set to true in a minimal partial satisfying assignment of $\phi$ and let $R := A \cup \{x, y\}$.
We show that $R$ satisfies the aforementioned requirements.
We readily have that $R \cap N(x) = \{y\}$ by construction of $G$.
Since $A$ is a partial satisfying assignment of $\phi$, each vertex in $C$ is dominated by a vertex of $A$.
Therefore, $R$ indeed dominates $N[x]$, making $x$ redundant.
In $R$, $y$ has private neighbor $u$ and any literal $\ell_i$ has private neighbor $y_i$ as $R \subseteq X \cup \{x\}$ and $A$ cannot contain both $x_i, \bar{x_i}$ for any $i \in [n]$ by assumption.
We thus obtain that $\red(R) = \{x\}$.
To see that $R$ is minimal, observe that discarding $y$ would make $x$ self-private and that discarding any literal of $A$ would give a private neighbor $c_j$ to $x$ by assumption on the minimality of $A$ for satisfying $\phi$.
As all other vertices of $R$ are irredundant, we deduce that $R$ is indeed minimally redundant.

\begin{figure}[ht!]
    \centering
    \includegraphics[scale=0.8]{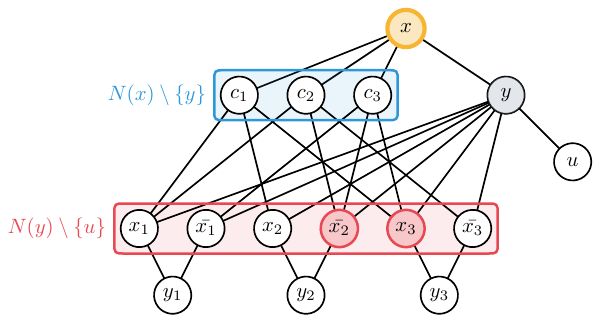}
    \caption{
    The reduction of~\Cref{thm:MRED:method_limitation}.
    Blue zones indicate the vertices representing clauses of \(\phi\), while the red zones represent the literals.
    The shaded vertices form a minimal redundant set where the (yellow) bold vertex is redundant.
    Those in \(N(y) \setminus \{u\}\) correspond to a partial satisfying assignment of $\phi$.
    }
    \label{figure:MRED_bipartite_hardness_construction}
\end{figure}

We turn to the only if part.
Let $R$ be a minimal redundant set of $G$ such that $\red(R) = \{x\}$ and $R \cap N(x) = \{y\}$.
Let $A := R \setminus \{x, y\}$.
By assumption, $N(x) \setminus \{y\}$ is minimally dominated by $A$, and by construction it must thus be that $A \subseteq X$.
As $\red(R) = \{x\}$, $X \subseteq N(y)$, and $N(\ell_i) \setminus \{y_i\} \subseteq N(x)$ for any literal $\ell_i$, any vertex $\ell_i$ must have $y_i$ as private neighbor.
We deduce that at most of one $x_i, \bar{x_i}$ belongs to $A$ for all $i \in [n]$.
This makes $A$ correspond to a partial assignment of the variables $x_1, \dots, x_n$ where the literals in $A$ are set to true.
Since $N(x) \setminus \{y\}$ is dominated by $A$, we deduce that each clause in $\phi$ contains a literal in $A$.
Thus, $A$ corresponds to a partial satisfying assignment of $\phi$ as desired.

Let us now call $\mathcal{R}$ the set of minimal redundant sets $R$ of $G$ that satisfy $\red(R) = \{x\}$ and $R \cap N(x) = \{y\}$ and assume that there exists an output-polynomial time algorithm $\algoA$ listing the members of $\mathcal{R}$ in time $(|G| + |\mathcal{R}|)^c$ for some constant $c$.
We show that $\algoA$ can be used to decide whether $\phi$ is satisfiable in polynomial time.
We build $G$ in time polynomial in the size of $\phi$ then run $\algoA$ with input $G$ for $(|G|+1)^c$ time. 
By previous discussion, if $\algoA$ does not stop within this time bound or stops and outputs a solution, then $\mathcal{R}$ is non-empty and $\phi$ is satisfiable.
If $\algoA$ stops without outputting any solution, then $\phi$ is not satisfiable.
This concludes the proof as the whole procedure runs in polynomial time.
\end{proof}

%Something about: (a) technique fails for MRED; (b) unbounded number of vertices for bipartite; (c) bipartite + one side with bounded degree seems poly delay; (d) ordered generation fails for MIRR; (e) some open problems?

\bibliographystyle{alpha}
\bibliography{main} % unused biblio to remove after

\end{document}